\documentclass[12pt,leqno,a4paper]{amsart}
\usepackage{amssymb,enumerate,cite}
\usepackage{amsmath}
\usepackage{txfonts}
\usepackage{mathrsfs}
\usepackage{hyperref}
\usepackage[all]{xy}

\overfullrule 5pt

\textwidth160mm
\oddsidemargin5mm
\evensidemargin5mm

\newcommand{\F}{\mathbb{F}}
\newcommand{\OF}{\overline{\F}}
\newcommand{\ZZ}{\mathbb{Z}}

\newcommand{\bG}{\mathbf{G}}
\newcommand{\bH}{\mathbf{H}}
\newcommand{\bM}{\mathbf{M}}
\newcommand{\bL}{\mathbf{L}}
\newcommand{\bP}{\mathbf{P}}
\newcommand{\bT}{\mathbf{T}}
\newcommand{\bV}{\mathbf{V}}
\newcommand{\bc}{\mathbf{c}}

\newcommand{\cB}{\mathcal{B}}
\newcommand{\cE}{\mathcal{E}}

\newcommand{\cO}{\mathcal{O}}
\newcommand{\cN}{\mathcal{N}}
\newcommand{\cR}{\mathcal{R}}
\newcommand{\cS}{\mathcal{S}}

\newcommand{\fS}{\mathfrak{S}}
\newcommand{\fJ}{\mathfrak{J}}
\newcommand{\fe}{\mathfrak{e}}
\newcommand{\fC}{\mathfrak{C}}
\newcommand{\fb}{\mathfrak{b}}

\newcommand{\scF}{\mathscr{F}}
\newcommand{\sC}{\mathscr{C}}

\newcommand{\ts}{\tilde{s}}
\newcommand{\tz}{\tilde{z}}
\newcommand{\tv}{\tilde{v}}
\newcommand{\tw}{\tilde{w}}

\newcommand{\tpsi}{\tilde{\psi}}
\newcommand{\ttheta}{\tilde{\theta}}
\newcommand{\tchi}{\tilde{\chi}}
\newcommand{\tdelta}{\tilde{\de}}
\newcommand{\tzeta}{\tilde{\zeta}}
\newcommand{\tvtheta}{\tilde{\vartheta}}
\newcommand{\tvphi}{\tilde{\vph}}

\newcommand{\htz}{\hat{\tilde{z}}}
\newcommand{\hxi}{\hat{\xi}}
\newcommand{\hZ}{\widehat{Z}}

\newcommand{\wtB}{\widetilde{B}}
\newcommand{\wtG}{\widetilde{G}}

\newcommand{\wtL}{\widetilde{L}}
\newcommand{\wtT}{\widetilde{T}}
\newcommand{\wtC}{\widetilde{C}}
\newcommand{\wtW}{\widetilde{W}}
\newcommand{\wtR}{\widetilde{R}}
\newcommand{\wtN}{\widetilde{N}}
\newcommand{\wtD}{\widetilde{D}}
\newcommand{\wtZ}{\widetilde{Z}}
\newcommand{\wtM}{\widetilde{M}}
\newcommand{\wtV}{\widetilde{V}}
\newcommand{\wcB}{\widetilde{\cB}}
\newcommand{\wtbG}{\widetilde{\mathbf{G}}}
\newcommand{\wtbL}{\widetilde{\mathbf{L}}}
\newcommand{\wtbT}{\widetilde{\mathbf{T}}}
\newcommand{\wtsC}{\widetilde{\mathscr{C}}}
\newcommand{\wtcR}{\widetilde{\mathcal{R}}}
\newcommand{\wtDelta}{\widetilde{\Delta}}
\newcommand{\wtOm}{\widetilde{\Omega}}

\newcommand{\Aut}{\operatorname{Aut}\nolimits}
\newcommand{\Alp}{\operatorname{Alp}\nolimits}
\newcommand{\IBr}{\operatorname{IBr}\nolimits}
\newcommand{\Ind}{\operatorname{Ind}}
\newcommand{\Lin}{\operatorname{Lin}}
\newcommand{\Irr}{\operatorname{Irr}\nolimits}
\newcommand{\Out}{\operatorname{Out}\nolimits}
\newcommand{\Res}{\operatorname{Res}}
\newcommand{\GL}{\operatorname{GL}}
\newcommand{\GU}{\operatorname{GU}}
\newcommand{\Sp}{\operatorname{Sp}}
\newcommand{\Spin}{\operatorname{Spin}}
\newcommand{\GO}{\operatorname{GO}}
\newcommand{\SO}{\operatorname{SO}}
\newcommand{\CSp}{\operatorname{CSp}}
\newcommand{\CSO}{\operatorname{CSO}}
\newcommand{\PSp}{\operatorname{PSp}}
\newcommand{\dz}{\operatorname{dz}}
\newcommand{\diag}{\operatorname{diag}}
\newcommand{\id}{\operatorname{Id}}
\newcommand{\J}{\operatorname{J}}
\newcommand{\Rad}{\operatorname{Rad}}
\newcommand{\St}{\operatorname{St}}
\newcommand{\rk}{\operatorname{rk}}
\newcommand{\Ker}{\operatorname{Ker}}
\newcommand{\C}{\operatorname{C}}
\newcommand{\N}{\operatorname{N}}
\newcommand{\Z}{\operatorname{Z}}

\newcommand{\lc}[1]{{^{#1}\!}} 
\newcommand{\grp}[1]{\langle#1\rangle}
\newcommand{\Grp}[1]{\left\langle#1\right\rangle}
\newcommand{\set}[1]{\{\,#1\,\}}
\newcommand{\Set}[1]{\left\{\,#1\,\right\}}

\let\alp=\alpha
\let\ga=\gamma
\let\veps=\varepsilon
\let\la=\lambda
\let\vph=\varphi
\let\Ga=\Gamma
\let\ka=\kappa
\let\de=\delta
\let\ti=\times

\theoremstyle{theorem}
\newtheorem{mainthm}{Theorem}
\newtheorem{thm}{Theorem}[section]
\newtheorem{lem}[thm]{Lemma}
\newtheorem{amp}[thm]{Assumption}
\newtheorem{prop}[thm]{Proposition}
\newtheorem{cor}[thm]{Corollary}

\theoremstyle{definition}

\newtheorem{rmk}[thm]{Remark}

\newtheorem*{rem}{Remark}
\numberwithin{equation}{section}

\begin{document}
\raggedbottom

\title[Inductive blockwise Alperin weight condition for type $\mathsf B$]{Inductive blockwise Alperin weight condition\\ for type $\mathsf B$ and odd primes}

\author{Zhicheng Feng}
\address{School of Mathematics and Physics, University of Science and Technology Beijing, Beijing 100083, China.}
\email{zfeng@pku.edu.cn}

\author{Conghui Li}
\address{School of Mathematics, Southwest Jiaotong University, Chengdu 611756, China.}
\email{liconghui@swjtu.edu.cn}

\author{Jiping Zhang}
\address{School of Mathematical Sciences, Peking University, Beijing 100871, China.}
\email{jzhang@pku.edu.cn}

\thanks{Supported by NSFC~(11631001, 11901028 and 11901478).}

\begin{abstract}
	By the reduction theorems of Navarro--Tiep  and Sp\"ath, a way to prove the
Alperin weight conjecture and its blockwise version is to verify the co-called inductive Alperin
weight condition and inductive blockwise Alperin weight condition for all finie
simple groups respectively.
In this paper,
we establish the inductive blockwise Alperin weight condition for simple groups of type $\mathsf B$ and odd primes,
using a criterion given by  Brough and  Sp\"ath recently.
\end{abstract}

\keywords{blocks, weights, special Clifford groups, Spin groups, Alperin weight conjecture}

\subjclass[2010]{20C20, 20C33}

\date{\today}

\maketitle


\section{Introduction}

Alperin weight conjecture is one of the most important, basic problems in the representation theory of finite groups.
For a finite group $G$ and a prime $\ell$,
we denote by $\dz(G)$ the set of irreducible characters of $G$ of $\ell$-defect zero. 
An $\ell$-weight of $G$ is a pair $(R,\vph)$,
where $R$ is an $\ell$-subgroup of $G$ and $\vph\in\dz(\N_G(R)/R)$.
We view $\vph$ as a character of $\N_G(R)$.
The group $G$ acts by conjugation on the set of its weights. 
Each weight may be assigned to a unique block.
Let $B$ be an $\ell$-block of $G$.
Then a weight $(R,\vph)$ is a $B$-weight if $b^G=B$,
where $b$ is the block of $\N_G(R)$ containing $\vph$ and $b^G$ is the induced block.
We denote by $\Alp(B)$ the set of $G$-conjugacy classes of $B$-weights.
Then the famous Alperin weight conjecture (cf. \cite{Al87}) now reads as follows:
$$|\IBr(B)|=|\Alp(B)|~\text{for for every block $B$ of every finite group $G$}.$$

The classification of finite simple groups seems to provide a way for proving local-global conjectures in representation theory of finite groups.
A reduction of the non-blockwise version of Alperin weight conjecture had been achieved successfully to simple groups by Navarro and Tiep \cite{NT11} in 2011,
and soon after, Sp\"ath reduced the blockwise Alperin weight conjecture to the verification of the co-called \emph{inductive blockwise Alperin weight (BAW) condition} for finite non-abelian simple groups.
In several cases the inductive BAW condition has been verified:
alternating simple groups, Suzuki and Ree groups by Malle \cite{Ma14},  
groups of Lie type in defining characteristic by Sp\"ath \cite{Sp13}, 
groups of certain Lie types in non-defining characteristic by several authors \cite{AHL21,FLZ20a,FLZ20b,FLZ19,FM20,Li19,Li21,Ma14,Sch16}.

Recently, Brough and Sp\"ath \cite{BS20} established a technical result,
which is useful for the verification of the inductive BAW condition for simple groups of Lie type with abelian outer automorphism groups,
especially when the outer automorphism groups are non-cyclic; see Theorem \ref{thm:criterion}.
This makes it possible to consider the simple groups of Lie type $\mathsf B$ and $\mathsf C$ and $\mathsf E_7$.
In this paper, we focus on the groups of type $\mathsf B$ and odd non-defining characteristic $\ell$,
and prove the following:

\begin{mainthm}\label{main-thm}
Let $S=\Omega_{2n+1}(q)$ (with $n\ge 2$) be a finite simple group, where $q$ is a power of some odd prime $p$ and $\ell$ be an odd prime different from $p$.
Assume that unitriangularity of the decomposition matrices at the prime $\ell$ is known
(in the sense of Assumption \ref{uni-assumption}).
Then the inductive blockwise Alperin weight condition holds for $S$ and $\ell$.
\end{mainthm}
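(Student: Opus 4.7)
The plan is to apply the Brough--Sp\"ath criterion stated in Theorem~\ref{thm:criterion}. Let $\bG$ be a simple algebraic group of simply connected type $\mathsf{B}_n$ over $\OF_p$ and let $F$ be a Frobenius endomorphism with $G:=\bG^F=\Spin_{2n+1}(q)$; since $n\ge 2$ and $q$ is odd, $G$ is the universal covering group of $S$ with $\Z(G)$ of order $2$. To access the unique non-trivial diagonal automorphism of $S$, I work inside a regular embedding $\bG\hookrightarrow\wtbG$, where $\wtG=\wtbG^F$ is the corresponding special Clifford group, so that $\wtG/\Z(\wtG)\cong\SO_{2n+1}(q)$ realizes the diagonal outer class. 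Combined with standard field automorphisms $F_0$ (type $\mathsf{B}_n$ has no non-trivial graph automorphism), this yields an abelian group $\wtG\rtimes\grp{F_0}$ inducing all of $\Aut(S)$. The criterion then demands (i) an $\wtG\rtimes\grp{F_0}$-equivariant $\ell$-basic set $\cB$ of $G$, (ii) an equivariant parametrization of the $\ell$-weights of $G$, and (iii) a block-preserving, central-character-compatible bijection between the two.

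For (i), I recall the block distribution obtained from Bonnaf\'e--Dat--Rouquier Jordan decomposition together with Cabanes--Enguehard theory. After reduction along this Morita equivalence, each $\ell$-block of $\wtG$ is governed by an $\ell$-quasi-isolated semisimple class together with a collection of unipotent blocks of smaller Levi subgroups, whose modular decomposition numbers are assumed unitriangular in the sense of Assumption~\ref{uni-assumption}. This produces a canonical $\Aut(S)$-stable basic set $\cB$ inside $\Irr(G)$ whose Jordan labels are permuted naturally by $F_0$ and absorbed into $\wtG$-conjugation, giving the required equivariance.

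For (ii), the radical $\ell$-subgroups of $G$ and $\wtG$ are classified via Alperin--Fong basic subgroups: up to conjugation, every relevant radical decomposes as a central product indexed by components corresponding to $\GL_a(q^e)$- and $\GU_a(q^e)$-type factors embedded in smaller spin or orthogonal groups, with normalizers realized as wreath products. Taking defect-zero characters of the quotients and twisting by the semisimple label furnishes a parametrization of $\Alp(\wtG)$ by the same combinatorial data (a semisimple class plus a tuple of partitions and $\ell$-cores) indexing $\cB$. The natural matching yields a $\wtG\rtimes\grp{F_0}$-equivariant, block-preserving bijection $\Omega\colon\cB\to\Alp(\wtG)$, and hypothesis (ii) of the criterion reduces to checking that the lifts of weights to $\wtG\rtimes\grp{F_0}$ are compatible with the Clifford theory determined by the labels.

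The main obstacle, and the bulk of the work, is condition (iii): the bijection must descend to $G=\Spin_{2n+1}(q)$ with matched central characters on $\Z(G)$, i.e., paired Brauer characters and weight characters must agree on the non-trivial element of $\Z(G)$. This requires delicate bookkeeping of the spin versus non-spin behaviour along each basic subgroup factor. For odd $\ell$, the diagonal factors coming from $\GL$/$\GU$ pieces are split by the embedding into $\wtG$, so the central character is detected by the residual spin factor, and the analysis reduces to a parity statement involving its rank together with the partitions labelling the attached unipotent pieces. I expect this to be the hardest step, since the Clifford-theoretic subtleties of $\Spin_{2n+1}(q)$ do not mirror those of the linear and unitary analogues already handled in the literature. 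Once checked, the three hypotheses of Theorem~\ref{thm:criterion} are in place and the inductive BAW condition for $S$ follows.
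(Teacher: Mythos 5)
Your overall strategy --- apply the Brough--Sp\"ath criterion to $G=\Spin_{2n+1}(q)$ inside the regular embedding into the special Clifford group $\wtG$, with $D=\grp{F_p}$ supplying the field automorphisms --- is the same as the paper's. However, there are several concrete gaps.

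First, you implicitly assume that $\Spin_{2n+1}(q)$ is always the universal $\ell'$-covering group of $\Omega_{2n+1}(q)$. This fails for $S=\Omega_7(3)$, which has an exceptional Schur multiplier; the criterion must be applied to the genuine covering group, and the paper deals with this case separately (noting that for $\ell\in\{5,7,13\}$ the Sylow $\ell$-subgroups are cyclic, so the Koshitani--Sp\"ath result \cite{KS16} applies). Your argument as written cannot cover $\Omega_7(3)$.

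Second, your description of the hypotheses of Theorem~\ref{thm:criterion} is inaccurate, and this affects what actually needs proving. The criterion does not ask for a ``central-character-compatible bijection descending to $G$''. Rather, beyond the equivariant blockwise bijection at the $\wtG$-level (condition~(2), which the paper settles in Proposition~\ref{equiv-bijection}), one needs condition~(3): for every $\tpsi\in\IBr(\wcB)$ there is some $\psi_0\in\IBr(G\mid\tpsi)$ with $(\wtG\rtimes D)_{\psi_0}=\wtG_{\psi_0}\rtimes D_{\psi_0}$; and condition~(4): in every $\wtG$-orbit on $\Alp(B)$ there is a weight $(R,\vph_0)$ with $(\wtG D)_{R,\vph_0}=\wtG_{R,\vph_0}(GD)_{R,\vph_0}$. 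These are stabilizer-splitting statements, not parity statements about central characters. The paper establishes~(3) via the Cabanes--Sp\"ath theorem (Theorem~\ref{act-spin}) transferred to Brauer characters under Assumption~\ref{uni-assumption} (Corollary~\ref{split-stabilizer}); this is a deep input that your ``parity of rank plus partitions'' heuristic does not replace. Condition~(4) is established separately (Proposition~\ref{local-condition}) via a central-product reduction modeled on the type~$\mathsf A$ argument, working with twisted radical subgroups whose basic factors live in explicitly constructed subgroups of $\wtG$. You would need both of these, and your outline currently addresses neither at the required level of precision.

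Third, you invoke Bonnaf\'e--Dat--Rouquier Jordan decomposition as the mechanism behind the block distribution. The paper instead uses the Cabanes--Enguehard and Kessar--Malle classification of $\ell$-blocks via $d$-Jordan-cuspidal pairs, together with Enguehard's parametrization; Bonnaf\'e--Dat--Rouquier is not used. Using a Morita-theoretic Jordan decomposition would require additional care about compatibility with the weight parametrization (especially its equivariance), which you do not spell out and which the paper sidesteps.

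Finally, the actual combinatorial heart of the paper --- the construction of the twisted radical subgroups and their canonical characters for $\wtG$ in Sections~\ref{sec:SO} and~\ref{sec:weights}, needed because the map $\pi:D(V)\to\GO(V)$ is not surjective in odd dimension and because the multiplier $\xi$ of the semisimple label is generally a non-square --- is not visible in your sketch. That machinery is essential to making the parametrization of $\Alp(\wtG)$ explicit and equivariant, and without it there is no route to verifying conditions~(2)--(4).
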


We note that the simple groups $\Omega_{2n+1}(2^m)\cong \PSp_{2n}(2^m)$ are treated in \cite{Li21}.
Let $S=\Omega_{2n+1}(q)$ (with odd $q$) and $G=\Spin_{2n+1}(q)$.
To verify the inductive BAW conditions for simple group $S$,
we always need to determine the action of automorphisms on the irreducible Brauer characters and weights of $G$ and its regular embedding $\wtG$ (the corresponding finite special Clifford group).

When considering the action of automorphisms on irreducible Brauer characters,
one approach is to transfer to unitriangular basic sets.
As in Assumption \ref{uni-assumption},
we assume that the decomposition matrices of $\wtG$ and $G$ with respect to the basic sets $\cE(\wtG,\ell')$ and $\cE(G,\ell')$ are unitriangular respectively.
It is worth mentioning that such assumption was proven to be true for unipotent blocks by Brunat, Dudas and Taylor \cite{BDT20} in 2020.

We classify the weights of finite special Clifford groups in odd dimension and odd non-defining characteristic.
The actions of automorphisms on weights can be determined,
and then an equivariant bijection between irreducible Brauer characters and weights for $\wtG$ follows.
By restriction, we also get a classification of weights of spin groups.
Combined with the consideration of blocks later,
the blockwise Alperin weight conjecture holds for both $\wtG$ and $G$.
Finally, the local requirement in the inductive BAW condition can be established by method used for type $\mathsf A$ in \cite{FLZ20a}.
 
We should consider the blocks of $\wtG$.
The $\ell$-blocks of finite reductive groups were classified by Cabanes and Enguehard \cite{CE99} whenever $\ell$ is good and this result was generalised by Kessar and Malle \cite{KM15} to its largest possible generality.
We make use of this classification and describe a combinatorial parameterization for the blocks of $\wtG$.
In addition, we also determine the number of blocks of $G$ covered by a given block of $\wtG$.

This paper is structured as follows.
In Section \ref{sec:Preli},
we introduce some notation and the criterion of Brough and Sp\"ath,
and give a brief summary of the representation theory of finite reductive groups and some basic facts concerning special Clifford groups and spin groups.
Then we review some of the standard facts on irreducible characters of $\wtG$ and consider the characters of $G$ in Section \ref{sec:irr-cl}.
After recall the results for $\SO_{2n+1}(q)$ by J. An in Section \ref{sec:SO},
we classify the weights of $\wtG$ and $G$ and consider the actions of automorphisms in Section \ref{sec:weights}.
Section \ref{sec:blocks} is devoted to the study of the blocks of $\wtG$ and $G$.
In Section \ref{proof-main},
we collect all the information together to obtain an equivariant bijection,
and verify the last piece of the inductive condition, namely the local property,
and finally our main Theorem \ref{main-thm} follows.

\section{Preliminaries}   \label{sec:Preli}


\subsection{General notation}

Let $G$ be a finite group. 
We denote the restriction and induction by $\Res$ and $\Ind$ respectively.
For $N\unlhd G$ we sometimes identify the characters of $G/N$ with the characters of $G$ whose kernel contains $N$.

The cardinality of a set, or the order of a finite group, $X$, is denoted by $|X|$.
If a group $A$ acts on a finite set $X$,
we denote by $A_x$ the stabilizer of $x\in X$ in $A$,
and analogously we denote by $A_{X'}$ the setwise stabilizer of $X'\subseteq X$.

Let $\ell$ be a prime.
If $A$ acts on a finite group $G$ by automorphisms,
then there is a natural action of $A$ on $\Irr(G)\cup\IBr_\ell(G)$ given by $(\lc{a}\chi)(g)=(\chi^{a^{-1}})(g)=\chi(g^a)$ for every $g\in G$, $a\in A$ and $\chi\in\Irr(G)\cup\IBr_\ell(G)$.
For $P\le G$ and $\chi\in\Irr(G)\cup\IBr_\ell(G)$,
we denote by 
$A_{P,\chi}$ denotes the stabilizer of $\chi$ in $A_P$.
Sometimes, $A_\chi$  is also denoted by  $A(\chi)$.
A subgroup $R \le G$ is called \emph{$\ell$-radical} if $R=\cO_\ell(\N_G(R))$.
We also say that $R$ is an \emph{$\ell$-radical subgroup} of $G$.

Let $G$ be abelian.
Then all irreducible characters of $G$ are linear characters and we also write $\Lin(G)$ for $\Irr(G)$.
In addition, $\Lin_{\ell'}(G)$ denotes the subset of $\Lin(G)$ consisting of the elements of $\ell'$-order.

A \emph{Brauer pair} of $G$ (with respect to $\ell$) is a pair $(Q,b)$ such that $Q$ is an $\ell$-subgroup of $G$ and $b$ is an $\ell$-block of $\C_G(Q)$.
Let $B$ be an $\ell$-block of $G$. If $B$ has central defect then we define the \emph{canonical character} of $B$ as the only element of $\Irr(B)$ with $\cO_\ell(\Z(G))$ in its kernel.
A Brauer pair $(Q,b)$ is called a \emph{$B$-Brauer pair} if $\mathrm{Br}_Q(B)b=b$,
where $\mathrm{Br}_Q$ denotes the Brauer homomorphism (see e.g. \cite[\S 11]{Th95}).
A $B$-subpair $(Q, b)$ of $G$ is called \emph{self-centralizing} if $b$ has defect group $\Z(Q)$ in $\C_G(Q)$.
For the basic facts about Brauer pairs,
we refer the reader to \cite[\S40 and \S41]{Th95}.

\subsection{Central products}\label{subsec:cen-prod}

In order to consider the local property (\emph{i.e.}, Condition \ref{thm:criterion} (4)), we should deal with central products.
We recall the following facts from \cite[Lemma 5.1]{IMN07}.

Let $X$ be arbitrary finite group, and let $U_1, \ldots, U_t$ (with $t\ge 2$) be subgroups of $X$ such that $X$ is the central product of $U_1, \ldots, U_t$.
That is,  $X=U_1\cdots U_t$,
$[U_i,U_j]=1$ for $i\ne j$, $Z=\bigcap_{i=1}^{t}U_i$ and $(\prod_{i\ne j}U_j)\cap U_i=Z$ for any $i$.
Let $Y=U_1\ti\cdots\ti U_t$ be the direct  product of $U_1, \ldots, U_t$.

Notice in this situation that $Z\subseteq Z(X)$.
Let $\hZ=\{ (z_1,\ldots,z_t)\mid z_i\in Z, \ z_1\cdots z_t=1  \}$.
Then $\hZ\subseteq Z(Y)$.
Let $\Phi:Y\to X$ be the map defined by $(u_1,\ldots,u_t)\to u_1\cdots u_t$.
It can be checked directly that $\Phi$ is a  group homomorphism which is surjective.
Obviously $\hZ\subseteq \Ker(\Phi)$.
Comparing the orders of these groups, we know that $\hZ=\Ker(\Phi)$, \emph{i.e.}, $X\cong Y/\hZ$.
In this way, we have an embedding $\Irr(X)\hookrightarrow \Irr(Y)$ given by identifying the character of $\Irr(X)$ with their lifts to $Y$.

\subsection{Inductive BAW conditions}

We use the criterion for the inductive BAW condition given by Brough and Sp\"ath \cite[Thm. 4.5]{BS20}.
As we will consider groups of type $\mathsf B$,
we also make some simplification to the assumptions: cyclic quotient $\wtG/G$ and cyclic $D$.

\begin{thm}[{\cite[Thm. 4.5]{BS20}}]   \label{thm:criterion}
	Let $S$ be a finite non-abelian simple group and $\ell$ a prime dividing $|S|$.
	Let $G$ be the full $\ell'$-covering group of $S$,
	$B$ an $\ell$-block of $G$ and assume there are groups $\wtG$, $D$
	such that $G \unlhd \wtG \rtimes D$ and the following hold.
	\begin{enumerate}[\rm(1)]\setlength{\itemsep}{0pt}
	\item\begin{enumerate}[\rm(i)]\setlength{\itemsep}{0pt}
		\item $G=[\wtG,\wtG]$,
		\item $\C_{\wtG D}(G)=\Z(\wtG)$ and $\wtG D/\Z(\wtG) \cong \Aut(G)$,
		\item $\Out(G)$ is abelian,
		\item both $\wtG/G$ and $D$ are cyclic.
		\end{enumerate}
	\item Let $\wcB$ be the union of $\ell$-blocks of $\wtG$ covering $B$.
		There exists a $\Lin_{\ell' }(\wtG/G) \rtimes D_{\wcB}$-equivariant
		bijection $\wtOm_{\wcB}: \IBr(\wcB) \to \Alp(\wcB)$ such that
		\begin{enumerate}[\rm(i)]\setlength{\itemsep}{0pt}
		\item $\wtOm_{\wcB}(\IBr(\tilde B)) = \Alp(\tilde B)$ for every $\tilde B \in \wcB$, and 
		\item $\J_G(\tpsi) = \J_G(\wtOm_{\wcB}(\tpsi))$ for every $\tpsi \in \IBr({\wcB})$.
		\end{enumerate}
	\item For every $\tpsi\in\IBr(\wcB)$,
		there exists some $\psi_0\in\IBr(G\mid\tpsi)$ such that
		$(\wtG\rtimes D)_{\psi_0}=\wtG_{\psi_0}\rtimes D_{\psi_0}$,
	\item In every $\wtG$-orbit on $\Alp(B)$,
		there is a $(R,\vph_0)$ such that
		$(\wtG D)_{R,\vph_0} = \wtG_{R,\vph_0} (GD)_{R,\vph_0}$.
	\end{enumerate}
	Then the inductive BAW condition holds for the block $B$.
\end{thm}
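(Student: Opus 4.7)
The plan is to derive the inductive BAW condition for $B$ from the four listed ingredients by pushing the $\wtG$-level bijection $\wtOm_{\wcB}$ down to a bijection $\Omega_B\colon \IBr(B) \to \Alp(B)$ via Clifford theory for the cyclic quotient $\wtG/G$, and then using (3) and (4) to supply the extension-compatibility data that appears in Sp\"ath's definition of the inductive BAW condition.

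First, I would use that for $G \unlhd \wtG$ with $\wtG/G$ cyclic, every $\psi \in \IBr(B)$ appears as a constituent of $\Res^{\wtG}_G\tpsi$ for some $\tpsi \in \IBr(\wcB)$, and the $\Lin_{\ell'}(\wtG/G)$-orbits on $\IBr(\wcB)$ are in natural bijection with the $\wtG$-orbits on $\IBr(B)$. The analogous statement for weights, which follows from the classical Dade--Okuyama--Navarro theory of weight covering, identifies $\Lin_{\ell'}(\wtG/G)$-orbits on $\Alp(\wcB)$ with $\wtG$-orbits on $\Alp(B)$. Hence the equivariance property in (2) forces $\wtOm_{\wcB}$ to descend to a bijection between these orbit spaces, and the stabilizer condition $\J_G(\tpsi)=\J_G(\wtOm_{\wcB}(\tpsi))$ from (2)(ii) guarantees that once orbit representatives are selected on both sides, the induced map $\Omega_B\colon \IBr(B)\to\Alp(B)$ is a genuine bijection which moreover restricts to each individual block $B$ (not merely to $\wcB$).

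Next, I would promote $D$-equivariance of $\wtOm_{\wcB}$ to $\Aut(G)_B$-equivariance of $\Omega_B$. By (1)(ii) we have $\Aut(G)\cong \wtG D/\Z(\wtG)$, so an automorphism of $G$ fixing $B$ is represented by the product of an inner-from-$\wtG$ part and a part from $D$. The inner-from-$\wtG$ part acts on the fibres of $\IBr(\wcB)\to\IBr(B)$ and on those of $\Alp(\wcB)\to\Alp(B)$ through twists by $\Lin_{\ell'}(\wtG/G)$, and the corresponding compatibility for $\Omega_B$ follows by standard Gallagher-type arguments from the $\Lin_{\ell'}(\wtG/G)$-equivariance of $\wtOm_{\wcB}$. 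The $D$-part is controlled directly by (2), given that $D$ normalises $\wcB$ and therefore acts on both sides of the bijection.

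Finally, I would produce the extension data required by the inductive BAW condition. Conditions (3) and (4) supply representatives $\psi_0\in\IBr(G\mid\tpsi)$ and weights $(R,\vph_0)$ whose stabilizers in $\wtG\rtimes D$ split as $\wtG_{\psi_0}\rtimes D_{\psi_0}$ and $\wtG_{R,\vph_0}(GD)_{R,\vph_0}$ respectively. Because both $\wtG/G$ and $D$ are cyclic by (1)(iv), there is no obstruction to extending $\psi_0$ from $G$ first to $\wtG_{\psi_0}$ using the chosen $\tpsi$ and then to $\wtG_{\psi_0}\rtimes D_{\psi_0}$; the weight side is handled symmetrically by extending $\vph_0$ through $\N_{\wtG}(R)_{\vph_0}$ and then through the cyclic $D$-part. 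The central-product description of Subsection~2.2 then lets one read off canonical extensions on the two sides and verify that they transport through $\Omega_B$; combined with (1)(i) this yields the full package of Sp\"ath's conditions for the block $B$.

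The main obstacle will be this last step. The subtle point is that one must produce a single coherent choice of extension on the character side that not only controls the action of $\wtG_{\psi_0}\rtimes D_{\psi_0}$ but also corresponds under $\Omega_B$ to a compatible canonical extension on the weight side. It is precisely the cyclicity assumption in (1)(iv) that makes this feasible, since it reduces the extension problem to a single generator at a time and avoids the $H^2$-obstructions that would appear for non-cyclic quotients; the description of $X$ as a quotient of $Y=U_1\ti\cdots\ti U_t$ by $\hZ$ in Subsection~2.2 is what allows one to compare these canonical extensions across the two sides of the bijection.
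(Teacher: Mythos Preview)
The paper does not contain a proof of this theorem: it is quoted verbatim from Brough--Sp\"ath \cite[Thm.~4.5]{BS20} and used as a black-box criterion. There is therefore no ``paper's own proof'' to compare your proposal against; the paper's contribution is to \emph{verify} hypotheses (1)--(4) for $S=\Omega_{2n+1}(q)$, not to reprove the criterion itself.

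That said, your sketch is in the right spirit for how such criteria are typically established: descend the equivariant bijection from $\wtG$ to $G$ via Clifford theory for the cyclic quotient, and use the stabilizer-splitting hypotheses (3) and (4) together with cyclicity (1)(iv) to trivialize the extension obstructions on both sides. One caution: the inductive BAW condition as formulated by Sp\"ath requires more than producing a bijection and extensions separately --- it demands a compatibility of block-isomorphism data (encoded in the relations $\sim_b$ and $\sim_c$ of \cite{Sp13,BS20}) between the character and weight sides, and your final paragraph glosses over how the canonical extensions on the two sides are actually matched through $\Omega_B$. This is the technical heart of \cite{BS20}, and the central-product lemma you invoke from \S2.2 is used in the present paper only for the \emph{verification} of condition (4) (see Proposition~\ref{local-condition}), not for the proof of the criterion. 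If you want to reconstruct the proof, you should consult \cite{BS20} directly.
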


\begin{rmk}\label{condition-J}
For the definitions of $\J_G(\tpsi)$ and $\J_G(\wtOm_{\wcB}(\tpsi))$,
see \cite[\S 2]{BS20}.
In this paper, we consider the case when $S$ is a simple group of type $\mathsf B$ and  $\ell$ is odd,
then one has $\ell\nmid |\wtG/G\Z(\wtG)|$ and the condition (ii) of Theorem \ref{thm:criterion} (2) always holds.
\end{rmk}

\subsection{Background of the representations of finite reductive groups}\label{subsec:pre,lie}

We refer to \cite{GM20} for the notation pertaining to the character theory of finite reductive groups.
Let $\F_q$ be the field of $q$ elements, where $q$ is a power of a prime $p$.
Denote by $\OF_q$ the algebraic closure of the field $\F_q$.

\subsubsection{}
Algebraic groups are usually denoted by boldface letters.
Suppose that $\bG$ is a connected reductive algebraic group over $\OF_q$ and $F:\bG\to\bG$ is a Frobenius endomorphism endowing $\bG$ with an $\mathbb F_q$-structure.
The group of rational points $\bG^F$ is finite.
Let $\bG^*$ be Langlands dual to $\bG$ with corresponding Frobenius endomorphism also denoted $F$.

Let $s$ be a semisimple element of ${\bG^*}^F$.
We denote by $\cE(\bG^F,s)$ the corresponding Lusztig series.
By Lusztig's Jordan decomposition (see e.g. \cite[\S 2.6]{GM20}),
there is a bijection between $\cE(\C_{\bG^*}(s)^F,1)$ and $\cE(\bG^F,s)$
which is denoted as $\fJ^{\bG}_{s}$ when $\Z(\bG)$ is connected.

\subsubsection{}
Let $\ell$ be a prime number different from $p$.
If $s$ is a semisimple $\ell'$-element of ${\bG^*}^F$,
then we denote by $\cE_{\ell}(\bG^F,s)$ the union of the Lusztig series $\cE(\bG^F,st)$,
where $t$ runs through semisimple $\ell$-elements of ${\bG^*}^F$ commuting with $s$.
By a theorem of Brou\'e--Michel \cite[Thm.~9.12]{CE04},
$\cE_{\ell}(\bG^F,s)$ is a union of $\ell$-blocks of $\bG^F$.

Also, $\cE(\bG^F,\ell')$ denotes the set of the union of  Lusztig series $\cE(\bG^F,s)$,
where $s$ runs through the semisimple $\ell'$-element of ${\bG^*}^F$.
Considering the elements of $\cE(\bG^F,\ell')$ as a basic set,
we have the following result due to Geck--Hiss \cite[Thm.~14.4]{CE04}.

\begin{thm}\label{basicset}
	Let $\ell$ be a good prime for $\bG$ and different from $p$.
	Assume that $\ell$ does not divide $(Z(\bG)/Z^\circ (\bG))_F$
	(the largest quotient of $Z(\bG)$ on which $F$ acts trivially).
	Let $s\in{\bG^*}^F$ be a semisimple $\ell'$-element.
	Then $\cE(\bG^F,s)$ forms a basic set of $\cE_\ell(\bG^F,s)$.
\end{thm}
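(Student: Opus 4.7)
The plan is to reduce the statement to the unipotent case ($s=1$) via Jordan decomposition, and then to invoke the classical Geck--Hiss theorem in that unipotent case. By the Brou\'e--Michel theorem cited just above, $\cE_\ell(\bG^F,s)$ is a union of $\ell$-blocks, so asking for a basic set makes sense; concretely one needs to show that the square submatrix of the decomposition matrix obtained by restricting the characters in $\cE(\bG^F,s)$ to $\ell$-regular classes is invertible over $\ZZ$.

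For the reduction I would first pass through a regular embedding $\bG\hookrightarrow\wtbG$ with $\Z(\wtbG)$ connected. The hypothesis $\ell\nmid|(\Z(\bG)/\Z^\circ(\bG))_F|$ ensures that $\wtbG^F/\bG^F\Z(\wtbG)^F$ is an $\ell'$-group, which lets Clifford theory descend any basic set property proved on $\wtbG^F$ back to $\bG^F$. On the dual side, the preimage of $s$ has connected centralizer in $\wtbG^*$, so Lusztig's Jordan decomposition $\fJ^{\wtbG}_s$ canonically identifies $\cE(\wtbG^F,s)$ with the unipotent series $\cE(\C_{\wtbG^*}(s)^F,1)$. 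A Bonnaf\'e--Rouquier type Morita equivalence between the block idempotent cut out by $s$ in $\wtbG^F$ and the sum of unipotent $\ell$-blocks of a Levi $\bL^F$ dual to $\C_{\wtbG^*}(s)$ then transports $\cE(\wtbG^F,s)$ bijectively onto $\cE(\bL^F,1)$ and matches the relevant decomposition matrices, reducing the problem to the unipotent case for $\bL$.

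In the unipotent case I would apply the original Geck--Hiss argument: using projectivity of the Gelfand--Graev character at good primes together with Curtis--Alvis duality and twisting by the Steinberg character, any projective indecomposable module in a unipotent $\ell$-block is an integral linear combination of unipotent ordinary characters, yielding the required unitriangularity (up to reordering) of the unipotent decomposition matrix. The main obstacle is ensuring the hypotheses transfer correctly to the Levi $\bL$: goodness of $\ell$ is inherited by $\bL^*$ as a Levi of $\bG^*$, but the condition on $(\Z(\bG)/\Z^\circ(\bG))_F$ must be traced carefully through duality, and the availability of the Bonnaf\'e--Rouquier equivalence in the precise generality required may force an appeal to its Bonnaf\'e--Dat--Rouquier extension when $\C_{\bG^*}(s)$ is not a Levi subgroup of $\bG^*$.
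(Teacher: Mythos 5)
The paper does not prove this theorem at all: Theorem~\ref{basicset} is stated with the attribution ``due to Geck--Hiss \cite[Thm.~14.4]{CE04}'' and is simply quoted as a known result. There is therefore no internal proof to compare against, and the relevant benchmark is the Geck--Hiss argument as presented in \cite{CE04}. That argument is \emph{direct}: one shows that the Mellin transforms of Gelfand--Graev characters (which are virtual projective characters, since $\ell \neq p$) decompose with integer coefficients along Lusztig series, and combined with Curtis--Alvis duality and a counting argument ($|\cE(\bG^F,s)|$ equals the number of irreducible Brauer characters in $\cE_\ell(\bG^F,s)$) this yields invertibility of the relevant square submatrix of the decomposition matrix. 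No Jordan-decomposition reduction appears; the argument treats arbitrary semisimple $\ell'$-elements $s$ uniformly.

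Your proposed route --- regular embedding, then Bonnaf\'e--(Dat--)Rouquier to reduce to the unipotent case, then Geck--Hiss for unipotent blocks --- is a genuinely different strategy, and it has a real gap. The Bonnaf\'e--Rouquier (or Bonnaf\'e--Dat--Rouquier) Morita equivalence transports $\cE_\ell(\wtbG^F,s)$ to $\cE_\ell(\wtbL^F,s)$ only when $\C^\circ_{\wtbG^*}(s)$ is contained in some proper $F$-stable Levi $\wtbL^*$ of $\wtbG^*$. When $s$ is quasi-isolated (i.e.\ $\C^\circ_{\wtbG^*}(s)$ is not contained in any proper Levi), there is no such reduction and you are \emph{not} placed in a unipotent situation for a smaller group --- you remain in the series $\cE(\wtbG^F,s)$ with $s\neq 1$. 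Quasi-isolated semisimple $\ell'$-elements exist in the symplectic/conformal-symplectic duals that appear in this paper, so this is not an edge case you can ignore. To close the gap you would need a separate argument for the quasi-isolated base case; but that argument is essentially the Geck--Hiss argument itself, which works for all $s$ without reduction. So even where your reduction applies, it does not simplify the base case you would ultimately need to prove. The secondary concerns you raise (transfer of the hypothesis on $(\Z(\bG)/\Z^\circ(\bG))_F$ across the regular embedding and duality, and the exact generality of the Morita equivalence) are legitimate, but the quasi-isolated gap is the one that breaks the proposed proof as written.
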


In this paper any algebraic group $\bG$ involved is of classical type and the prime $\ell$ is always odd.
Thus the hypothesis of Theorem \ref{basicset} is always satisfied.

\subsubsection{}
Let $d$ be a positive integer.
We will use Sylow $d$-theory and $d$-Harish-Chandra theory
(see for instance \cite[\S25]{MT11} and \cite[\S3.5]{GM20}).
For an $F$-stable torus $\bT$ of $\bG$,
we denote by $\bT_d$ its Sylow $d$-torus.
Then an $F$-stable Levi subgroup $\bL$ of $\bG$ is called \emph{$d$-split}
if $\bL=\C_{\bG}(Z^\circ (\bL)_d)$.
A character $\zeta\in\Irr (\bL^F)$ is called \emph{$d$-cuspidal}
if $\lc{*}R^{\bL}_{\bM\subseteq \bP}(\zeta)=0$
for any proper $d$-split Levi subgroups $\bM$ of $\bL$
and any parabolic subgroup $\bP$ of $\bL$ containing $\bM$ as Levi complement.

Let $s\in {\bG^*}^F$ be semisimple.
Following \cite[Def.~2.1]{KM15},
we say $\chi\in\cE(\bG^F,s)$ is \emph{$d$-Jordan-cuspidal} if
\begin{itemize}
	\item $\Z^\circ(\C^\circ_{\bG^*}(s))_d=\Z^\circ(\bG^*)_d$, and
	\item $\chi$ corresponds under Jordan decomposition to
		the $\C_{\bG^*}(s)^F$-orbit of a $d$-cuspidal unipotent character of $\C^\circ_{\bG^*}(s)^F$.
\end{itemize}
If $\bL$ is a $d$-split Levi subgroup of $\bG$ and $\zeta\in\Irr(\bL^F)$ is $d$-Jordan-cuspidal,
then $(\bL,\zeta)$ is called a \emph{$d$-Jordan-cuspidal pair} of $(\bG, F)$.

By \cite{BM11,Ta18}, the Mackey formula holds for groups of classical type.
Thus the Lusztig induction $R_{\bL\subseteq \bP}^{\bG}$ is independent of the ambient parabolic subgroup $\bP$ in this paper.
So throughout we always omit the parabolic subgroups when considering Lusztig inductions.	

\subsubsection{}\label{subsec:class-blocks}
Now let $\ell$ be an odd prime number different from $p$
and let 
\begin{equation}\label{equ:def-d-ql}
d=d(q,\ell) \ \text{be the multiplicative order of $q$ modulo $\ell$}.
\addtocounter{thm}{1}\tag{\thethm}
\end{equation}
Cabanes and Enguehard \cite{CE99} gave a label for an arbitrary $\ell$-block of finite groups of Lie type for $\ell\ge 7$ and such result was generalised by Kessar and Malle \cite{KM15} to its largest possible generality.
If $(\bL,\zeta)$ is a $d$-Jordan-cuspidal pair of $\bG$ with $\zeta\in\cE(\bL^F,\ell')$,
then there exists a unique $\ell$-block $b_{\bG^F}(\bL,\zeta)$ of $\bG^F$ such that all irreducible constituents of $R_{\bL}^{\bG}(\zeta)$ lie in $b_{\bG^F}(\bL,\zeta)$.
Under the conditions of \cite[Thm. A (e)]{KM15},
the set of $\bG^F$-conjugacy classes of $d$-Jordan-cuspidal pairs $(\bL,\zeta)$ of $\bG$ with $\zeta\in\cE(\bL^F,\ell')$ form a labeling set for the $\ell$-blocks of $\bG^F$.
Note that the assumption there always holds if every component of $\bG$ is of classical type and $\ell$ is odd.

By \cite[Rmk.~2.2]{KM15},
$d$-Jordan-cuspidality and $d$-cuspidality agree for characters lying in an $\ell'$-series
if $\ell$ is an odd prime and every component of $\bG$ is of classical type.

\subsection{}\label{subsect:Clifford}
We recall some basic definitions and properties of Clifford algebras and Clifford groups.

Let $V$ be an orthogonal space over the field $k=\F_q$ or $\OF_q$
with the quadratic form $Q$ and bilinear form $B(-,-)$ such that
\[ B(u,v)=Q(u+v)-Q(u)-Q(v),\quad u,v\in V. \]
Thus $B(u,u)=2Q(u)$ for any $u \in V$ and the quadratic form $Q$ can be recovered from the bilinear form $B(-,-)$ since $q$ is odd.
So for any element $u$ of $V$,
$u$ is isotropic (\emph{i.e.} $B(u,u)=0$) if and only if $u$ is singular (\emph{i.e.} $Q(u)=0$).

The Clifford algebra $C(V)$ on $V$ is defined to be the $k$-algebra with identity $\fe$
generated by the elements of $V$ modulo the relations
\[ v^2=Q(v)\fe,\quad v \in V. \]
Thus an element $v \in V$ is a unit in $C(V)$ if and only if $v$ is non-isotropic,
in which case, $v^{-1}=Q(v)^{-1}v$.
We also have
\[ uv+vu=B(u,v),\quad u,v\in V; \]
in particular, when $u,v$ are orthogonal, \emph{i.e.} $B(u,v)=0$, we have $uv=-vu$.
Given an orthogonal basis $\veps_1,\veps_2,\dots,\veps_m$ of $V$,
$C(V)$ has a basis as a $k$-linear space
\[ \veps_1^{e_1}\veps_2^{e_2}\cdots\veps_m^{e_m},\quad e_i\in\set{0,1}. \]
The Clifford algebra $C(V)$ has a $\ZZ_2$-gradation structure
\[ C(V) = C_0(V)\oplus C_1(V), \]
and the multiplication of two homogenious elements $a,b$ in $C(V)$ satisfies
\[ ab=(-1)^{\deg{a}\deg{b}}ba. \]
The elements in $C_0(V)$ and $C_1(V)$ are called even and odd respectively,
and $C_0(V)$ is called the special Clifford algebra over $V$.
If there is an orthogonal decomposition
\[ V = V_1 \perp V_2, \]
then for any $a \in C_0(V_1)$ and $b \in C(V_2)$, we have $ab=ba$.

The Clifford group and special Clifford group over $V$ is defined as
\begin{align*}
D(V) &= \set{g \in C(V)^\ti \mid gVg^{-1}=V}, \\
D_0(V) &= \set{g \in C_0(V)^\ti \mid gVg^{-1}=V}.
\end{align*}
For any non-isotropic element $x$ of $V$,
the conjugation of $x$ induces the orthogonal transformation $-\varrho_x$ on $V$,
where $\varrho_x$ is the orthogonal reflection defined by $x$, \emph{i.e.}
\[ \varrho_x:\quad V \to V,\quad v \mapsto v-\frac{B(x,v)}{Q(x)}x. \]
This induces a group homomorphism $\pi: D(V) \to \GO(V)$,
which is surjective if and only if $\dim_kV$ is even by \cite[II.3.1]{Chev}.
The restriction of $\pi$ induces an exact sequence
\[ 1 \to k^\ti\fe \xrightarrow{} D_0(V) \xrightarrow{\pi} \SO(V) \to 1. \]
When $\dim_kV$ is odd, $Z(D_0(V)) = k^\ti\fe$.

There is an anti-automorphism $*$ of $C(V)$ stabilizing $V$ pointwise.
Then $N: D_0(V) \to k^\ti,\ g \mapsto gg^*$ is a group homomorphism,
and $\Spin(V)$ is defined to be $\Ker N$.

Assume $V$ is of plus type and of odd dimension $2n+1$.
Choose a basis of $V$
\begin{equation}\label{equ-basis}
\veps_1,\dots,\veps_n,\veps_0,\eta_1,\dots,\eta_n
\addtocounter{thm}{1}\tag{\thethm}
\end{equation}
such that for $v=x_0\veps_0+x_1\veps_1+\cdots+x_n\veps_n+y_1\eta_1+\cdots+y_n\eta_n$,
we have \[ Q(v) = x_0^2+x_1y_1+\cdots+x_ny_n. \]
Thus the metric matrix of the bilinear form $B(-,-)$ under the basis (\ref{equ-basis}) is
\[ \begin{bmatrix} 0&0&I_n\\ 0&2&0\\ I_n&0&0 \end{bmatrix}. \]
By \cite[II.2.6]{Chev}, there is an odd element $z_V$ in $Z(C(V))$ such that $z_V^2=1$
($z_V$ is the product of some scalar in $k$ with the product of all vectors in an orthogonal basis).
Note that $z_V \in D(V)$ but $z_V \notin D_0(V)$.
Furthermore, by \cite[II.3.4]{Chev},
$D_0(V)$ is generated by the products $vz_V$,
where $v$ runs over all non-isotropic elements of $V$;
note that $(v_1z_V)(v_2z_V)=v_1v_2$ for any two non-isotropic elements $v_1,v_2 \in V$.

\subsection{}\label{subsect:Weyl-D0}
Assume $V$ is an orthogonal space over $k=\F_q$ or $\OF_q$ of plus type and of odd dimension $2n+1$.
We now recall the Weyl groups of $\SO(V)$, $D_0(V)$ and $\Spin(V)$,
which are all isomorphic to $\ZZ_2 \wr \fS(n)$,
where $\fS(n)$ denotes the symmetric group of $n$ symbols.
Assume a basis (\ref{equ-basis}) of $V$ is fixed,
then $\SO(V)$ can be identified with the matrix group $\SO_{2n+1}(q)$.

Note that $\veps_i-\eta_i$ is a non-isotropic element in $V$
and $\varrho_{\veps_i-\eta_i}$ denotes the corresponding orthogonal reflection.
Set
\begin{align*}
W_0 &= \grp{ -\varrho_{\veps_i-\eta_i} \mid i=1,2,\dots,n }, \\
W_1 &= \Set{ \diag\{P_\sigma,1,P_\sigma\} \mid \sigma\in\fS(n) },
\end{align*}
where $P_\sigma$ is the permutation matrix in $\GL_n(k)$ representing $\sigma\in\fS(n)$.
Then the Weyl group of $\SO(V)$ can be identified with $W_0 \rtimes W_1$.
We will often identify $W_1$ with $\fS(n)$.
Denote by $w_{ij}$ the transposition $(i,j)$ and set $w_i=-\varrho_{\veps_i-\eta_i}$.

Set $\tw_i = z_V(\veps_i-\eta_i)$ and
\[ \tw_{ij}=\frac{1}{2}(\veps_i-\veps_j+\eta_i-\eta_j)(\veps_i-\veps_j-\eta_i+\eta_j). \]
Then $w_i=\pi(\tw_i)$, $\tw_i^2=-\fe$ and $w_{ij} = \pi(\tw_{ij})$, $\tw_{ij}^2=\fe$,
where $\pi$ is as in \ref{subsect:Clifford}.
Set
\begin{align*}
\wtW_0 &= \grp{ \tw_i \mid i=1,2,\dots,n } \\
\wtW_1 &= \grp{ \tw_{i,i+1} \mid i=1,2,\dots,n-1 }.
\end{align*}
Then the Weyl group of $D_0(V)$ can be identified with $(\wtW_0\rtimes\wtW_1)/\grp{-\fe}$.

\section{Irreducible characters}\label{sec:irr-cl}

In this section, we consider the irreducible characters of finite special Clifford groups and finite spin groups by specializing Lusztig's Jordan decomposition.

\subsection{Notation}\label{notation-spaces-groups}
Let $p$, $q$, $\F_q$ and $\overline{\F}_q$ be as \S \ref{subsec:pre,lie}.
Throughout we assume that $p$ (and thus $q$) is odd.
Let $n$ be a positive integer and $\bV$ (resp. $\bV^*$) be the orthogonal (resp. symplectic) space over $\OF_q$ of dimension $2n+1$ (resp. $2n$).
Let $\wtbG=D_0(\bV)$, $\bG=\Spin(\bV)$ and $\bH=\SO(\bV)$ be the special Clifford group, the spin group and the special orthogonal group on $\bV$ respectively.
Then $\bG$ is a simply connected simple algebraic group and the containment $\bG\le\wtbG$ is a regular embedding (see \cite[Def. 1.7.1]{GM20}).
Moreover,
$\bH=\wtbG/\Z(\wtbG)$, $\wtbG=\bG \Z(\wtbG)$ and $\bG=[\wtbG,\wtbG]$.
Let $\fe$ be the identity element of $\wtbG$, then $\Z(\wtbG)=\overline{\F}_q^\ti\fe$.
We denote by $\pi:\wtbG\to\bH$ the natural epimorphism.
Considering the dual groups,
we have $\wtbG^*=\CSp(\bV^*)$, $\bG^*=\mathrm{PCSp}(\bV^*)$ and $\bH^*=\Sp(\bV^*)$,
the conformal symplectic group, projective conformal symplectic group and symplectic group on $\bV^*$ respectively.

Denote by $F$ both the standard Frobenius maps on $\wtbG$ and $\wtbG^*$ defining an $\F_q$-structure on them.
Let $V$ (resp. $V^*$) be the orthogonal (resp. symplectic) space over $\F_q$ of dimension $2n+1$ (resp. $2n$).
We also write $V=\bV^F$ and $V^*={\bV^*}^F$.
Then the $F$-fixed pointed of the above groups are groups on $V$ (or $V^*$), \emph{i.e.}, 
$\wtG=\wtbG^F=D_0(V)$, $G=\bG^F=\Spin(V)$ and $H=\bH^F=\SO(V)$,
while $\wtG^*=\wtbG^{*F}=\CSp(V^*)$, $G^*=\bG^{*F}=\mathrm{PCSp}(V^*)$ and $H^*={\bH^*}^F=\Sp(V^*)$.
Also $\Z(\wtG)=\F_q^\ti\fe$ and $\pi(\wtG)=H=\wtG/\Z(\wtG)$.

\subsection{Semisimple elements of conformal symplectic groups}

The semisimple elements of conformal symplectic group are classified by Shinoda \cite{Sh80},
using the methods of \cite{SS70}.
Let $(\F_q^\ti)^2=\{x^2\mid x\in\F_q^\ti \}$.
Denote by $\Irr(\F_q[X])$ the set of all monic irreducible polynomials over $\F_q$. 
Fix a $\xi\in \F_q^\ti$.
For any $\Ga\in\Irr(\F_q[X])$ with $\Ga\ne X$,
denote by $\Ga_\xi^*$ the monic irreducible polynomial
whose roots are $\xi\alp^{-1}$ where $\alp$ runs through the roots of $\Ga$.
Set 
\begin{align*}
\scF_{\xi,0}&=\left\{ X-\xi_0,X+\xi_0 ~\right\}\ \text{if} \ \xi=\xi_0^2\in(\F_q^\ti) ^2 ,\\
\scF_{\xi,0}&=\left\{ X^2-\xi ~\right\}\ \text{if} \ \xi\notin(\F_q^\ti) ^2 ,\\
\scF_{\xi,1}&=\left\{ \Ga\in\Irr(\F_{q}[x])\mid \Ga\neq X,\Ga=\Ga^*_\xi, \Ga\nmid X^2-\xi ~\right\},\\
\scF_{\xi,2}&=\left\{~ \Ga\Ga^* ~|~ \Ga\in\Irr(\F_{q}[x]), \Ga\neq X,\Ga\ne\Ga^*_\xi, \Ga\nmid X^2-\xi ~\right\}
\end{align*}
and $\scF_\xi=\scF_{\xi,0}\cup\scF_{\xi,1}\cup\scF_{\xi,2}$.
Note that $\scF_{\xi,0}$, $\scF_{\xi,1}$, $\scF_{\xi,2}$ and $\scF_\xi$ for $\xi=1$
are respectively $\scF_0$, $\scF_1$, $\scF_2$ and $\scF$ in \cite{An94},
but we will not omit $\xi$ when $\xi=1$,
since otherwise $\scF_\xi$ for $\xi=1$ may be confused with $\scF_1$ in  \cite{An94}.

Given $\Ga\in\scF_\xi$,
denote by $d_\Ga$ its degree and by $\de_\Ga$ its \emph{reduced degree} defined by
$$\de_\Ga=\begin{cases}
d_\Ga & \text{if}\ \Ga\in\scF_{\xi,0}; \\
\frac{1}{2}d_\Ga & \text{if}\ \Ga\in\scF_{\xi,1}\cup \scF_{\xi,2}.
\end{cases}$$
Note that every polynomial in $\scF_{\xi,1}\cup \scF_{\xi,2}$ has even degree,
so $\de_\Ga$ is an integer.
In addition, we also mention a sign $\veps_\Ga$ for $\Ga\in\scF_{\xi,1}\cup \scF_{\xi,2}$ defined by
$$\veps_\Ga=\begin{cases} -1 & \text{if}\ \Ga\in\scF_{\xi,1}; \\
1 & \text{if}\ \Ga\in\scF_{\xi,2}.
\end{cases}$$

By \cite[\S 1]{Sh80},
the polynomials in $\scF_\xi$ serve as the \emph{elementary divisors} for all semisimple elements of $\wtG^*$.
Let $\ts$ be a semisimple element of $\wtG^*$ with multiplier $\xi$,
\emph{i.e.}, $(\ts u_1,\ts u_2)=\xi (u_1,u_2)$ for any $u_1,u_2\in V^*$.
Then $\ts=\prod_{\Ga\in\scF_\xi}\ts_\Ga$,
where $\ts_\Ga$ is a semisimple element having a unique elementary divisor $\Ga$.
We denote by $m_\Ga(\ts)$ the multiplicity of $\Ga$ in $\ts_\Ga$ (and in $\ts$).
Note that $m_\Ga(\ts)$ is even if $\Ga\in\scF_{\xi,0}$.
This decomposition of $\ts$ corresponds to a decomposition of $V^*=\perp_{\Ga\in\scF_\xi} V^*_\Ga$.

\subsection{Centralizers of semisimple elements of conformal symplectic groups}\label{subsec:centralizer}

To use Lusztig's Jordan decomposition,
we also need the centralizers of semisimple elements.
We use the convention to denote $\GU(n,q)$ as $\GL(n,-q)$.
\begin{lem}\label{lem-cen-ss-symp}
With the primary decomposition $\ts= \prod_{\Ga\in\scF_\xi} \ts_\Ga$,
we have a decomposition
\[ \C_{H^*}(\ts) = \prod_\Ga \C_\Ga(\ts_\Ga) ~\textrm{with}~
\C_\Ga(\ts_\Ga) = \C_{\Sp(V_\Ga^*)}(\ts_\Ga), \]
where
\begin{equation}\label{equ:cen-Sp}
\C_\Ga(\ts_\Ga)\cong
\begin{cases}
\Sp(V^*_\Ga)\cong\Sp_{m_\Ga(\ts)}(q) & \text{if}\ \xi\in(\F_q^\ti)^2, \Ga\in\scF_{\xi,0} \\
\Sp_{m_\Ga(\ts)}(q^2) & \text{if}\ \xi\notin(\F_q^\ti)^2, \Ga\in\scF_{\xi,0},\\
\GL_{m_\Ga(\ts)}(\veps_\Ga q^{\de_\Ga}) & \text{if}\ \Ga\in\scF_{\xi,1}\cup\scF_{\xi,2}.
\end{cases}
\addtocounter{thm}{1}\tag{\thethm}
\end{equation}
In particular, $\C_\Ga(\ts_\Ga)\cong \Sp_{m_\Ga(\ts)}(q^{d_\Ga})$ if $\Ga\in\scF_{\xi,0}$.
\end{lem}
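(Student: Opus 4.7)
The plan is to reduce to primary components and then identify each $\C_{\Sp(V^*_\Ga)}(\ts_\Ga)$ with the asserted classical group, essentially following the analysis of Shinoda~\cite{Sh80} adapted to the similitude setting.

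First I would show that $V^*=\perp_{\Ga\in\scF_\xi}V^*_\Ga$ is an orthogonal decomposition with respect to the symplectic form, so that each $V^*_\Ga$ is a nondegenerate symplectic subspace stabilised by $\ts$. The key observation is that $\ts\in\CSp(V^*)$ with multiplier $\xi$ forces the (generalised) eigenvalues of $\ts$ to be paired by $\alp\mapsto\xi\alp^{-1}$: if $u,v$ are eigenvectors with eigenvalues $\alp,\beta$, then $\alp\beta\,(u,v)=(\ts u,\ts v)=\xi(u,v)$, so either $\beta=\xi\alp^{-1}$ or $(u,v)=0$. The three families $\scF_{\xi,0},\scF_{\xi,1},\scF_{\xi,2}$ are defined so that the roots of each $\Ga$ form a single orbit under $\alp\mapsto\xi\alp^{-1}$, so that $V^*_\Ga\perp V^*_{\Ga'}$ for $\Ga\ne\Ga'$. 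This immediately yields $\C_{H^*}(\ts)=\prod_\Ga \C_{\Sp(V^*_\Ga)}(\ts_\Ga)$.

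The case analysis then proceeds as follows. For $\Ga\in\scF_{\xi,0}$ with $\xi=\xi_0^2\in(\F_q^\ti)^2$, $\ts_\Ga$ acts on $V^*_\Ga$ as the scalar $\pm\xi_0$, so its centraliser is all of $\Sp(V^*_\Ga)\cong\Sp_{m_\Ga(\ts)}(q)$. For $\Ga\in\scF_{\xi,0}$ with $\xi\notin(\F_q^\ti)^2$, the algebra $\F_q[\ts_\Ga]\cong\F_{q^2}$ endows $V^*_\Ga$ with an $\F_{q^2}$-structure; since $\xi\alp^{-1}=\alp$ in $\F_{q^2}$, the induced form is $\F_{q^2}$-symplectic and the centraliser is $\Sp_{m_\Ga(\ts)}(q^2)$. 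For $\Ga\in\scF_{\xi,1}$, $\F_q[\ts_\Ga]\cong\F_{q^{d_\Ga}}$ makes $V^*_\Ga$ into an $\F_{q^{d_\Ga}}$-space with $d_\Ga=2\de_\Ga$, and the involution $\alp\mapsto\xi\alp^{-1}$ is nontrivial (since $\Ga\nmid X^2-\xi$) and hence coincides with the Galois involution of $\F_{q^{d_\Ga}}/\F_{q^{\de_\Ga}}$; the induced form is then $\F_{q^{d_\Ga}}$-hermitian and the centraliser is $\GU_{m_\Ga}(q^{\de_\Ga})=\GL_{m_\Ga}(-q^{\de_\Ga})$. Finally for $\Ga=\Ga'\Ga''_\xi\in\scF_{\xi,2}$ with $\Ga''_\xi\ne\Ga'$, the space $V^*_\Ga$ splits as a direct sum of two Lagrangian subspaces $V^*_{\Ga'}$ and $V^*_{\Ga''_\xi}$ in duality, each carrying an $\F_{q^{\de_\Ga}}$-structure, and the centraliser acts diagonally as $\GL_{m_\Ga}(q^{\de_\Ga})$. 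The two subcases are uniformly recorded as $\GL_{m_\Ga}(\veps_\Ga q^{\de_\Ga})$.

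The main obstacle is the careful verification in the $\scF_{\xi,0}$-nonsquare and $\scF_{\xi,1}$ cases that the $\F_{q^{d_\Ga}}$-structure inherited from $\ts_\Ga$, combined with the restriction of the original $\F_q$-symplectic form and an appropriate trace construction (used to define a sesquilinear form $h(u,v)\in\F_{q^{d_\Ga}}$ by $\mathrm{Tr}_{\F_{q^{d_\Ga}}/\F_q}(c\,h(u,v))=(u,cv)$), produces precisely an $\F_{q^2}$-symplectic form and an $\F_{q^{d_\Ga}}/\F_{q^{\de_\Ga}}$-hermitian form respectively. These checks are standard in classical-group theory and were carried out in the similitude setting by Shinoda~\cite{Sh80} building on \cite{SS70}; once they are in place the stated isomorphisms follow by assembling the primary pieces.
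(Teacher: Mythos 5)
Your argument is correct but takes a genuinely different route from the paper's. The paper proves the lemma by passing to the algebraic group: for each $\Ga$ it writes down an explicit diagonal representative $\ts_{\Ga,0}$ in $\Sp_{m_\Ga(\ts)d_\Ga}(\OF_q)$ together with an explicit permutation-type matrix $v_\Ga \in \Sp_{m_\Ga(\ts)d_\Ga}(\F_q)$, invokes the Lang--Steinberg theorem to realise $\ts_\Ga$ as $g_\Ga\ts_{\Ga,0}g_\Ga^{-1}$ with $g_\Ga^{-1}F(g_\Ga)=v_\Ga$, and then reads off $\C_\Ga(\ts_\Ga)\cong \C_{\Sp_{m_\Ga(\ts)d_\Ga}(\OF_q)}(\ts_{\Ga,0})^{v_\Ga F}$ by direct matrix computation with the twisted Frobenius. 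You instead stay over $\F_q$ and argue module-theoretically: restrict the symplectic form to each $\ts$-isotypic piece $V^*_\Ga$ (the orthogonality following from the pairing $\alp\beta=\xi$ of eigenvalues), view $V^*_\Ga$ as a module over $\F_q[\ts_\Ga]\cong\F_{q^{d_\Ga}}$ (or a Lagrangian pair in the $\scF_{\xi,2}$ case), and descend the $\F_q$-symplectic form via the trace to an $\F_{q^2}$-symplectic, skew-hermitian, or hyperbolic $\GL$-type structure according to whether the involution $\alpha\mapsto\xi\alpha^{-1}$ is trivial, Galois of order two, or off-diagonal. Both proofs are correct and standard. The paper's version is more computational and yields explicit rational representatives $\ts_\Ga$, $v_\Ga$, $g_\Ga$, which it then reuses verbatim in the proof of Lemma~\ref{lem-tcent} to construct the element $\tau$ (so the explicit approach is not just a stylistic choice, it pays off immediately). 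Your version is more conceptual and, if one is willing to accept the standard trace-descent lemmas, shorter to state; but note that in the $\scF_{\xi,1}$ case the descended form is skew-hermitian rather than hermitian (that $h(v,u)=-\overline{h(u,v)}$), which of course has the same isometry group, but the statement should be adjusted accordingly. You would also still need to supply the explicit $\tau_\Ga$'s for Lemma~\ref{lem-tcent}, which the paper's construction hands to you for free.
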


\begin{proof}
The decomposition is obvious.
Identify the conformal symplectic group with a matrix group via an appropriate basis.
\begin{enumerate}[(i)]
\item
Assume $\xi=\xi_0^2 \in (\F_q^\ti)^2$ and $\Ga=X\pm\xi_0$.
Then $\ts_\Ga = \pm\xi_0I_{m_\Ga(\ts)}$.
\item
Assume $\xi \notin (\F_q^\ti)^2$ and $\Ga=X^2-\xi$.
Let $\xi_0 \in \F_{q^2}^\ti$ be such that $\xi_0^2=\xi$,
then $\xi_0^{q-1}=-1$ and $\xi_0^q=-\xi_0$.
Let
\[ \ts_{\Ga,0} = I_{m_\Ga(\ts)/2} \otimes \diag\{ \xi_0, \xi_0^q, \xi_0,\xi_0^q \} \]
and set
\[ v_\Ga = I_{m_\Ga(\ts)/2} \otimes
\diag\left\{ \begin{bmatrix} 0&1\\ 1&0 \end{bmatrix}, \begin{bmatrix} 0&1\\ 1&0 \end{bmatrix}\right\}.\]
\item
Assume $\Ga \in \scF_{\xi,2}$ and $b\in\F_{q^{\de_\Ga}}$ is a root of $\Ga$.
Let
\[ \ts_{\Ga,0} = I_{m_\Ga(\ts)} \otimes
\diag\{ b, b^q, \dots, b^{q^{\de_\Ga-1}}, \xi b^{-1},\xi b^{-q},\dots,\xi b^{-q^{\de_\Ga-1}} \} \]
and set
\[ v_\Ga = I_{m_\Ga(\ts)} \otimes 
\diag\left\{ \begin{bmatrix} 0&1\\I_{\de_\Ga-1}&0 \end{bmatrix},
\begin{bmatrix} 0&1\\I_{\de_\Ga-1}&0 \end{bmatrix}\right\}. \]
\item
Assume $\Ga \in \scF_{\xi,1}$ and $b\in\F_{q^{2\de_\Ga}}$ is a root of $\Ga$
such that $b^{q^{\de_\Ga}+1}=\xi$.
Let
\[ \ts_{\Ga,0} = I_{m_\Ga(\ts)} \otimes
\diag\{ b, b^q, \dots, b^{q^{\de_\Ga-1}},
b^{q^{\de_\Ga}}, b^{q^{\de_\Ga+1}}, \dots, b^{q^{d_\Ga-1}} \} \]
and set
\[ v_\Ga = I_{m_\Ga(\ts)} \otimes 
\begin{bmatrix} 0&-1\\I_{d_\Ga-1}&0 \end{bmatrix}. \]
\end{enumerate}
By the construction in (ii) $\sim$ (iv),
$v_\Ga \in \Sp_{m_\Ga(\ts)d_\Ga}(\F_q)$.
By Lang--Steinberg theorem (see for example \cite[Thm.~21.7]{MT11}),
there is $g_\Ga \in \Sp_{m_\Ga(\ts)d_\Ga}(\OF_q)$ such that $g_\Ga^{-1}F(g_\Ga)=v_\Ga$.
Then it is easy to see that $\ts_{\Ga,0} \in \Sp_{m_\Ga(\ts)d_\Ga}(\OF_q)^{v_\Ga F}$
and we may assume $\ts_\Ga = g_\Ga \ts_{\Ga,0} g_\Ga^{-1}$ up to conjugacy.
Since
\[ \C_\Ga(\ts_\Ga) \cong \C_{\Sp_{m_\Ga(\ts)d_\Ga}(\OF_q)}(\ts_{\Ga,0})^{v_\Ga F}, \]
the assertion for $\C_\Ga(\ts_\Ga)$ follows easily by direct calculations.
\end{proof}

\begin{lem}\label{lem-tcent}
With the primary decomposition $\ts= \prod_{\Ga\in\scF_\xi} \ts_\Ga$,
there are $\tau_\Gamma$ for each $\Gamma$ and $\tau=\prod_\Ga\tau_\Ga$ such that the following hold.
\begin{enumerate}[\rm(1)]
\item
We have $\tau_\Ga^{q-1} \in \Z(\C_\Ga(\ts_\Ga))$ for any $\Ga\in\scF_\xi$
and $[\tau_\Ga,\C_\Ga(\ts_\Ga)]=1$	for any $\Ga\notin\scF_{\xi,0}$.
\item
We have $\wtG^*=\grp{\tau,H^*}$,
$\C_{\wtG^*}(\ts)=\grp{\tau,\C_{H^*}(\ts)}$ and $\tau^{q-1}\in \Z(\C_{H^*}(\ts))$.
\item
We have $|\C_{G^*}(\ts):\C_{H^*}(\ts)| = q-1 = |\wtG^*:H^*|$.
\item
Set $\C_{\wtG^*}(\ts)_0=\Grp{\tau,\prod\limits_{\Ga\in\scF_{\xi,0}}\C_\Ga(\ts_\Ga)}$
and $\C_{\wtG^*}(\ts)_+=\prod\limits_{\Ga\in\scF_{\xi,1}\cup\scF_{\xi,2}} \C_\Ga(\ts_\Ga)$,
then $\C_{\wtG^*}(\ts)$ is a central product of $\C_{\wtG^*}(\ts)_0$ and $\C_{\wtG^*}(\ts)_+$
over $\langle \tau^{q-1} \rangle$.
\end{enumerate} 
\end{lem}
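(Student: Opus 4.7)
The plan is to construct a single element $\tau=\prod_{\Ga\in\scF_\xi}\tau_\Ga\in\wtG^*=\CSp(V^*)$, where each local factor $\tau_\Ga\in\CSp(V^*_\Ga)$ has the same multiplier $\alp$ (a fixed generator of $\F_q^\ti$), centralizes $\ts_\Ga$, and, when $\Ga\in\scF_{\xi,1}\cup\scF_{\xi,2}$, centralizes the entire factor $\C_\Ga(\ts_\Ga)$. Once such a $\tau$ is in hand, all four assertions will follow from the construction together with short index calculations based on the multiplier map $\wtG^*\to\F_q^\ti$, whose kernel is $H^*$.

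For $\Ga\in\scF_{\xi,0}$, the multiplicity $m_\Ga(\ts)$ is even and $\ts_\Ga$ acts as a scalar (over $\F_q$ if $\Ga=X\pm\xi_0$, over $\F_{q^2}$ if $\Ga=X^2-\xi$); pick a $\ts_\Ga$-stable Lagrangian decomposition $V^*_\Ga=L_\Ga\oplus L'_\Ga$ and let $\tau_\Ga$ act by $\alp$ on $L_\Ga$ and by the identity on $L'_\Ga$. For $\Ga\in\scF_{\xi,2}$, the generalised eigenspaces of $\ts_\Ga$ corresponding to the two irreducible factors of $\Ga$ already provide a $\ts_\Ga$-stable, totally isotropic splitting $V^*_\Ga=L_\Ga\oplus L'_\Ga$ which is $\C_\Ga(\ts_\Ga)\cong\GL_{m_\Ga}(q^{\de_\Ga})$-stable, and the same Lagrangian-scaling recipe works (the scalar $\alp\in\F_q^\ti$ is central in $\mathrm{End}_{\F_{q^{\de_\Ga}}}(L_\Ga)$, so the construction commutes with $\C_\Ga(\ts_\Ga)$). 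For $\Ga\in\scF_{\xi,1}$ no such isotropic splitting exists; instead view $V^*_\Ga$ as a free module over $E=\F_q[X]/\Ga\cong\F_{q^{2\de_\Ga}}$, under which the $\F_q$-symplectic form becomes hermitian with respect to the involution $b\mapsto b^{q^{\de_\Ga}}$. Pick $c\in E^\ti$ with $c\cdot c^{q^{\de_\Ga}}=\alp$, which is possible since the norm $E^\ti\to\F_{q^{\de_\Ga}}^\ti$ is surjective and $\alp\in\F_q^\ti\subseteq\F_{q^{\de_\Ga}}^\ti$; then take $\tau_\Ga$ to be scalar multiplication by $c$, which commutes with every $E$-linear endomorphism of $V^*_\Ga$ (in particular with $\C_\Ga(\ts_\Ga)\cong\GU_{m_\Ga}(q^{\de_\Ga})$) and has multiplier $\alp$.

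Assertion (1) is immediate from the construction: for $\Ga\in\scF_{\xi,0}$, $\tau_\Ga^{q-1}$ acts trivially on both $L_\Ga$ and $L'_\Ga$ (since $\alp^{q-1}=1$) and therefore equals the identity of $\C_\Ga(\ts_\Ga)$, while for $\Ga\notin\scF_{\xi,0}$ the relation $[\tau_\Ga,\C_\Ga(\ts_\Ga)]=1$ is built in. Assertion (4) then translates the orthogonal decomposition $V^*=\perp_\Ga V^*_\Ga$ into group theory: the factors $\C_\Ga(\ts_\Ga)$ for distinct $\Ga$ commute elementwise, and the only overlap between $\C_{\wtG^*}(\ts)_0$ and $\C_{\wtG^*}(\ts)_+$ is the central subgroup $\grp{\tau^{q-1}}$. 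For (2), the image of $\tau$ under the multiplier map is the generator $\alp$ of $\F_q^\ti$, so $\wtG^*=\grp{\tau,H^*}$; intersecting with the centralizer of $\ts$ and using $[\tau,\ts]=1$ yields $\C_{\wtG^*}(\ts)=\grp{\tau,\C_{H^*}(\ts)}$, and $\tau^{q-1}\in\Z(\C_{H^*}(\ts))$ follows by combining (1) over all $\Ga$.

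Finally, part (3) is a short consequence of (2): the equalities $|\wtG^*:H^*|=q-1$ and $|\C_{\wtG^*}(\ts):\C_{H^*}(\ts)|=q-1$ both come from the multiplier map, and one identifies $\C_{G^*}(\ts)$ with the corresponding preimage in $\wtG^*$ under the isogeny $\wtG^*\twoheadrightarrow G^*$ whose central kernel is $\Z(\wtG^*)\cong\F_q^\ti$. The main obstacle in the whole argument is the $\scF_{\xi,1}$ construction: the absence of a $\ts_\Ga$-stable isotropic splitting forces one to pass to the extension field $E=\F_q[X]/\Ga$ and to realise the prescribed multiplier as a norm $c\cdot c^{q^{\de_\Ga}}=\alp$, with the existence of such a $c$ resting on surjectivity of the norm map for finite fields.
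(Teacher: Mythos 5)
Your proof is essentially correct, but it follows a genuinely different route from the paper's. The paper constructs each $\tau_\Ga$ as a completely explicit diagonal matrix $\tau_{\Ga,0}$ in $\Sp(\OF_q)$ (for instance $I\otimes\diag\{1,\zeta\}$ for $\Ga\in\scF_{\xi,0}\cup\scF_{\xi,2}$, and $I\otimes\diag\{\zeta_1,\zeta_1^q,\dots\}$ with $\zeta=\zeta_1^{q^{\de_\Ga}+1}$ for $\Ga\in\scF_{\xi,1}$) and then transports it back to the finite group by the same Lang--Steinberg element $g_\Ga$ used in Lemma~\ref{lem-cen-ss-symp}, so that $\tau_\Ga=g_\Ga\tau_{\Ga,0}g_\Ga^{-1}$; all four assertions are then settled by inspection of the matrices. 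You instead build $\tau_\Ga$ intrinsically: a Lagrangian scaling on a $\ts_\Ga$-stable splitting for $\Ga\in\scF_{\xi,0}\cup\scF_{\xi,2}$, and scalar multiplication by $c$ with $\operatorname{N}_{E/\F_{q^{\de_\Ga}}}(c)=\alp$ for $\Ga\in\scF_{\xi,1}$. The observation that a norm element replaces the paper's $\zeta_1^{q^{\de_\Ga}+1}=\zeta$ is exactly the right translation, and the surjectivity of the finite-field norm substitutes for the paper's explicit choice of $\zeta_1$. Your method trades the paper's verifiability-by-computation for conceptual transparency (in particular, it makes clear \emph{why} $\tau_\Ga$ commutes with $\C_\Ga(\ts_\Ga)$ when $\Ga\notin\scF_{\xi,0}$: it is $E$- or $\F_{q^{\de_\Ga}}$-linear and scalar on each isotropic factor), but it silently relies on the existence of a $\ts_\Ga$-stable Lagrangian decomposition for $\Ga=X^2-\xi$ with $\xi$ a non-square, which requires the $\F_{q^2}$-structure on $V_\Ga^*$ making $\C_\Ga(\ts_\Ga)\cong\Sp_{m_\Ga(\ts)}(q^2)$ visible; it would be worth a sentence to justify this.

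One caution on part (3): the statement $\C_{\wtG^*}(\ts)=\grp{\tau,\C_{H^*}(\ts)}$ with $\tau$ of multiplier a generator of $\F_q^\ti$ immediately gives $|\C_{\wtG^*}(\ts):\C_{H^*}(\ts)|=q-1$, which is the content you correctly extract. However, the further step ``identify $\C_{G^*}(\ts)$ with the corresponding preimage in $\wtG^*$'' is not literally correct: the full preimage of $\C_{G^*}(\bar\ts)$ under $\wtG^*\twoheadrightarrow G^*$ is $\{g\in\wtG^*: g\ts g^{-1}\in\{\pm\ts\}\}$, which can contain $\C_{\wtG^*}(\ts)$ with index~$2$ (this is the possible disconnectedness of $\C_{\bG^*}(\bar\ts)$), so the preimage need not equal $\C_{\wtG^*}(\ts)$. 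The index you actually want is that of $\C_{H^*}(\ts)$ in $\C_{\wtG^*}(\ts)$, and that is the one you have already computed from the multiplier map; the passage through $G^*$ is a detour that would need more care than the one-line ``identification'' provides.
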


\begin{proof}
Let $\zeta$ be a generator of $\F_q^\ti$.
For $\Ga\in\scF_{\xi,0}$, set
\[ \tau_{\Ga,0} =
\begin{cases}
I_{m_\Ga(\ts)/2} \otimes \diag\{ 1,\zeta \}, & \xi=\xi_0^2 \in (\F_q^\ti)^2,\; \Ga=X\pm\xi_0, \\
I_{m_\Ga(\ts)/2} \otimes \diag\{ 1,1,\zeta,\zeta \}, & \xi \notin (\F_q^\ti)^2,\; \Ga=X^2-\xi.
\end{cases} \]
For $\Ga \in \scF_{\xi,2}$, set
\[ \tau_{\Ga,0} = I_{m_\Ga(\ts)\de_\Ga} \otimes \diag\{ 1,\zeta \}. \]
For $\Ga \in \scF_{\xi,1}$,
let $\zeta_1 \in \OF_q$ be such that $\zeta=\zeta_1^{q^{\de_\Ga}+1}$ and set
\[ \tau_{\Ga,0} = I_{m_\Ga(\ts)} \otimes
\diag\{ \zeta_1,\zeta_1^q,\dots,\zeta_1^{q^{\de_\Ga-1}},
\zeta_1^{q^{\de_\Ga}},\zeta_1^{q^{\de_\Ga+1}},\dots,\zeta_1^{q^{d_\Ga-1}} \}; \]
note that $\zeta_1^{q^{\de_\Ga}}=\zeta_1^{-1}\zeta$, $\zeta_1^{q^{\de_\Ga+1}}=\zeta_1^{-q}\zeta$,
$\cdots$, $\zeta_1^{q^{d_\Ga-1}}=\zeta_1^{-q^{\de_\Ga-1}}\zeta$.
For all cases, set $\tau_\Ga=g_\Ga\tau_{\Ga,0}g_\Ga^{-1}$,
where $g_\Ga$ is as in Lemma \ref{lem-cen-ss-symp}.
All the assertions then follow by direct calculations.
\end{proof}

\begin{rem}
The above lemma with a direct calculation proof generalizes slightly \cite[(1A), (1B)]{FS89}.
\end{rem}

\subsection{Irreducible characters}\label{subsec:irr-char}
We will make use of partitions and Lusztig symbols
(see e.g. \cite[\S 1 and \S 5]{Ol93} for the definitions of these combinatorics).
Let $\ts$ be a semisimple element of $\wtG^*$ with multiplier $\xi$.
We denote by $\Psi_\Ga(\ts)$ the set of Lusztig symbols of rank $\frac{m_\Ga(\ts)}{2}$ and odd defect
if $\Ga\in\scF_{\xi,0}$, 
while we define $\Psi_\Ga(\ts)$ to be the set of partitions of $m_\Ga(s)$
if $\Ga\in\scF_{\xi,1}\cup\scF_{\xi,2}$.
Then $\Psi_\Ga(\ts)$ is a labeling set for the unipotent characters of $C_\Ga(\ts_\Ga)$.
Let $\Psi(\ts)=\prod_{\Ga\in\scF_{\xi}}\Psi_\Ga(\ts)$.
Since $[\C_{\wtG^*}(\ts),\C_{\wtG^*}(\ts)]=[\C_{H^*}(\ts),\C_{H^*}(\ts)]$,
the set $\Psi(\ts)$ is a labeling set for the unipotent characters of both $\C_{\wtG^*}(\ts)$ and $\C_{H^*}(\ts)$.
For $\mu\in\Psi(\ts)$,
denote by $\widetilde\psi_{\mu}$ the unipotent character of $\C_{\wtG^*}(\ts)$ corresponding to $\mu$.
Then by Jordan decomposition, the map
\begin{equation}\label{equ:jor-dec-irr}
(\ts,\mu)\mapsto \tchi_{\ts,\mu}=\fJ_{\ts}^{\wtG}(\psi_\mu)
\addtocounter{thm}{1}\tag{\thethm}
\end{equation}
is a bijection from the set of the $\wtG^*$-conjugacy classes of the pairs $(\ts,\mu)$,
where $\ts$ is a semisimple element of $\wtG^*$ and $\mu\in\Psi(\ts)$,
to $\Irr(\wtG)$.

By \cite[Prop.~2.5.20]{GM20},
there is an isomorphism of abelian groups
\begin{equation}\label{equ:Z(G*)-Lin(G)}
\Z(\wtG^*) \rightarrow \Irr(\wtG/G),\quad \tz \mapsto \htz.
\addtocounter{thm}{1}\tag{\thethm}
\end{equation}
We identify $\Z(\wtG^*)$ with $\F_q^\ti$.
For $\tilde z\in \Z(\wtG^*)$ and $\Ga\in\scF_\xi$ for some $\xi\in\F_q^\ti$,
if $\alp\in\overline{\F}_q^\ti$ is a root of $\Ga$,
then we let $\tilde z.\Ga$ be the polynomial in $\scF_{\tz^2\xi}$ such that $\tilde z.\Ga$ has a root $\tilde z\alp$.
Let $(\ts,\mu)$ be as in above paragraph.
Then we define $\tilde z.\mu$ by setting $(\tilde z.\mu)_{\tilde z.\Ga}=\mu_\Ga$.
In particular, if $\tilde z=-1_{V}$,
then we also denote by $-\Ga$, $-\mu$ for $\tilde z.\Ga$, $\tilde z.\mu$.

By \cite[Thm.~4.7.1~(3)]{GM20},
we have the following lemma.

\begin{lem}\label{char-lin-act}
For $\tilde z\in \Z(\wtG)$ and  $\tchi_{\ts,\mu}\in\Irr(\wtG)$, one has $\hat{\tilde z}\tchi_{\ts,\mu}=\tchi_{\tilde z\ts,\tilde z.\mu}$.
\end{lem}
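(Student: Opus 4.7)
The plan is to deduce the statement from \cite[Thm.~4.7.1~(3)]{GM20} by recognizing that $\mu$ and $\tz.\mu$ parameterize the \emph{same} unipotent character of the common centralizer $\C_{\wtG^*}(\ts)=\C_{\wtG^*}(\tz\ts)$; only the labelling changes when the primary decomposition is read off from $\tz\ts$ rather than $\ts$.

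First I would invoke \cite[Thm.~4.7.1~(3)]{GM20}: for $\tz\in\Z(\wtG^*)$, tensoring by $\hat{\tz}$ takes $\cE(\wtG,\ts)$ onto $\cE(\wtG,\tz\ts)$ and is compatible with Jordan decomposition in the sense that
\[ \hat{\tz}\,\fJ^{\wtG}_{\ts}(\psi)=\fJ^{\wtG}_{\tz\ts}(\psi), \]
where on the right-hand side one uses that multiplication by the central element $\tz$ leaves the centralizer unchanged, so $\C_{\wtG^*}(\tz\ts)=\C_{\wtG^*}(\ts)$ and $\psi$ makes sense as a unipotent character of this common group. It therefore suffices to prove that $\psi_\mu$ and $\psi_{\tz.\mu}$ are the same unipotent character of this group.

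Next I would unravel the two primary decompositions. The element $\ts$ gives $V^*=\perp_{\Ga\in\scF_\xi}V^*_\Ga$ with $\ts=\prod_\Ga\ts_\Ga$, while $\tz\ts$ has multiplier $\tz^2\xi$ and the same spatial decomposition, except that the index $\Ga\in\scF_\xi$ is relabelled to $\tz.\Ga\in\scF_{\tz^2\xi}$, since $\tz\ts_\Ga$ has unique elementary divisor $\tz.\Ga$ on $V^*_\Ga$. A direct case analysis on the definitions in \S\ref{subsec:irr-char} shows that $\Ga\mapsto\tz.\Ga$ preserves the class $\scF_{\cdot,i}$, the reduced degree $\de_\Ga$, the multiplicity $m_\Ga(\ts)$ and the sign $\veps_\Ga$ when it is defined. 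Consequently the factors $\C_\Ga(\ts_\Ga)$ in Lemma \ref{lem-cen-ss-symp} and $\C_{\tz.\Ga}(\tz\ts_\Ga)$ are literally the same subgroup of $\Sp(V^*_\Ga)$, and the parameter sets $\Psi_\Ga(\ts)$ and $\Psi_{\tz.\Ga}(\tz\ts)$ are canonically identified so that $(\tz.\mu)_{\tz.\Ga}=\mu_\Ga$ indexes the same unipotent character as $\mu_\Ga$ does for $\Ga$. Therefore $\psi_{\tz.\mu}=\psi_\mu$, and combining with the first step gives $\hat{\tz}\,\tchi_{\ts,\mu}=\tchi_{\tz\ts,\tz.\mu}$.

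The main obstacle is the bookkeeping in the case analysis: one has to track that the shift $\Ga\mapsto\tz.\Ga$ respects every invariant feeding into the definition of the unipotent parameterization, most delicately the distinction between $\scF_{\xi,1}$ and $\scF_{\xi,2}$ and the transition between $\xi\in(\F_q^\ti)^2$ and $\xi\notin(\F_q^\ti)^2$. Once this routine but non-trivial verification is in place, the rest of the argument is essentially formal, and no additional input about the conformal factor $\tau$ from Lemma \ref{lem-tcent} is needed because the two centralizers coincide in $\wtG^*$ a priori.
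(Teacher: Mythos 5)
Your proof is correct and follows the same route as the paper, which records the lemma simply by citing \cite[Thm.~4.7.1~(3)]{GM20}. The extra work you carry out — checking that $\C_{\wtG^*}(\tz\ts)=\C_{\wtG^*}(\ts)$ and that the relabelling $\Ga\mapsto\tz.\Ga$ respects the class $\scF_{\cdot,i}$, the reduced degree, the multiplicity and the sign, so that $\psi_\mu$ and $\psi_{\tz.\mu}$ are literally the same unipotent character of the common centralizer — is exactly the bookkeeping the paper leaves implicit, and it is a correct and complete justification for the precise form $\hat{\tz}\tchi_{\ts,\mu}=\tchi_{\tz\ts,\tz.\mu}$.
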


Now we consider the irreducible characters of $G=\Spin_{2n+1}(q)$.

\begin{thm}\label{thm:char-spin}
Let $\tchi_{\ts,\mu}\in\Irr(\wtG)$ where $\ts$ is a semisimple element of $\wtG^*$ with multiplier $\xi$.
Then $\Res^{\wtG}_G(\tchi_{\ts,\mu})$ is not irreducible
if and only if $\mu_{\Ga}=\mu_{-\Ga}$ for every $\Ga$.
\end{thm}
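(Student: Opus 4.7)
The plan is to apply Clifford theory for the cyclic quotient $\wtG/G$, identifying $\Lin(\wtG/G)$ with $\Z(\wtG^*)=\F_q^\ti$ via \eqref{equ:Z(G*)-Lin(G)}. Since $\wtG/G$ is cyclic, $\Res^\wtG_G\tchi_{\ts,\mu}$ is reducible if and only if the stabiliser of $\tchi_{\ts,\mu}$ in $\Lin(\wtG/G)$ is non-trivial; by Lemma~\ref{char-lin-act} and Jordan decomposition this amounts to the existence of some $\tz\in\F_q^\ti\setminus\{1\}$ with $(\tz\ts,\tz.\mu)$ being $\wtG^*$-conjugate to $(\ts,\mu)$.

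First I would restrict the possibilities for $\tz$ via the multiplier. Since the scalar $\tz\cdot I\in\CSp(V^*)$ has multiplier $\tz^2$, the element $\tz\ts$ has multiplier $\tz^2\xi$ whereas $\ts$ has multiplier $\xi$, and the multiplier is a $\wtG^*$-conjugacy invariant. Hence $\tz^2=1$, so $\tz\in\{\pm 1\}$, and the question reduces to whether $(-\ts,-\mu)$ is $\wtG^*$-conjugate to $(\ts,\mu)$.

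Second, I would unpack this via the primary decomposition. Since $V^*_\Ga(-\ts)=V^*_{-\Ga}(\ts)$ and centralising $-\ts_\Ga$ is the same as centralising $\ts_\Ga$, the centralisers $\C_{\wtG^*}(\ts)$ and $\C_{\wtG^*}(-\ts)$ coincide as subgroups of $\wtG^*$, and the factor decomposition from Lemma~\ref{lem-cen-ss-symp} matches the $(-\Ga)$-factor of the second with the $\Ga$-factor of the first. Because the definition $(-\mu)_{-\Ga}=\mu_\Ga$ respects exactly this identification, $\psi_{-\mu}$ coincides literally with $\psi_\mu$ as a character of the common subgroup $\C_{\wtG^*}(\ts)$. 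Consequently the conjugacy condition becomes: there exists $g\in\wtG^*$ with $g(-\ts)g^{-1}=\ts$ (forcing $m_\Ga(\ts)=m_{-\Ga}(\ts)$ for all $\Ga$) and $g\cdot\psi_\mu=\psi_\mu$.

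Third, I would analyse this $g$-invariance factor by factor. Conjugation by $g$ sends $\C_\Ga(\ts_\Ga)$ to $\C_{-\Ga}(\ts_{-\Ga})$ via an isomorphism induced by the symplectic similitude $V^*_\Ga(\ts)\to V^*_{-\Ga}(\ts)$; a careful check shows that this isomorphism respects the $\F_{q^{\de_\Ga}}$-structure inherited from $\ts$ and hence preserves the intrinsic labelling of unipotent characters by Lusztig symbols or partitions, so that $g$-invariance of $\psi_\mu=\bigotimes_\Ga\psi_{\mu_\Ga}$ is equivalent to $\mu_\Ga=\mu_{-\Ga}$ for every $\Ga$. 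For the converse one builds such a $g$ explicitly as a symplectic similitude using intertwiners between $V^*_\Ga(\ts)$ and $V^*_{-\Ga}(\ts)$ on each primary piece. The main technical obstacle lies in this label-preservation claim, which requires ruling out any graph-automorphism contribution on the $\GL$-type factors $\C_\Ga(\ts_\Ga)$ for $\Ga\in\scF_{\xi,1}\cup\scF_{\xi,2}$.
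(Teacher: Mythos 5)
Your proof is correct and follows essentially the same route as the paper's: Clifford theory to count $|\Irr(G\mid\tchi)|$, reduction to the quadratic character $z_0=-1$, then Lemma~\ref{char-lin-act} to translate $\widehat{z_0}\tchi=\tchi$ into $\wtG^*$-conjugacy of $(\ts,\mu)$ and $(-\ts,-\mu)$. The paper deduces $\tz\in\{\pm1\}$ from $[\wtG:G\Z(\wtG)]=2$ rather than your multiplier computation, but the two routes are equivalent. Where you go further is the final step: the paper treats the equivalence between conjugacy of the pairs and the condition $\mu_\Ga=\mu_{-\Ga}$ as immediate from the way the parametrization is set up (two pairs are conjugate precisely when the semisimple parts are conjugate and the labels agree, which uses the same label-preservation fact tacitly), whereas you unpack the transport factor by factor. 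The ``main technical obstacle'' you flag is legitimate to raise but resolves favorably: the transpose-inverse (graph) automorphism of $\GL_n$ and $\GU_n$ corresponds to conjugation by the longest element of the Weyl group $\fS_n$, which is inner, so it fixes all unipotent characters, and field and diagonal automorphisms also fix them; since $\C_{\wtbG^*}(\ts)$ is connected (as $\wtbG$ has connected centre, its dual has simply connected derived group), every conjugation by $g\in\wtG^*$ that normalises the centralizer acts through such automorphisms on the factor groups and therefore preserves the intrinsic partition/symbol labelling, closing your argument.
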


\begin{proof}
By Clifford theory (see e.g. \cite[Lemma 2.1~(i)]{Fe19}),
for every $\tchi\in\Irr(\wtG)$,
$|\Irr(G\mid\tchi)|$ is the number of $\tilde z\in \Z(\wtG)$ such that $\hat{\tilde z}\tchi=\tchi$.
Since $|\wtG/\Z(\wtG)G|=2$,
the restriction of $\chi$ to $G$ is either irreducible or splits into two irreducible constituents.
So we let $z_0=-1_{V^*}$,
the only element in $\Z(\wtG^*)$ of order $2$ (in fact, $z_0\in \Z(G^*)$).
Therefore, $\Res^{\wtG}_G(\tchi_{\ts,\mu})$ is not irreducible
if and only if $\widehat{z_0}\tchi=\tchi$.
By Lemma \ref{char-lin-act},
$\widehat{z_0}\tchi=\tchi$ if and only if the pairs $(\ts,\mu)$ and $(-\ts,-\mu)$ are $\wtG^*$-conjugate.
This completes the proof.
\end{proof}

\begin{rmk}
\begin{enumerate}[(i)]
\item
Note that $\mu_{\Ga}=\mu_{-\Ga}$ implies that $m_\Ga(\ts)=m_{-\Ga}(\ts)$.
\item
Keep the notation in Theorem \ref{thm:char-spin}.
If $\xi=1$, then $\Z(\wtG)$ is contained in the kernel of $\tchi_{\ts,\mu}$.
Then $\Res^{\wtG}_G(\tchi_{\ts,\mu})$ is irreducible if and only if $\Res^{H}_{G/Z(G)}\tchi_{\ts,\mu}$ is irreducible
when $\chi_{\ts,\mu}$ is regarded as a character of $H=\wtG/\Z(\wtG)$.
This case was already considered in \cite[\S 4.1]{FLZ19}. 
\end{enumerate}	
\end{rmk}

\subsection{Actions of automorphisms on characters}\label{subsect:auto-on-chars}

Denote by $F_p$ be the field automorphism of $\wtG$ and $G$
and by $(F_p)^*$ the corresponding field automorphism on $\wtG^*$ and $G^*$
via duality as in \cite[Def.~2.1]{CS13}.
Then $(F_p)^*$ sends a matrix $(a_{ij})$ in $\wtG^*$ to the matrix $(a_{ij}^p)$.
By \cite[Thm.~2.5.1]{GLS98},
the group $\wtG\rtimes\grp{F_p}$ induces all the automorphisms of $G$.
In addition, $\Out(G)\cong \wtG/G\Z(\wtG)\ti \grp{F_p}$.

Let $\sigma=F_p^i$.
Let $\Ga\in\scF_\xi$ with $\xi\in\F_q^\ti$.
Then we define ${^{\sigma^*}\Ga}$ to be the polynomial in $\scF_{\xi^p}$ such that
if $\alp$ is a root of $\Ga$ then $\alp^{p^i}$ is a root of ${^{\sigma^*}\Ga}$.
Let $\mu\in\Psi(\ts)$ for some semisimple element $\ts$ of $\wtG^*$.
Then we set $\lc{\sigma^*}\mu\in\Psi(\sigma^*(\ts))$
such that $(\lc{\sigma^*}\mu)_{{^{\sigma^*}\Ga}}=\mu_\Ga$ for every $\Ga$.

\begin{prop}\label{field-action-IBr}
Let $\sigma\in\langle F_p \rangle$.	Then
$\tchi_{\ts,\mu}^\sigma=\tchi_{\sigma^*(\ts),\lc{\sigma^*}\mu}$.
\end{prop}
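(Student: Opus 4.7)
The plan is to transport the action of $\sigma$ across Lusztig's Jordan decomposition bijection in \eqref{equ:jor-dec-irr}. Since $\sigma$ and $\sigma^*$ are dual field automorphisms of $\wtG$ and $\wtG^*$, conjugation by $\sigma$ sends $\cE(\wtG,\ts)$ onto $\cE(\wtG,\sigma^*(\ts))$. Hence $\tchi_{\ts,\mu}^\sigma = \tchi_{\sigma^*(\ts),\nu}$ for some $\nu\in\Psi(\sigma^*(\ts))$, and the entire task is to identify $\nu$ with $\lc{\sigma^*}\mu$.

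The first step is to invoke the $\sigma$-equivariance of Jordan decomposition. Because $\Z(\wtbG)$ is connected and $\sigma$ commutes with $F$ as a bijective morphism of algebraic groups, the Jordan decomposition bijection is canonical enough to satisfy
\[ \fJ_{\sigma^*(\ts)}^{\wtG}(\psi^{\sigma^*}) = \fJ_{\ts}^{\wtG}(\psi)^\sigma \]
for every unipotent $\psi\in\cE(\C_{\wtG^*}(\ts)^F,1)$; this is a standard compatibility which for classical groups can be read off the explicit construction of $\fJ_\ts^{\wtG}$ (cf.\ \cite{GM20}). Granting this, it remains to show that $\psi_\mu^{\sigma^*} = \psi_{\lc{\sigma^*}\mu}$ inside $\C_{\wtG^*}(\sigma^*(\ts))$.

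For the second step, I would use the decomposition of Lemma~\ref{lem-tcent} to write $\C_{\wtG^*}(\ts)$ as a central product of the factors $\C_\Ga(\ts_\Ga)$ with $\Ga\in\scF_\xi$. The automorphism $\sigma^*$, acting entrywise by $(a_{ij})\mapsto(a_{ij}^{p^i})$, permutes these factors according to $\Ga\mapsto{^{\sigma^*}}\Ga$, sending $\C_\Ga(\ts_\Ga)$ isomorphically onto $\C_{{^{\sigma^*}}\Ga}(\sigma^*(\ts)_{{^{\sigma^*}}\Ga})$: by \eqref{equ:cen-Sp} both factors are the same classical group of the same rank over the same field. Since the labelling of unipotent characters of $\Sp_{2m}(q^d)$ by symbols of odd defect and of $\GL_m(\veps q^d)$ by partitions is intrinsic to the group (and, equivalently, since a field automorphism of such a group fixes each unipotent character), the character $\psi_{\mu_\Ga}$ on $\C_\Ga(\ts_\Ga)$ is carried by $\sigma^*$ to $\psi_{\mu_\Ga}$ on $\C_{{^{\sigma^*}}\Ga}(\sigma^*(\ts)_{{^{\sigma^*}}\Ga})$. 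Assembling these over all $\Ga$, the resulting label on $\C_{\wtG^*}(\sigma^*(\ts))$ is precisely $\lc{\sigma^*}\mu$, as defined just above the proposition.

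The one genuine subtlety is the invocation of $\sigma$-equivariance for $\fJ_\ts^{\wtG}$; once that is in hand, the rest of the argument amounts to checking that the action of $\sigma^*$ on semisimple parts and on unipotent labels decouples cleanly through the primary decomposition, which is routine given the explicit models of the factors $\C_\Ga(\ts_\Ga)$ in Lemma~\ref{lem-cen-ss-symp}.
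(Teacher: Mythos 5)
Your proposal takes the same route as the paper: transport the action of $\sigma$ across the Jordan decomposition and then track what happens to the semisimple and unipotent labels. The one substantive weak point is the justification for the equivariance
\[
\fJ_{\sigma^*(\ts)}^{\wtG}(\psi^{\sigma^*}) = \fJ_{\ts}^{\wtG}(\psi)^\sigma,
\]
which you describe as a ``standard compatibility which\ldots\ can be read off the explicit construction of $\fJ_\ts^{\wtG}$.'' That is not quite accurate: for groups with connected centre there is no elementary explicit model of $\fJ_\ts^{\wtG}$ from which $\sigma$-equivariance is visible; one needs Digne--Michel's uniqueness characterisation together with Cabanes--Sp\"ath's equivariance theorem. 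The paper cites precisely this, namely \cite[Thm.~3.1]{CS13}, and you should as well. (The paper also records the auxiliary fact that $\grp{\ts}$ and $\grp{\sigma^*(\ts)}$ are $\wtG^*$-conjugate, via a Lang--Steinberg argument using connectedness of $\C_{\wtG^*}(\ts)$, to pin down the semisimple label up to conjugacy; your appeal to duality glosses over this, though your subsequent explicit analysis of the primary decomposition effectively covers it.) Apart from the citation, your treatment of the unipotent side is actually more detailed than the paper's ``Then the assertion follows,'' and is correct: $\sigma^*$ permutes the factors $\C_\Ga(\ts_\Ga)$ according to $\Ga\mapsto{^{\sigma^*}}\Ga$ and acts as a field automorphism on each, hence fixes each unipotent character, so the label is carried to $\lc{\sigma^*}\mu$.
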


\begin{proof}
We first claim that $\grp{\ts}$ is conjugate to $\grp{\sigma^*(\ts)}$ in $\wtG^*$.
In fact, the proof is similar with \cite[Lemma~3.2]{LZ18}.
Let $\sigma=F_p^i$.
In $\wtbG^*$, $\sigma^*(\ts)$ is conjugate to $\ts^p$.
Since $\C_{\wtG^*}(\ts)$ is connected,
$\sigma^*(\ts)$ is conjugate to $\ts^p$ in $\wtG^*$ by \cite[Thm.~21.11]{MT11}.
Then the claim holds.
Recall from (\ref{equ:jor-dec-irr}) that $\tchi_{\ts,\mu}=\fJ_{\ts}^{\wtG}(\psi_\mu)$.
By \cite[Thm.~3.1]{CS13},
${\fJ_{\ts}^{\wtG}(\psi_\mu)}^\sigma=\fJ_{{\sigma^*}^{-1}(\ts)}^{\wtG}(\psi_\mu^{{\sigma^*}^{-1}})$.
Then the assertion follows.
\end{proof}

When considering the action of $\wtG\rtimes\grp{F_p}$ on $\Irr(G)$,
we have the following proposition.

\begin{thm}[{\cite[Thm. B]{CS19}}]\label{act-spin}
Let $\chi\in\Irr(G)$ and $\sigma\in\grp{F_p}$, $g\in\wtG\setminus G\Z(\wtG)$.
If $\chi^\sigma=\chi^g$, then $\chi^\sigma=\chi^g=\chi$.
\end{thm}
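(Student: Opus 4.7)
The plan is to reduce via Theorem~\ref{thm:char-spin} to the case where $\Res^{\wtG}_G\tchi$ is reducible and then use a $\langle F_p\rangle$-equivariant labeling of the two constituents.

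First I would fix some $\tchi=\tchi_{\ts,\mu}\in\Irr(\wtG\mid\chi)$ from~(\ref{equ:jor-dec-irr}). By Theorem~\ref{thm:char-spin}, either $\Res^{\wtG}_G\tchi$ is irreducible (Case~A) or $\Res^{\wtG}_G\tchi=\chi+\chi^{g_0}$ with $g_0\in\wtG\setminus G\Z(\wtG)$ (Case~B). In Case~A one has $\wtG_\chi=\wtG$, hence $\chi^g=\chi$, and the hypothesis immediately gives $\chi^\sigma=\chi^g=\chi$. So the only nontrivial case is Case~B, where $\wtG_\chi=G\Z(\wtG)$; since $g\in\wtG\setminus G\Z(\wtG)=\wtG\setminus\wtG_\chi$, one has $\chi^g\neq\chi$, and the hypothesis $\chi^\sigma=\chi^g$ asserts that $\sigma$ interchanges the two constituents $\chi,\chi^g$ of $\Res^{\wtG}_G\tchi$. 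My goal is to derive a contradiction.

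The strategy is to distinguish $\chi$ from $\chi^g$ by a $\langle F_p\rangle$-stable invariant attached to $\Z(G)$. Since $\Z(G)$ has order $2$, the group $G$ admits exactly two Gelfand--Graev characters $\Ga_1,\Ga_z$ indexed by $\Z(G)$, interchanged by the diagonal outer automorphism, and satisfying $\Res^{\wtG}_G\widetilde\Ga=\Ga_1+\Ga_z$ for the (essentially unique) Gelfand--Graev character $\widetilde\Ga$ of $\wtG$. When $\tchi$ is a regular character, this yields a canonical labeling by $\Z(G)$ of the two constituents of $\Res^{\wtG}_G\tchi$ via the Gelfand--Graev character in which each appears; for general reducible $\Res^{\wtG}_G\tchi$ one falls back on Kawanaka's generalized Gelfand--Graev representations, or equivalently on a refined Jordan decomposition using the action of $(\Z(G^*))^F$ on Lusztig series. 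Granting that this labeling is $\langle F_p\rangle$-equivariant, $\sigma$ must preserve each constituent, contradicting $\chi^\sigma=\chi^g\neq\chi$.

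The hard part will be precisely the $\langle F_p\rangle$-equivariance of the labeling. Each $\Ga_z$ depends on the choice of a regular linear character of the unipotent radical of an $F$-stable Borel subgroup, and one must check that this choice---and hence each individual $\Ga_z$---can be made $F_p$-stable. For $\wtG$ (connected centre) this is routine since there is only one Gelfand--Graev character; the refinement to $G$, which requires the $F_p$-action on pinnings and on generic characters to be controlled, is the technical heart of \cite[Thm.~B]{CS19} and is exactly what allows the labeling of the constituents of $\Res^{\wtG}_G\tchi$ to be transferred along $\sigma$.
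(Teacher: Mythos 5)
The paper does not actually prove this statement: Theorem~\ref{act-spin} is imported verbatim from \cite[Thm.~B]{CS19}, and no argument is given. So there is no ``paper's own proof'' to match your sketch against, only a citation. Your reduction is fine as far as it goes: Case~A (restriction irreducible) is trivial, and in Case~B you correctly observe that $\wtG_\chi=G\Z(\wtG)$, that $\chi^g\neq\chi$ for the given $g$, and that the content of the theorem is precisely that $\sigma$ cannot interchange the two constituents of $\Res^{\wtG}_G\tchi$.

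The problem is that everything you then do amounts to a restatement of what has to be proved. The sentence ``Granting that this labeling is $\langle F_p\rangle$-equivariant, $\sigma$ must preserve each constituent'' is not a proof step but exactly the assertion of \cite[Thm.~B]{CS19}; you say as much yourself at the end. Moreover, the transition from the regular case (where the two Gelfand--Graev characters $\Ga_1,\Ga_z$ indexed by $H^1(F,\Z(\bG))\cong\Z(G)$ do separate the constituents of a regular $\tchi$) to the general case is glossed over: for non-regular $\tchi$, Kawanaka's generalized Gelfand--Graev representations do not give a clean multiplicity-one labeling, and ``equivalently a refined Jordan decomposition'' is a deep equivalence in its own right (relying on Taylor's equivariance results for GGGRs under automorphisms), not a harmless alternative. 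So as written, the proposal identifies the right circle of ideas and the right bottleneck, but it does not close the gap---its central step is the theorem itself. If the intent was to give a self-contained proof, you would need to actually establish the $F_p$-equivariance of the labeling (via an $F_p$-stable choice of regular character of the unipotent radical, compatible with the chosen pinning, and then propagate through Harish-Chandra/Lusztig induction), which is the substantial technical content of~\cite{CS19}.
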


By Theorem \ref{basicset},
$\cE(\wtbG^F,\ell')$ and $\cE(\bG^F,\ell')$ are basic sets for $\wtbG^F$ and $\bG^F$ respectively.
Now we propose the following assumption.

\begin{amp}\label{uni-assumption}
\begin{enumerate}[\rm(i)]
	\item $\cE(\wtbG^F,\ell')$ is a unitriangular basic set for $\wtbG^F$.
	\item $\cE(\bG^F,\ell')$ is a unitriangular basic set for $\bG^F$.
\end{enumerate}
\end{amp}

Then we have the following corollary for the action of $\wtG\rtimes\grp{F_p}$ on $\IBr(G)$.

\begin{cor}\label{split-stabilizer}
Assume that Assumption \ref{uni-assumption} holds.
Let $\chi\in\IBr(G)$ and $\sigma\in\grp{F_p}$, $g\in\wtG\setminus G\Z(\wtG)$.
If $\chi^\sigma=\chi^g$, then $\chi^\sigma=\chi^g=\chi$.
\end{cor}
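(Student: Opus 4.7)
The plan is to transfer the statement from Brauer characters to ordinary characters via the unitriangular basic set $\cE(G,\ell')$ provided by Assumption~\ref{uni-assumption}(ii), and then appeal to Theorem~\ref{act-spin}, which is the exact analogue for $\Irr(G)$.

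First I would check that $\cE(G,\ell')$ is globally stable under $\wtG\rtimes\grp{F_p}$. This is immediate from the decomposition $\cE(G,\ell')=\bigcup_s \cE(G,s)$, where $s$ runs over $G^*$-classes of semisimple $\ell'$-elements of $G^*$: both the conjugation action of $\wtG$ (through duality) and the action of $F_p$ on $G^*$ permute such classes while preserving $\ell'$-order. Consequently the decomposition matrix with rows indexed by $\cE(G,\ell')$ and columns indexed by $\IBr(G)$ is $(\wtG\rtimes\grp{F_p})$-equivariant in the sense that its entries are preserved under simultaneous relabelling of rows and columns.

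Second, I would use the unitriangularity in Assumption~\ref{uni-assumption}(ii) to obtain an $(\wtG\rtimes\grp{F_p})$-equivariant basic-set bijection $\Omega:\IBr(G)\to\cE(G,\ell')$. For a total order on $\cE(G,\ell')$ built from automorphism-invariant data (coming, for instance, from the Jordan parameters together with the usual orderings on symbols and partitions), the unitriangular shape asserts that the restriction of each $\tilde\chi\in\cE(G,\ell')$ to $\ell$-regular elements writes uniquely as $\Omega^{-1}(\tilde\chi)$ plus a nonnegative integer combination of $\Omega^{-1}(\tilde\chi')$ with $\tilde\chi'<\tilde\chi$. Applying any element of $\wtG\rtimes\grp{F_p}$ and comparing with the analogous decomposition of the image of $\tilde\chi$ then forces $\Omega(\varphi^\alpha)=\Omega(\varphi)^\alpha$ for every $\varphi\in\IBr(G)$.

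Finally, suppose $\chi\in\IBr(G)$ satisfies $\chi^\sigma=\chi^g$ for some $\sigma\in\grp{F_p}$ and $g\in\wtG\setminus G\Z(\wtG)$. Setting $\tilde\chi=\Omega(\chi)\in\cE(G,\ell')\subseteq\Irr(G)$, equivariance of $\Omega$ gives $\tilde\chi^\sigma=\tilde\chi^g$. Theorem~\ref{act-spin} applied to $\tilde\chi$ then yields $\tilde\chi^\sigma=\tilde\chi^g=\tilde\chi$, and pulling back through $\Omega$ delivers $\chi^\sigma=\chi^g=\chi$. I expect the main obstacle to be establishing the equivariance of $\Omega$: while it is a formal consequence of the unitriangular shape, it requires the order on $\cE(G,\ell')$ underlying the unitriangularity to be chosen invariantly under all relevant automorphisms, a point that is standard but warrants a careful verification in the context of $G=\Spin_{2n+1}(q)$.
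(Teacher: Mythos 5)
Your approach matches the paper's proof exactly in outline: transfer the automorphism action from $\IBr(G)$ to the ordinary characters in the unitriangular basic set $\cE(G,\ell')$ guaranteed by Assumption~\ref{uni-assumption}(ii), then invoke Theorem~\ref{act-spin}. The paper carries out the transfer simply by citing \cite[Lemma 7.5]{CS13}, which is precisely the statement that a $(\wtG\rtimes\grp{F_p})$-stable unitriangular basic set yields a $(\wtG\rtimes\grp{F_p})$-equivariant bijection with $\IBr(G)$, so your in-house re-derivation is more than is needed; moreover, it has a small logical wrinkle, since a \emph{total} order on $\cE(G,\ell')$ cannot literally be invariant under a group acting with nontrivial orbits, so ``an order built from automorphism-invariant data'' does not produce what the argument requires. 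The correct mechanism, and what \cite[Lemma 7.5]{CS13} rests on, is that the bijection implicit in a unitriangular decomposition matrix is uniquely determined by the matrix alone (independent of the chosen ordering), hence automatically commutes with any group permuting the basic set.
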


\begin{proof}
By \cite[Lemma 7.5]{CS13},
we may transfer to a unitriangular basic set when considering the action of automorphisms on Brauer characters,
thus the assertion follows by Theorem \ref{act-spin}.
\end{proof}

\section{Radical subgroups and weights of $\SO_{2n+1}(q)$}\label{sec:SO}

In this section,
we recall constructions of weights of $\SO_{2n+1}(q)$ in \cite{An94},
and make a slight modification,
which is useful when considering weights of special Clifford groups and spin groups.

\subsection{Cores}\label{subsec:cores}

Keep the notation in \S \ref{sec:irr-cl}.
Let $\ell$ be an odd prime not dividing $q$ (with odd $q$) and let $e$ be the multiplicative order of $q^2$ modulo $\ell$.
Denote by $v$ the discrete valuation such that $v(\ell)=1$.
Let $\ell^a$ be the exact power of $\ell$ dividing $q^{2e}-1$ (\emph{i.e.}, $a=v(q^{2e}-1)$)
and $\veps\in\{\pm 1\}$  so that $\ell^a\mid q-\veps$.
Let $d=d(q,\ell)$ be defined as in (\ref{equ:def-d-ql}).
Then $e=d$ or $d/2$ according as $d$ is odd or even.
We call that $\ell$ is \emph{linear or unitary} according as $e=d$ or $e=d/2$.
Obviously, $\varepsilon=1$ or $-1$ according to $\ell$ is linear or unitary.

For $\xi\in\F_q$, define $e_\Ga$ for each $\Ga\in\scF_\xi$ as follows.
\begin{enumerate}[(1)]
		\item
	When $\Ga\in\scF_{\xi,1}\cup\scF_{\xi,2}$,
	let $e_\Ga$ be the multiplicative order of $\veps_\Ga q^{\de_\Ga}$ modulo $\ell$.
	\item
	When $\xi\in(\F_q^\ti)^2$ and $\Ga\in\scF_{\xi,0}$, let $e_\Ga=e$.
	\item
	When $\xi\notin(\F_q^\ti)^2$ and $\Ga\in\scF_{\xi,0}$,
	let $e_\Ga=e$ or $\frac{e}{2}$ according to $e$ is odd or even.
	Note that $e_\Ga$ is equal to the multiplicative order of $q^4$ modulo $\ell$;
	see Lemma \ref{lem-cen-ss-symp}.
\end{enumerate}
Also $e_\Ga$ is the multiplicative order of $q^{2d_\Ga}$ modulo $\ell$ if $\Ga\in\scF_{\xi,0}$.
For $\Ga \in \scF_1$, parts (1) and (3) is the same as in \cite{An94}.

We will make use of  the $e_\Ga$-cores of partitions (see \cite[\S 3]{Ol93}) and Lusztig symbols. 
For the definition of $e_\Ga$-cores of Lusztig symbols, we follow \cite[p.~307]{FS86} and \cite[p. 159]{FS89}, which is slightly different from those defined in \cite[\S 5]{Ol93}.
In particular, if $\mu_\Ga$ is a Lusztig symbol, then its $e_\Ga$-core
$\ka_\Ga$ is the Lusztig symbol which is gotten by actually removing $w_\Ga$ $e_\Ga$-hooks (resp. $e_\Ga$-cohooks) from $\mu_\Ga$ and there is no $e_\Ga$-hooks (resp. $e_\Ga$-cohooks) in $\ka_\Ga$ if $\ell$ is linear (resp. $\ell$ is unitary). 
We follow \cite{FS86,FS89} and say $e_\Ga$-cores for both $e_\Ga$-cores and $e_\Ga$-cocores in  \cite{Ol93}.

\subsection{} \label{subsec:4.2}
We first recall some constructions in \cite{An94}.

Let $\ga,\alp$ be non-negative integers.
Denote by $Z_\alp$ the cyclic group of order $\ell^{a+\alp}$
and by $E_\ga$ the extraspecial group of order $\ell^{2\ga+1}$.
An $\ell$-group of symplectic type means the central product $Z_\alp E_\ga$ of $Z_\alp$ and $E_\ga$ over $\Z(E_\gamma)$.
We may assume that $E_\ga$ is of exponent $\ell$
since the extraspecial group of exponent $\ell^2$ will not appear in radical subgroups by the results in \cite{An94}.

Let $m$ be a positive integer.
There is an embedding of $Z_\alp E_\ga$ in $\GL(m\ell^\ga,\veps q^{e\ell^\alp})$;
the images of $Z_\alp$, $E_\ga$, $Z_\alp E_\ga$ are denoted as $Z_{m,\alp,\ga}^0$, $E_{m,\alp,\ga}^0$, $R_{m,\alp,\ga}^0$ respectively;
see for example \cite[\S6.A]{Li19}.
The group $\GL(m\ell^\ga,\veps q^{e\ell^\alp})$ and thus the group $R_{m,\alp,\ga}^0$ can be embedded in the group $\SO^{\veps^m}(2me\ell^{\alp+\ga},q)$ (see \cite[\S2]{An94});
denote by $R_{m,\alp,\ga}$ the image of $R_{m,\alp,\ga}^0$ under this embedding.
For any sequence $\bc=(c_1,c_2,\dots,c_r)$,
let $A_{\bc} = A_{c_1} \wr A_{c_2} \wr \cdots A_{c_r}$ be as in \cite{AF90},
where $A_{c_i}$ is the elementary abelian group of order $\ell^{c_i}$.
Set $R_{m,\alp,\ga,\bc} = R_{m,\alp,\ga} \wr A_{\bc}$,
then $R_{m,\alp,\ga,\bc}$ is a subgroup of $\SO^{\veps^m}(2me\ell^{\alp+\ga+|\bc|},q)$,
where $|\bc|=c_1+c_2+\cdots+c_r$.
The subgroups of the form $R_{m,\alp,\ga,\bc}$ are called basic subgroups.

Since we are considering the odd prime $\ell$,
the radical subgroups of $\GO(V)$ are the same as those of $\SO(V)$.
By \cite{An94}, any radical subgroup $R$ of $\GO(V)$ is conjugate a product of some basic subgroups,
and there are corresponding decompositions
\begin{equation}\label{equ-rad-1}
R = R_0 \ti R_1 \ti \cdots \ti R_u,\quad V = V_0 \perp V_1 \perp \cdots \perp V_u,
\addtocounter{thm}{1}\tag{\thethm}
\end{equation}
where $R_0$ is the trivial group on $V_0$ and
$R_i=R_{m_i,\alp_i,\ga_i,\bc_i}$ is a basic subgroup on $V_i$ for each $i>0$.
Note that the type functions satisfy that $\eta(V_i)=\veps^{m_i}$ and $\eta(V)=\prod_{i=0}^u\eta(V_i)$ by \cite[(1.5)]{FS89} since the dimension of each $V_i$ ($i>0$) is even.
Set $V_+ = V_1 \perp \cdots \perp V_u$ and $R_+ = R_1 \ti \cdots \ti R_u$.

To classify the weights, one also needs to know the centralizers and normalizers of radical subgroups.
Denote by $C_{m,\alp,\ga,\bc}$ and $N_{m,\alp,\ga,\bc}$ the centralizer and normalizer of $R_{m,\alp,\ga,\bc}$ in $\GO^{\veps^m}(2me\ell^{\alp+\ga+|\bc|},q)$ respectively.
Then $C_{m,\alp,\ga,\bc}$ is contained in $\SO^{\veps^m}(2me\ell^{\alp+\ga+|\bc|},q)$,
but $N_{m,\alp,\ga,\bc}$ may not be contained in $\SO^{\veps^m}(2me\ell^{\alp+\ga+|\bc|},q)$.
Assume
\begin{equation}\label{equ-rad-2}
R = R_0 \ti \prod_{i=1}^u R_{m_i,\alp_i,\ga_i,\bc_i}^{t_i}, \quad V = V_0 \perp V_+,
\addtocounter{thm}{1}\tag{\thethm}
\end{equation}
is a radical subgroup of $\SO(V)$,
where $R_0$ is the trivial subgroup on $V_0$ and
$(m_i,\alp_i,\ga_i,\bc_i)\neq(m_j,\alp_j,\ga_j,\bc_j)$ for any $i \neq j$,
then the following decomposition of abstract groups
\[ \N_{\GO(V)}(R) = \GO(V_0) \ti \prod_{i=1}^u N_{m_i,\alp_i,\ga_i,\bc_i} \wr \fS(t_i) \]
corresponds to a decomposition of spaces similar as in (\ref{equ-rad-1}).
But the normalizer $\N_{\SO(V)}(R)$ of $R$ in $\SO(V)$ do not have such a decomposition.
To fill the gap, An \cite{An94} first classified the weights $\GO(V_+)$ afforded by $R_+$,
and then reduces the classification of weights of $\SO(V)$ afforded by $R$ to that of $\GO(V_+)$;
for details, see \cite[\S4]{An94}.

\subsection{}\label{subsect:Mod-SO}
For our purpose, we make a slight modification to the above constructions.
For a radical subgroup $R$ of $\SO(V)$ as in (\ref{equ-rad-1}),
we construct a twisted version $R^{tw}$ of $R$ as follows.

Keep the settings in \S\ref{notation-spaces-groups}.
Decompose $\bV$ as follows:
\[ \bV= \bV_0 \perp \bV_1 \perp \cdots \perp \bV_u,\quad \bV_i = \perp_{j=1}^{\ell^{|\bc_i|}} \bV_{ij}, \]
where $\dim\bV_i = 2m_ie\ell^{\alp_i+\ga_i+|\bc_i|}$ and $\dim \bV_{ij} = 2m_ie\ell^{\alp_i+\ga_i}$ for $i>0$.
Choose a basis for $\bV_0$ and each $\bV_{ij}$
such that the union of all these basis can be rearranged as the form (\ref{equ-basis}).
We also denote $\bV_+ = \bV_1 \perp \cdots \perp \bV_u$.

Assume $i>0$.
Let the embedding $R_{m_i,\alp_i,\ga_i}^0$ of $Z_{\alp_i} E_{\ga_i}$ in $\GL(m_i\ell^{\ga_i},\veps q^{e\ell^{\alp_i}})$ be as in \cite[\S6.A]{Li19}.
Similar as in \cite[\S6.A]{Li19}, there is a hyperbolic embedding:
\begin{align*}
\hbar:\quad &\GL(m_i\ell^{\ga_i},\veps q^{e\ell^{\alp_i}}) \to \SO(\bV_{ij}) \\
A &\mapsto \diag\{ A,F(A),\dots,F^{e\ell^{\alp_i}-1}(A),A^{-t},F(A^{-t}),\dots,F^{e\ell^{\alp_i}-1}(A^{-t}) \},
\end{align*}
where $A^{-t}$ denotes the inverse of the transposition of $A$.
Note that when $\veps=-1$, $A^{-t} = F^{e\ell^{\alp_i}}(A)$.
Denote by $R_{m_i,\alp_i,\ga_i,j}^{tw}$ the image of $\hbar(R_{m_i,\alp_i,\ga_i}^0)$
under the natural embedding $\SO(\bV_{ij}) \to \SO(\bV)$.
For the sequence $\bc_i$, we can view $A_{\bc_i}$ as a subgroup of $W_1$ in \S\ref{subsect:Weyl-D0}.
Set
\[ R_{m_i,\alp_i,\ga_i,\bc_i}^{tw}
= \left( \prod_{j=1}^{\ell^{|\bc_i|}} R_{m_i,\alp_i,\ga_i,j}^{tw} \right) \rtimes A_{\bc_i}; \]
of course, $R_{m_i,\alp_i,\ga_i,\bc_i}^{tw} \cong R_{m_i,\alp_i,\ga_i} \wr A_{\bc_i}$.
Set
\begin{equation}\label{equ-rad-3}
R^{tw} = \prod_{i=1}^u R_i^{tw},\quad R_i^{tw} = R_{m_i,\alp_i,\ga_i,\bc_i}^{tw}.
\addtocounter{thm}{1}\tag{\thethm}
\end{equation}
Here, since each $ R_i^{tw}$ is already a subgroup of $\SO(\bV)$,
there is no need to include $R_0^{tw}$ in the decomposition.
If $\ell$ is linear, let $v_{m_i,\alp_i,\ga_i,\bc_i}$ be
\[ \id_{\bV_0+\cdots+\bV_{i-1}} \ti
\left( I_{m_i\ell^{\ga_i}} \otimes
\diag\left\{ \begin{bmatrix}0&1\\I_{e\ell^{\alp_i}-1}&0\end{bmatrix},\begin{bmatrix}0&1\\I_{e\ell^{\alp_i}-1}&0\end{bmatrix} \right\} \otimes I_{\ell^{|\bc_i|}} \right)
\ti \id_{\bV_{i+1}+\cdots+\bV_u}, \]
while if $\ell$ is unitary, let $v_{m_i,\alp_i,\ga_i,\bc_i}$ be
\[ (-1)^{m_i}\id_{\bV} \cdot \left(
\id_{\bV_0+\cdots+\bV_{i-1}} \ti
\left( I_{m_i\ell^{\ga_i}} \otimes \begin{bmatrix}0&1\\I_{2e\ell^{\alp_i}-1}&0\end{bmatrix} \otimes I_{\ell^{|\bc_i|}} \right) \ti \id_{\bV_{i+1}+\cdots+\bV_u} \right). \]
Note that $\det v_{m_i,\alp_i,\ga_i,\bc_i} =1$ since $\dim\bV$ is odd.
Set $v = \prod_{i=1}^u v_{m_i,\alp_i,\ga_i,\bc_i}$, then $v \in \SO(V)$.
It is easy to see that $R^{tw} \leq \SO(\bV)^{vF}$.

By Lang--Steinberg theorem,
there is $g \in \SO(\bV)$ such that $g^{-1}F(g)=v$.
Then there is an isomorphism
\begin{align*}
\iota:\quad \SO(\bV)^{vF}
&\to \SO(\bV)^F=H \\
x &\mapsto gxg^{-1}.
\end{align*}
By the construction and results in \cite{An94},
$\iota(R^{tw})$ is conjugate to the radical subgroup $R$ of $\SO(V)$ as in (\ref{equ-rad-1}).
Via the isomorphism $\iota$, we can transfer the problems to the twisted group $\SO(\bV)^{vF}$.
Thus we call $R_{m_i,\alp_i,\ga_i,\bc_i}^{tw}$ a twisted basic subgroup of $\SO(V)$.

\begin{rem}
The point of constructions above is that
we view each component $R_i^{tw}$ of $R^{tw}$ in (\ref{equ-rad-3}) as a subgroup of $\SO(\bV)^{vF}$ instead of a subgroup of $\SO^{\veps^{m_i}}(2m_ie\ell^{\alp_i+\ga_i+|\bc_i|},q)$ as in \cite{An94}.
Then we can multiply some elements by $-\id_{\bV}$ to adjust their determinants to be $1$;
see the definition of $v_{m_i,\alp_i,\ga_i,\bc_i}$ above when $\ell$ is unitary.
So we can avoid to involve the general orthogonal group $\GO(V_+)$ as in \cite{An94}.
See \S\ref{subsect:CN-SO} for the consideration of the normalizers of radical subgroups.
This is convenience when considering the Clifford group,
since the map $\pi: D(V) \to \GO(V)$ is not surjective when $\dim V$ is odd.
Also, the nature of multiplication by $-\id_{\bV}$ can be explained by the minus symbol in
\[ \pi(z_\bV x) = -\varrho_x \]
for any non-isotropic vector $x$ in $\bV$,
where $z_\bV$ is as in \S\ref{subsect:Clifford}.
We make the following convention.
\begin{center}
\emph{We will always work in the twisted group $\SO(\bV)^{vF}$\\
and ``tw'' will be dropped in the sequel unless otherwise stated. }
\end{center}
Additionally, all the subgroups will means subgroups of $\SO(\bV)$ and all the matrices involved mean elements in $\SO_{2n+1}(\OF_q)$ with an appropriate chosen basis of $\bV$.
For example, when we want to focus on one component, we will write $R_{m_i,\alp_i,\ga_i,\bc_i}$ as $R_{m,\alp,\ga,\bc}$, which is still viewed as a subgroup of $\SO(\bV)$.
Similar conventions will be used for all related constructions.
For example, when $\ell$ is unitary,
$v_{m,\alp,\ga,\bc}$ above will be abbreviated as
\[ (-1)^m\id_{\bV} \cdot
\left( I_{m\ell^\ga} \otimes \begin{bmatrix}0&1\\I_{2e\ell^\alp-1}&0\end{bmatrix} \otimes I_{\ell^{|\bc|}} \right). \]
Note that our method here does not apply for the special orthogonal groups of even dimension, in which case, the method in \cite{An94} to introduce $\GO(V_+)$ seems inevitable.
Also, our modification is based on the classification of radical subgroups achieved already in \cite{An94}.
\end{rem}

\subsection{Centralizers and normalizers}\label{subsect:CN-SO}
Denote by $C_{m,\alp,\ga}^0$ and $N_{m,\alp,\ga}^0$
the centralizer and normalizer of $R_{m,\alp,\ga}^0$ in $\GL(m\ell^\ga,\veps q^{e\ell^\alp})$ respectively.
By \cite[\S3.A]{FLZ20a}, $C_{m,\alp,\ga}^0$ and $N_{m,\alp,\ga}^0$ are as follows.
\begin{enumerate}[(1)]
\item
$C_{m,\alp,\ga}^0 = \GL(m,\veps q^{e\ell^\alp}) \otimes I_{\ell^\ga}$;
\item
$N_{m,\alp,\ga}^0 = C_{m,\alp,\ga}^0M_{m,\alp,\ga}^0$
is the central product of $C_{m,\alp,\ga}^0$ and $M_{m,\alp,\ga}^0$
over $C_{m,\alp,\ga}^0 \cap M_{m,\alp,\ga}^0 = \Z(E_{m,\alp,\ga}^0)$
and $M_{m,\alp,\ga}^0/\Z(E_{m,\alp,\ga}^0) \cong \Sp(2\ga,\ell)$.
\end{enumerate}
These results are essentially contained in \cite{An94}
except that part (2) improves the corresponding result in \cite{An94} slightly.

Set $C_{m,\alp,\ga} = \hbar(C_{m,\alp,\ga}^0)$
and $C_{m,\alp,\ga,\bc} = C_{m,\alp,\ga} \otimes I_{\ell^{|\bc|}}$.
When $\ell$ is linear,
set $V_{m,\alp,\ga} = \grp{v_{m,\alp,\ga},\de_{m,\alp,\ga}}$,
where $v_{m,\alp,\ga}$ is as in \S\ref{subsect:Mod-SO} and
\[ \de_{m,\alp,\ga}= (-1)^m\id_{\bV} \cdot \left( I_{m\ell^\ga} \otimes
\begin{bmatrix}0&I_{e\ell^\alp}\\I_{e\ell^\alp}&0\end{bmatrix} \right). \]
When $\ell$ is unitary,
set $V_{m,\alp,\ga} = \grp{v_{m,\alp,\ga}}$,
where $v_{m,\alp,\ga}$ is again as in \S\ref{subsect:Mod-SO}.
Note that $V_{m,\alp,\ga}$ is cyclic in both cases.
Set \[ N_{m,\alp,\ga} = \hbar(N_{m,\alp,\ga}^0)V_{m,\alp,\ga}
= C_{m,\alp,\ga} M_{m,\alp,\ga} V_{m,\alp,\ga}, \]
where $M_{m,\alp,\ga} = \hbar(M_{m,\alp,\ga}^0)$.
Set \[ N_{m,\alp,\ga,\bc}
= N_{m,\alp,\ga}/R_{m,\alp,\ga} \otimes \N_{\fS(\ell^{|\bc|})}(A_{\bc}), \]
where $\otimes$ is as in \cite[(1.5)]{AF90},
but with the sign convention in \S\ref{subsect:Mod-SO}, then
\[ N_{m,\alp,\ga,\bc}/R_{m,\alp,\ga,\bc} \cong
N_{m,\alp,\ga}/R_{m,\alp,\ga} \times \N_{\fS(\ell^{|\bc|})}(A_{\bc})/A_{\bc}, \]
where $\N_{\fS(\ell^{|\bc|})}(A_{\bc})/A_{\bc} \cong \GL(c_1,\ell) \ti \GL(c_2,\ell) \ti \cdots \ti \GL(c_r,\ell)$.

We rearrange the components of the (twisted) radical subgroup $R$ of $\SO(\bV)^{vF}$ in (\ref{equ-rad-3}) such that
\begin{equation}\label{equ-rad-4}
R = \prod_{i=1}^u R_{m_i,\alp_i,\ga_i,\bc_i}^{t_i},
\addtocounter{thm}{1}\tag{\thethm}
\end{equation}
where $(m_i,\alp_i,\ga_i,\bc_i)\neq(m_j,\alp_j,\ga_j,\bc_j)$ for any $i \neq j$.
Then we have as abstract groups that
\begin{align*}
C &:= \C_{\SO(\bV)}(R)^{vF} \cong
\SO(\bV_0)^F \ti \prod_{i=1}^u C_{m_i,\alp_i,\ga_i,\bc_i}^{t_i}, \\
N &:= \N_{\SO(\bV)}(R)^{vF}
\cong \SO(\bV_0)^F \ti \prod_{i=1}^u N_{m_i,\alp_i,\ga_i,\bc_i} \wr \fS(t_i)
\end{align*}

All the above results follows from results in \cite{An94},
except that $\det N_{m_i,\alp_i,\ga_i,\bc_i} = 1$ always holds by our construction.
Note also that the groups $\N_{\fS(\ell^{|\bc|})}(A_{\bc})$
and $\fS(t_i)$ above are subgroups of $W_1$ in \S\ref{subsect:Weyl-D0} for a suitable chosen basis.

\subsection{Weights of $\SO_{2n+1}(V)$}\label{weights-SO}
With the results above,
the weights of $\SO(V)$ can be constructed in the same way as in \cite{An94},
which we recall as follows using the similar notation as in \cite{LZ18},
since this process will be used later.
Let $\scF'_1$ be the subsets of all polynomials in $\scF_1$  whose roots are of $\ell'$-order.

Recall that $C_{m,\alp,\ga}/Z_{m,\alp,\ga} \cong \GL(m,\veps q^{e\ell^\alp})/\Z(\GL(m,\veps q^{e\ell^\alp}))$,
where $Z_{m,\alp,\ga} = \Z(R_{m,\alp,\ga})$.
By \cite[\S4]{FS82}, the set $\dz(C_{m,\alp,\ga}/Z_{m,\alp,\ga})$ is not empty if and only if $\ell\nmid m$,
in which case, any irreducible characters of defect zero of $C_{m,\alp,\ga}/Z_{m,\alp,\ga}$ is determined by a polynomial $\Delta\in\cE'_\alp$ with $d_\Delta=m$.
Let $\Ga = \cN_\alp(\Delta) \in \scF_1'$
(see \S\ref{prelim-weights} below for the definitions of $\cE'_\alp$ and $\cN_\alp$),
then $m,\alp$ are determined by $\Ga$ via $me\ell^\alp = e_\Ga \de_\Ga$
and are denoted as $m_\Ga$ and $\alp_\Ga$ respectively.
Denote $R_{m_\Ga,\alp_\Ga,\ga}$, $C_{m_\Ga,\alp_\Ga,\ga}$, $N_{m_\Ga,\alp_\Ga,\ga}$ as $R_{\Ga,\ga}$, $C_{\Ga,\ga}$, $N_{\Ga,\ga}$ respectively; similar convention will be used for other related constructions.
Denote the corresponding character of $\GL(m_\Ga,\veps q^{e\ell^{\alp_\Ga}})$ by $\theta_\Ga^0$
and the induced character of $C_{\Ga,\ga}$ by $\theta_{\Ga,\ga} = \hbar(\theta_\Ga^0 \otimes I_{\ell^\ga})$.
By \cite[\S3]{An94}, $\Ga$ determines $\theta_{\Ga,\ga}$ up to $N_{\Ga,\ga}$-conjugacy;
in the sequel, for each $\Ga$, a representative $\theta_{\Ga,\ga}$ of the $N_{\Ga,\ga}$-conjugacy class is fixed.
Obviously, $\hbar(N_{\Ga,\ga}^0) = C_{\Ga,\ga}M_{\Ga,\ga}$ stabilizes $\theta_{\Ga,\ga}$,
and $|N_{\Ga,\ga}(\theta_{\Ga,\ga}):\hbar(N_{\Ga,\ga}^0)| = \beta_\Ga e_\Ga$ by results in \cite[\S3]{An94}.

Let $M_{\Ga,\ga}^0$ be as in \S\ref{subsect:CN-SO}.
Note that $M_{\Ga,\ga}^0/\Z(E_{\Ga,\ga}^0) \cong \Sp(2\ga,\ell)$ has a unique irreducible character of defect zero, namely the Steinberg character, denoted as $\St_\ga$;
use also $\St_\ga$ to denote its inflation to $M_{\Ga,\ga}^0$ and the induced character of $M_{\Ga,\ga}$.
Let $\vartheta_{\Ga,\ga} = \theta_{\Ga,\ga}\St_\ga$,
then $\dz(C_{\Ga,\ga}M_{\Ga,\ga}/R_{\Ga,\ga} \mid \theta_\Ga) = \set{\vartheta_{\Ga,\ga}}$ and $N_{\Ga,\ga}(\theta_{\Ga,\ga}) = N_{\Ga,\ga}(\vartheta_{\Ga,\ga})$.
Finally $|\Irr(N_{\Ga,\ga}(\theta_{\Ga,\ga}) \mid \theta_{\Ga,\ga})| = \beta_\Ga e_\Ga$.
See \cite[\S3]{An94} for details.

Let $\cR_{\Ga,\de}$ be the set of all basic subgroups of the form $R_{\Ga,\ga,\bc}$ with $\ga+|\bc|=\de$.
Label the basic subgroups in $\cR_{\Ga,\de}$ as $R_{\Ga,\de,1},R_{\Ga,\de,2},\dots$,
denote the corresponding canonical character $\theta_\Ga \otimes I_{\ell^\de}$ as $\theta_{\Ga,\de,i}$,
and use this convention for all related constructions.
When $\Ga,\Ga' \in \scF_1'$ are such that $m_\Ga=m_{\Ga'}$ and $\alp_\Ga=\alp_{\Ga'}$,
label the basic subgroups in $\cR_{\Ga,\de}$ and $\cR_{\Ga',\de}$ such that $R_{\Ga,\de,i}=R_{\Ga',\de,i}$.
Set \[ \sC_{\Ga,\de} = \bigcup_i
\dz( N_{\Ga,\de,i}(\theta_{\Ga,\de,i})/R_{\Ga,\de,i} \mid \theta_{\Ga,\de,i} ). \]
By \cite[(4A)]{An94}, $|\sC_{\Ga,\de}| = \beta_\Ga e_\Ga \ell^\de$.
Assume $\sC_{\Ga,\de} = \set{\psi_{\Ga,\de,i,j}}$,
where $\psi_{\Ga,\de,i,j}$ is a character of $N_{\Ga,\de,i}(\theta_{\Ga,\de,i})$.

Recall that $e_\Ga=e$ if $\Ga\in\scF_{1,0}$ and $e_\Ga$ is the multiplicative order of $\veps_\Ga q^{\de_\Ga}$ modulo $\ell$ if $\Ga\in\scF_1\setminus \scF_{1,0}$.
Let $i\Alp(H)$ be the set of all $H^*$-conjugacy classes of triples $(s,\ka,K)$ such that
\begin{enumerate}[(1)]
\item
$s$ is a semisimple $\ell'$-elements of $H^*$;
\item
$\ka = \prod_\Ga \ka_\Ga$,
where $\ka_\Ga$ is the $e_\Ga$-core of some partition of $m_\Ga(s)$ for $\Ga\in\scF_1\setminus\scF_{1,0}$
and is the $e$-core of some Lusztig symbol of rank $\frac{m_\Ga(s)}{2}$ of odd defect for $\Ga\in\scF_{1,0}$;
\item
$K = \prod_\Ga K_\Ga$,
where $K_\Ga: \bigcup_\de \sC_{\Ga,\de} \to \set{\ell\textrm{-cores}}$ satisfying
$\sum_{\de,i,j} \ell^\de |K_\Ga(\psi_{\Ga,\de,i,j})| = \omega_\Ga$
with $\omega_\Ga$ determined by
\begin{enumerate}[(i)]
\item $m_\Ga(s) = |\ka_\Ga|+e_\Ga\omega_\Ga$ if $\Ga\in\scF_1\setminus\scF_{1,0}$;
\item $m_\Ga(s) = 2\rk\ka_\Ga + 2e\omega_\Ga$ if $\Ga\in\scF_{1,0}$.
\end{enumerate}
\end{enumerate}

Let $(R,\vph)$ be a weight of $H$ with $\vph$ lying over a canonical character $\theta$ of $C$.
There are corresponding decompositions of $C$ and $N$ as follows
\[ C = C_0 \ti C_+,\quad N = N_0 \ti N_+, \]
where $C_0=N_0=\SO(\bV_0)^F$.
Then $\theta$ and $\vph$ can be decomposed as
$\theta = \theta_0 \ti \theta_+$ and $\vph = \vph_0 \ti \vph_+$,
where $\theta_0=\vph_0 \in \dz(\SO(\bV_0)^F)$ and $\vph_+ \in \Irr(N_+\mid\theta_+)$.
Assume $\vph_0 = \chi_{s_0,\ka}$,
where $s_0$ is a semisimple $\ell'$-element of $\Sp(\bV_0^*)^F$,
$\ka=\prod_\Gamma\ka_\Gamma$ with $\ka_\Ga$ an $e_\Ga$-core for each $\Ga\in\scF'_1$.
Using the notation above, there is a decomposition
\[ \theta_+ = \prod_{\Ga,\de,i} \theta_{\Ga,\de,i}^{t_{\Ga,\de,i}}. \]
Let $s=s_0\prod_{\Ga,\de,i} s_{\Ga,\de,i}$,
where $s_{\Ga,\de,i}$ is a primary semisimple element of $\Sp(\bV_{\Ga,\de,i}^*)^{vF}$.
Then $N_+(\theta_+)$ has a corresponding decomposition
\[ N_+(\theta_+) = \prod_{\Ga,\de,i} N_{\Ga,\de,i}(\theta_{\Ga,\de,i})\wr\fS(t_{\Ga,\de,i}). \]
Then as in \cite[p.145]{LZ18},
$\vph_+$ defines a $K$ as above and
the label $(s,\ka,K)$ in $i\Alp(H)$ can be given to the weight $(R,\vph)$.

\section{Weights for special Clifford groups and spin groups} \label{sec:weights}

In this section, we classify the weights of special Clifford groups and spin groups.

\subsection{Radical subgroups of special Clifford groups}\label{subsect:radical-D0}
We start with an obvious lemma.

\begin{lem}\label{lem-radical-D0}
Let $\pi$ be the natural surjective map $\wtG \to H$.
Then the map
\[ \Rad(\wtG) \to \Rad(H),\quad \wtR \mapsto \pi(\wtR) \]
is a bijection with the inverse
\[ \Rad(H) \to \Rad(\wtG),\quad R \mapsto \cO_\ell(\pi^{-1}(R)), \]
which induces a bijection between conjugacy classes of radical subgroups.
Furthermore, if $\wtR \in \Rad(\wtG)$ and $R=\pi(\wtR)$, then $\N_H(R) = \pi(\N_{\wtG}(\wtR))$ and $\N_{\wtG}(\wtR) = \pi^{-1}(\N_H(R))$.
\end{lem}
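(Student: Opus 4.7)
The proof plan rests on one structural observation: since $\ker\pi=\Z(\wtG)=\F_q^\ti\fe$ is central in $\wtG$, for any $\ell$-subgroup $S\le H$ the preimage $\pi^{-1}(S)$ is a central extension of $S$ by $\Z(\wtG)$, so it has a \emph{unique} Sylow $\ell$-subgroup, namely $\cO_\ell(\pi^{-1}(S))$, which contains $\Z(\wtG)_\ell$ and projects onto $S$. This single fact drives everything.

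First I would verify that every $\ell$-radical $\wtR\le\wtG$ contains $\Z(\wtG)_\ell$ (since $\Z(\wtG)_\ell$ is a normal $\ell$-subgroup of $\N_{\wtG}(\wtR)$) and therefore, by the observation above applied to $S=\pi(\wtR)$, that $\wtR$ is itself the unique Sylow $\ell$-subgroup of $\wtR\cdot\Z(\wtG)=\pi^{-1}(\pi(\wtR))$. From this I would deduce the normalizer identity $\N_{\wtG}(\wtR)=\pi^{-1}(\N_H(\pi(\wtR)))$: the inclusion $\supseteq$ holds because any element normalizing $\pi^{-1}(\pi(\wtR))$ must normalize its unique Sylow $\ell$-subgroup, and the inclusion $\subseteq$ is immediate by applying $\pi$. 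Then $\pi(\wtR)$ is shown to be radical in $H$ by checking $\cO_\ell(\N_H(\pi(\wtR)))\subseteq\pi(\wtR)$: its preimage in $\N_{\wtG}(\wtR)$ is a normal subgroup whose Sylow $\ell$-subgroup is characteristic, hence lies in $\cO_\ell(\N_{\wtG}(\wtR))=\wtR$.

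For the reverse direction, given $R\in\Rad(H)$ I would set $\wtR=\cO_\ell(\pi^{-1}(R))$, which by the observation is the unique Sylow $\ell$-subgroup of $\pi^{-1}(R)$. Writing any $r\in R$ as $\pi(\tilde r)$ with $\tilde r\in\pi^{-1}(R)$ and decomposing $\tilde r=\tilde r_\ell\tilde r_{\ell'}$, the $\ell$-part $\tilde r_\ell$ lies in $\wtR$ and maps to $r_\ell=r$, showing $\pi(\wtR)=R$. The same normalizer argument as before gives $\N_{\wtG}(\wtR)=\pi^{-1}(\N_H(R))$, and then $\wtR$ is radical in $\wtG$ because any normal $\ell$-subgroup of $\N_{\wtG}(\wtR)$ projects to a normal $\ell$-subgroup of $\N_H(R)=\N_H(R)$, hence lies in $\pi^{-1}(R)$ and in its unique Sylow $\ell$-subgroup $\wtR$. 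The two constructions are mutually inverse by construction.

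Finally, $\pi$-equivariance and surjectivity of $\pi$ with central kernel immediately translate $\wtG$-conjugacy to $H$-conjugacy, yielding the bijection on conjugacy classes; the normalizer formulas $\N_H(R)=\pi(\N_{\wtG}(\wtR))$ and $\N_{\wtG}(\wtR)=\pi^{-1}(\N_H(R))$ are then just restatements of what has been proved. There is no real obstacle here beyond being careful that $\ell$ may divide $|\Z(\wtG)|=q-1$, so that $\Z(\wtG)_\ell$ is nontrivial and must be shown to be automatically absorbed into every radical subgroup.
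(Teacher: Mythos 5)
Your proof is correct. The paper states the lemma without proof (``We start with an obvious lemma''), and your argument fills in exactly the intended details: the key structural fact is that $\ker\pi=\Z(\wtG)$ is central and cyclic, so for any $\ell$-subgroup $S\le H$ the preimage $\pi^{-1}(S)$ splits as $P\times\Z(\wtG)_{\ell'}$ with $P$ its unique (hence characteristic) Sylow $\ell$-subgroup, and from this the well-definedness of both maps, their mutual inversity, and the two normalizer identities all follow cleanly. Your care in noting that $\Z(\wtG)_\ell$ may be nontrivial and is automatically absorbed into every radical subgroup is precisely the point that makes the argument go through when $\ell\mid q-1$.
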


Let $R^{tw} \leq \SO(\bV)^{vF}$ be a twisted radical subgroup of $H$ as in (\ref{equ-rad-3}).
Let $\tv_{m_i,\alp_i,\ga_i,\bc_i}$ be an element in $\wtW_0\rtimes\wtW_1$
such that $\pi(\tv_{m_i,\alp_i,\ga_i,\bc_i}) = v_{m_i,\alp_i,\ga_i,\bc_i}$
(when $\ell$ is unitary, note that
\begin{equation}\label{equ-v-W-unitary}
\begin{bmatrix}0&1\\I_{2e\ell^{\alp_i}-1}&0\end{bmatrix}=
\diag\left\{ \begin{bmatrix}0&1\\I_{e\ell^{\alp_i}-1}&0\end{bmatrix},\begin{bmatrix}0&1\\I_{e\ell^{\alp_i}-1}&0\end{bmatrix} \right\}
\cdot\begin{bmatrix} I_{e\ell^{\alp_i}-1}&0&0\\ 0&0&0&1\\ 0&0&I_{e\ell^{\alp_i}-1}&0\\ 0&1&0&0\end{bmatrix},
\addtocounter{thm}{1}\tag{\thethm}
\end{equation}
thus $v_{m_i,\alp_i,\alp_i,\bc_i} \in W_0 \rtimes W_1$).
Set $\tv=\prod_i\tv_{m_i,\alp_i,\ga_i,\bc_i}$,
then $\pi:\, D_0(\bV) \to \SO(\bV)$ induces an exact sequence
(note that $\Z(D_0(\bV))$ is connected)
\[ 1 \to \F_q^\ti\fe \longrightarrow D_0(\bV)^{\tv F} \xrightarrow{\pi} \SO(\bV)^{vF} \to 1. \]
Let $\wtR^{tw} = \cO_\ell(\pi^{-1}(R^{tw}))$,
then $\wtR^{tw}$ is the twisted version of a radical subgroup $\wtR$ of $\wtG$ by Lemma \ref{lem-radical-D0}.
As in \S\ref{sec:SO}, we make the following convention.
\begin{center}
\emph{We will always work in the twisted groups $D_0(\bV)^{\tv F}$\\
and $\SO(\bV)^{vF}$ and ``tw'' will be dropped in the sequel unless otherwise stated.}
\end{center}

Recall that each component $R_{m_i,\alp_i,\ga_i,\bc_i}$ is viewed as a subgroup of $\SO(\bV)^{vF}$.
Let $\wtR_{m_i,\alp_i,\ga_i,\bc_i} = \cO_\ell(\pi^{-1}(R_{m_i,\alp_i,\ga_i,\bc_i}))$,
then $\wtR_{m_i,\alp_i,\ga_i,\bc_i} = \wtR_{m_i,\alp_i,\ga_i} \wr A_{\bc_i}$,
where $A_{\bc_i}$ is a subgroup of $\wtW_1$.
Then $\wtR = \prod_i \wtR_i$ is a central product over $\cO_\ell\left(\F_q^\ti \fe\right)$,
where $\wtR_i = \wtR_{m_i,\alp_i,\ga_i,\bc_i}$,
and for each $i$ there is an exact sequence
\[ 1 \to \cO_\ell\left(\F_q^\ti \fe\right) \rightarrow \wtR_i \xrightarrow{\pi} R_i \to 1,  \]
Set $Z_i = Z_{m_i,\alp_i,\ga_i}$, then $Z = \Z(R) = \prod_i Z_i$.
Set $\wtZ_i = \wtZ_{m_i,\alp_i,\ga_i} = \cO_\ell(\pi^{-1}(Z_i))$,
then $\wtZ := \cO_\ell(\pi^{-1}(Z)) = \prod_i \wtZ_i$
is a central product over $\cO_\ell\left(\F_q^\ti \fe\right)$.

\subsection{}\label{prelim-weights}
To continue, we give some preliminary.

\begin{lem}\label{lem-defect0}
Let $\tchi_{\ts,\la}$ be an irreducible ordinary character of $\wtG$.
Then $\tchi_{\ts,\la}$ is the canonical character in a block of $\wtG$ with the defect group $\cO_\ell(\Z(\wtG))$
if and only if $\ts$ is a semisimple $\ell'$-element of $\wtG^*$ and $\la_\Ga$ is an $e_\Ga$-core for each $\Ga$.
\end{lem}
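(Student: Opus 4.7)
The plan is to combine the Cabanes--Enguehard/Kessar--Malle classification of $\ell$-blocks of $\wtG$ recalled in \S\ref{subsec:class-blocks} with the combinatorial Jordan decomposition of \S\ref{subsec:irr-char}. In our setting every component of $\wtbG$ is of classical type and $\ell$ is odd, so the hypothesis of \cite[Thm.~A]{KM15} holds and the $\ell$-blocks of $\wtG$ are in bijection with $\wtG^*$-classes of $d$-Jordan cuspidal pairs $(\wtbL,\zeta)$ with $\zeta\in\cE(\wtbL^F,\ell')$.

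First I would identify, for a given character $\tchi_{\ts,\la}=\fJ^{\wtG}_\ts(\psi_\la)$, the associated $d$-Jordan cuspidal pair. Writing $\ts=\ts_{\ell'}\ts_\ell$, the block is $b_{\wtG^F}(\wtbL,\zeta)$ where $\wtbL$ is dual to $\C_{\wtbG^*}(\Z^\circ(\C^\circ_{\wtbG^*}(\ts_{\ell'}))_d)$ and $\zeta$ is built, via its own Jordan decomposition, from $\ts_{\ell'}$ and the $e$-cuspidal unipotent constituent underlying $\psi_\la$. The defect group of $b_{\wtG^F}(\wtbL,\zeta)$ is then a Sylow $\ell$-subgroup of $\wtbL^F$ (up to conjugacy), and for it to equal $\cO_\ell(\Z(\wtG))$ two things must happen: the Levi $\wtbL$ must have no $\Phi_d$-contribution beyond the center of $\wtbG$, and $\ts_\ell$ must be trivial (otherwise the $\ell$-part of $\ts$ would enlarge the defect group).

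Next I would unpack these two conditions on the combinatorial side using Lemma~\ref{lem-cen-ss-symp}. For $\Gamma\in\scF_{\xi,0}$ the $\Gamma$-component of $\C_{\wtbG^*}(\ts)$ is $\Sp_{m_\Gamma(\ts)}(q^{d_\Gamma})$ whose $d$-cuspidal unipotent characters are labelled by $e_\Gamma$-cores of Lusztig symbols; for $\Gamma\in\scF_{\xi,1}\cup\scF_{\xi,2}$ the component is $\GL_{m_\Gamma(\ts)}(\veps_\Gamma q^{\de_\Gamma})$ whose $d$-cuspidal unipotent characters are labelled by $e_\Gamma$-cores of partitions. The normalization of $e_\Gamma$ in \S\ref{subsec:cores} is chosen precisely so that both cases match uniformly. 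Thus the condition that the $\Phi_d$-torus of $\wtbL$ equals $\Z(\wtbG)_d$ (equivalently, that $\psi_\la$ is $d$-cuspidal in $\C_{\wtbG^*}(\ts)^F$) is equivalent to $\la_\Gamma$ being an $e_\Gamma$-core for every $\Gamma$, while the integrality of the defect group forces $\ts$ itself to be an $\ell'$-element.

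Finally, I would verify the canonical character property. When the defect group equals $\cO_\ell(\Z(\wtG))$ the block is of central defect, so it contains $|\cO_\ell(\Z(\wtG))|$ irreducible ordinary characters of equal degree related to one another by twisting with $\Lin(\wtG/G\cO_{\ell'}(\Z(\wtG)))$; by the definition in \S\ref{subsec:Preli}.1 and by Lemma~\ref{char-lin-act}, the canonical one among them is the unique choice whose central character on $\cO_\ell(\Z(\wtG))$ is trivial, which under the duality (\ref{equ:Z(G*)-Lin(G)}) corresponds exactly to selecting the $\ell'$-representative of $\ts$ in its $\Z(\wtG^*)_\ell$-coset. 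The main obstacle is Step~2: carefully matching, uniformly across the three families $\scF_{\xi,0}$, $\scF_{\xi,1}$, $\scF_{\xi,2}$, the block-theoretic ``minimal defect'' condition with the purely combinatorial $e_\Gamma$-core condition, and making sure the parity/rank/defect conventions for Lusztig symbols in Cabanes--Enguehard are consistent with those adopted in \S\ref{subsec:cores}.
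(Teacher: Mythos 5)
Your overall strategy is genuinely different from the paper's. The paper disposes of this lemma in one line, following the Fong--Srinivasan approach of \cite[(11B)]{FS89}: one computes the $\ell$-part of the degree $\tchi_{\ts,\la}(1)$ via the Jordan decomposition degree formula (\cite[Prop.~11.5.6]{DM20}) and reads off the defect from the explicit structure of $\C_{\wtG^*}(\ts)$ given in Lemma~\ref{lem-tcent}. Since $\psi_\la$ factors over the product $\prod_\Ga C_\Ga(\ts_\Ga)$, the $\ell$-defect of $\tchi_{\ts,\la}$ is $v(q-1)+\sum_\Ga\bigl(v(|C_\Ga(\ts_\Ga)|)-v(\psi_{\la_\Ga}(1))\bigr)$, and the core condition on each $\la_\Ga$ is exactly what makes each $\Ga$-summand vanish. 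This is elementary and avoids the block machinery entirely. You instead go through the Cabanes--Enguehard/Kessar--Malle classification by $d$-Jordan-cuspidal pairs, which is a legitimate alternative route, but it is heavier and requires more care than the paper's computation.

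Within your approach there is a concrete gap: the assertion that ``the defect group of $b_{\wtG^F}(\wtbL,\zeta)$ is a Sylow $\ell$-subgroup of $\wtbL^F$'' is false, and in your setting it is self-contradictory. If $\wtbL$ has ``no $\Phi_d$-contribution beyond the center of $\wtbG$'', then $\Z^\circ(\wtbL)_d=\Z^\circ(\wtbG)_d$, and since $\wtbL$ is $d$-split this forces $\wtbL=\C_{\wtbG}\bigl(\Z^\circ(\wtbL)_d\bigr)=\wtbG$; a Sylow $\ell$-subgroup of $\wtbL^F=\wtG^F$ is then the full Sylow, not $\cO_\ell(\Z(\wtG))$. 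The correct statement, from \cite{CE94} (or \cite[\S 22]{CE04}) transported through \cite[Thm.~1.6]{En08}, is that for a unipotent $d$-cuspidal pair $(\wtbL,\zeta)$ the defect group $D$ satisfies $D\cap\wtbL^F=\cO_\ell(\Z(\wtbL^F))$ with quotient a Sylow $\ell$-subgroup of the relative Weyl group $W_{\wtG^F}(\wtbL,\zeta)$. With $\wtbL=\wtbG$ the relative Weyl group is trivial, so $D=\cO_\ell(\Z(\wtG))$, which is what you want; but that is a different argument than the one you wrote. You should also flesh out the claim that ``integrality of the defect group forces $\ts$ to be $\ell'$'': if $\ts_\ell\ne 1$ then $\tchi_{\ts,\la}$ lies in $\cE_\ell(\wtG^F,\ts_{\ell'})$ but not in $\cE(\wtG^F,\ell')$, and showing it cannot be the canonical character requires either a degree argument (which would essentially reduce you to the paper's proof) or a more careful tracking of the Jordan decomposition of blocks in \cite{En08}.
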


\begin{proof}
Similar as \cite[(11B)]{FS89}, using \cite[Prop.~11.5.6]{DM20} and Lemma \ref{lem-tcent}.
\end{proof}

\begin{lem}\label{lem-lc-cen}
There is an isomorphism
\[ \omega:\quad \wtG^*/G^* \to \Irr(Z(\wtG)),\ \ts G^* \mapsto \omega(\xi), \]
where $\ts$ is a semisimple element of $\wtG^*$ with multiplier $\xi$.
Let $\tchi_{\ts,\la}$ be an irreducible ordinary character of $\wtG$ with $\xi$ the multiplier of $\ts$,
then the above isomorphism can be chosen
such that the linear character of $\Z(\wtG)$ induced by $\tchi_{\ts,\la}$ is $\omega(\xi)$.
Thus $\grp{-\fe} \in \Ker\tchi_{\ts,\la}$ if and only if the multiplier $\xi$ is a square in $\F_q^\ti$.
\end{lem}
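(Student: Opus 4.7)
The plan is to use general Deligne--Lusztig duality to set up $\omega$, match central characters via Lemma \ref{char-lin-act} and \eqref{equ:Z(G*)-Lin(G)}, and then compute $\omega(\xi)(-\fe)$ explicitly.

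For the first assertion, the multiplier $\mu\colon\wtG^*=\CSp(V^*)\to\F_q^\ti$, $\ts\mapsto\xi$, is a surjective group homomorphism, and both $\wtG^*/G^*$ and $\F_q^\ti$ have order $q-1$. Using the general isomorphism $\wtG^*/G^*\cong\Irr(\Z(\wtG))$ from the theory of regular embeddings (dual to \eqref{equ:Z(G*)-Lin(G)}; cf.\ \cite[Sect.~2.5]{GM20}), together with the identifications $\Z(\wtG)=\F_q^\ti\fe\cong\F_q^\ti$ and $\F_q^\ti\cong\Irr(\F_q^\ti)$, one builds $\omega$ so that $\ts G^*\mapsto\omega(\xi)$.

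For the central character assertion, the key input is the general fact that in Lusztig's Jordan decomposition for regular embeddings, the central character of $\tchi_{\ts,\la}$ on the connected center $\Z(\wtG)$ is constant across the Lusztig series $\cE(\wtG,\ts)$ and depends only on the image of $\ts$ in $\wtG^*/G^*$. The variation of this central character under translation by $\tz\in\Z(\wtG^*)$ is governed by Lemma \ref{char-lin-act}: the identity $\hat{\tz}\tchi_{\ts,\la}=\tchi_{\tz\ts,\tz.\la}$ shows that restricting $\hat{\tz}$ to $\Z(\wtG)$ gives the multiplicative correction to the central character, matching \eqref{equ:Z(G*)-Lin(G)}. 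One then normalizes the self-duality in $\omega$ so that the central character of $\tchi_{\ts,\la}$ is exactly $\omega(\xi)$.

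For the final sentence, identify $\Z(\wtG)=\F_q^\ti\fe$ with $\F_q^\ti$ via $\eta\fe\mapsto\eta$. Then $-\fe$ corresponds to $-1$, the unique order-two element of $\F_q^\ti$. Under any multiplicative self-duality of $\F_q^\ti$, one has $\omega(\xi)(-1)=\xi^{(q-1)/2}$, which equals $1$ if and only if $\xi\in(\F_q^\ti)^2$. Hence $\grp{-\fe}\subseteq\Ker\tchi_{\ts,\la}$ if and only if $\xi$ is a square in $\F_q^\ti$.

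The main obstacle is verifying that the central character factors through the quotient $\wtG^*/G^*$ and aligns with the multiplier-based isomorphism $\omega$. This is essentially bookkeeping under Jordan decomposition, relying on Lemma \ref{char-lin-act}, the identification \eqref{equ:Z(G*)-Lin(G)}, and general results for regular embeddings, but requires careful normalization of the self-duality of $\F_q^\ti$ to make everything consistent.
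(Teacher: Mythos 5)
Your proof follows the same broad strategy as the paper's (regular-embedding duality to set up $\omega$, then identify $\omega(\xi)$ with the central character of the Lusztig series), and the final conclusion is correct, but the two middle steps are argued somewhat differently, and one of your supporting tools does less work than you suggest.

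The paper's proof is concrete: it chooses a maximal torus pair $(\wtbT^*,\ts)\leftrightarrow(\wtbT,\hat{\ts})$ and uses that every $\tchi\in\cE(\wtG,\ts)$ satisfies $\Res^{\wtG}_{\Z(\wtG)}\tchi = \tchi(1)\,\Res^{\wtT}_{\Z(\wtG)}\hat{\ts}$, together with the dual isomorphism $\Irr(\Z(\wtG))\cong\wtT^*/T^*\cong\wtG^*/H^*$, to pin down the central character on all of $\Z(\wtG)$ and then restrict to $\grp{-\fe}$ via $\wtG^*/H^*\Z(\wtG^*)\cong\Irr(\grp{-\fe})$. You instead cite the ``general fact'' that the central character of $\tchi_{\ts,\la}$ depends only on the class of $\ts$ modulo $H^*$, and for the last assertion you substitute a direct root-of-unity computation, $\omega(\xi)(-1)=\xi^{(q-1)/2}$, in place of the paper's structural appeal to $\Irr(\grp{-\fe})$. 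That computation is correct and is a clean alternative; any isomorphism $\F_q^\ti\cong\Irr(\F_q^\ti)$ has this property since the generator of $\Irr(\F_q^\ti)$ evaluated on a generator of $\F_q^\ti$ is a primitive $(q-1)$-th root of unity.

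The weak point is your reliance on Lemma~\ref{char-lin-act} as corroboration for the normalization. That lemma only tracks translation of $\ts$ by central elements $\tz\in\Z(\wtG^*)$, and the image of $\Z(\wtG^*)$ in $\wtG^*/H^*\cong\F_q^\ti$ is precisely $(\F_q^\ti)^2$ (a central element $\lambda I$ of $\CSp(V^*)$ has multiplier $\lambda^2$). So the ``variation governed by Lemma~\ref{char-lin-act}'' argument cannot by itself normalize $\omega$ on the non-square multipliers, which is exactly the regime where the last sentence of the lemma is interesting. What actually carries the argument is the cited general fact (equivalently, the paper's torus-pair formula), not Lemma~\ref{char-lin-act}. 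If you keep the appeal to the general fact explicit as the load-bearing input, the proof is sound; the Lemma~\ref{char-lin-act} consistency check should be relegated to a remark rather than presented as part of the justification.
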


\begin{proof}
Let $\wtbT^*$ be a maximal torus of $\wtbG^*$ containing $\ts$.
Let $(\wtbT,\hat{\ts})$ be a pair corresponding to $(\wtbT^*,\ts)$.
Let $\wtT^*=\wtbT^{*F}$ and $T^* = \wtT^* \cap H^*$.
Via the duality, $\Irr(\Z(\wtG))$ is isomorphic to $\wtG^*/G^* \cong \wtT^*/T^*$.
So the isomorphism and the next assertion follow.
Note that the duality induces isomorphism
$\wtT^*/T^*\Z(\wtG^*) \cong \wtG^*/G^*\Z(\wtG^*) \cong \Irr(\grp{-\fe})$,
which means that $\Res^{\wtG}_{\grp{-\fe}} \tchi_{\ts,\la} = \tchi_{\ts,\la}(1)\Res^{\wtT}_{\grp{-\fe}}\hat{\ts}$,
so the last assertion follows.
\end{proof}

Beside the sets $\scF_{\xi,0}$, $\scF_{\xi,1}$, $\scF_{\xi,2}$, $\scF_\xi$,
we need some other sets of polynomials.
Let $\alp$ be a natural number.
For any monic polynomial $\Delta$ in $\F_{q^{2e\ell^\alp}}[X]$,
let $\wtDelta$ be the monic polynomial in $\F_{q^{2e\ell^\alp}}[X]$
whose roots are $\omega^{-q^{e\ell^\alp}}$ with $\omega$ running over all roots of $\Delta$.
As in \cite[p.160]{FS89}, set
\begin{align*}
\cE_{\alp,0} &= \Set{ \Delta\in\Irr\left(\F_{q^{e\ell^\alp}}[X]\right) \mid \Delta \neq X }, \\
\cE_{\alp,1} &= \Set{ \Delta\in\Irr\left(\F_{q^{2e\ell^\alp}}[X]\right) \mid \Delta \neq X, \wtDelta=\Delta }, \\
\cE_{\alp,2} &= \Set{ \Delta\wtDelta \mid \Delta\in\Irr\left(\F_{q^{2e\ell^\alp}}[X]\right), \wtDelta\neq\Delta },
\end{align*}
and 
\begin{equation}\label{def-cE}
 \cE_\alp = \begin{cases}
\cE_{\alp,0}, & \veps=1, \\
\cE_{\alp,1} \cup \cE_{\alp,2}, & \veps=-1.
\end{cases}
\addtocounter{thm}{1}\tag{\thethm}
\end{equation} 
Let $\scF'_\xi$ (resp. $\cE'_\alp$) be the subsets
of all polynomials in $\scF_\xi$ (resp. $\cE_\alp$) whose roots are of $\ell'$-order.

For any square $\xi=\xi_0^2$ ($\xi_0 \in \F_q^\ti$),
define the map
\[ \xi_0\cdot:\quad \scF_1 \to \scF_\xi,\quad \Ga \mapsto \xi_0 \cdot \Ga \]
be such that the roots of $\xi_0\cdot\Ga$ are the multiplications of roots of $\Ga$ by $\xi_0$.
Assume $\xi$ is a non-square.
If $\veps=1$, define the map
\[ \cN_{\alp,\xi}:\quad \cE_\alp \to \scF_\xi,\quad \Delta \mapsto \cN_{\alp,\xi}(\Delta) \]
be such that the set of roots of $\cN_{\alp,\xi}(\Delta)$ is the orbit of the roots of $\Delta$
under the actions of the following two operations on $\OF_q^\ti$
\[ b \mapsto b^q;\quad b \mapsto \xi b^{-1}. \]
If $\veps=-1$,
choose $\xi_1 \in \OF_q^\ti$ such that $\xi=\xi_1^{q^{e\ell^\alp}+1}$
and define the map
\[ \cN_{\alp,\xi}:\quad \cE_\alp \to \scF_\xi,\quad \Delta \mapsto \cN_{\alp,\xi}(\Delta) \]
be such that the roots of $\cN_{\alp,\xi}(\Delta)$ are
\[ (\xi_1 b)^{q^i},\quad i=0,1,\dots \]
where $b$ runs over the set of all roots of $\Delta$.
Denote also by $\xi_0\cdot$ and $\cN_{\alp,\xi}$
their restriction to the subsets $\scF'_1$ and $\cE'_\alp$ respectively.
See also \cite[p.161]{FS89} for a related map $\cN_\alp$.
See \S\ref{subsect:canonical} and Lemma \ref{lem-tN-Gamma(theta-Gamma)} below
for applications of the above definitions.

\subsection{Canonical characters}\label{subsect:canonical}

Keep the notational conventions in \S\ref{subsect:CN-SO}, \ref{subsect:radical-D0}, \ref{prelim-weights}.
Set $\wtC_i:=\wtC_{m_i,\alp_i,\ga_i,\bc_i}=\pi^{-1}(C_{m_i,\alp_i,\ga_i,\bc_i})$ and $\wtC:=\pi^{-1}(C)$.
Then $\wtC = \wtC_0 \prod_i \wtC_i$ is a central product over $\F_q^\ti\fe$,
where $\wtC_0 = D_0(\bV_0)^F$.
We also have for each $i>0$ an exact sequence
\[ 1 \to \F_q^\ti  \fe \rightarrow \wtC_i \xrightarrow{\pi} C_i \to 1.  \]
Let $\ttheta \in \dz(\wtC/\wtZ)$,
then $\ttheta = \ttheta_0 \prod_i \ttheta_i$,
where $\ttheta_0 \in \dz\left(\wtC_0/\cO_\ell(\Z(\wtC_0))\right)$
and $\ttheta_i \in \dz\left(\wtC_i/\wtZ_i\right)$.
Thus $\ttheta_0 = \chi_{\ts_0,\ka}^{\wtC_0}$,
where $\ts_0$ is a semisimple $\ell'$-element of $\CSp(\bV_0^*)^F$ with multiplier $\xi$
and $\ka_\Gamma$ is an $e_\Gamma$-core for each $\Gamma\in\scF_\xi$ by Lemma \ref{lem-defect0}.
Note that $\ttheta$, $\ttheta_0$ and $\ttheta_i$ induce the same linear character of $\F_q^\ti\fe$,
which is $\omega(\xi)$ by Lemma \ref{lem-lc-cen}.
The key step in our construction is to describe $\ttheta_i$.
We will focus on the component $\wtC_{m_i,\alp_i,\ga_i}$ in the following paragraphs
and thus drop the subscript $i$.
In the sequel,
we set $\wtM_{m,\alp,\ga} = \pi^{-1}(M_{m,\alp,\ga})$
and then $\wtC_{m,\alp,\ga}\wtM_{m,\alp,\ga}$ is a central product over $\F_q^\ti\fe$.

\subsubsection{}
Assume $\grp{-\fe} \leq \Ker\omega(\xi)$,
then $\xi$ is a square in $\F_q^\ti$ by Lemma \ref{lem-lc-cen}.
Fix $\xi_0 \in \F_q$ such that $\xi=\xi_0^2$.
Recall that we identify $\xi_0$ with an element in $\Z(\wtG^*)$,
and denote by $\hxi_0$ the image of $\xi_0$ under the isomorphism (\ref{equ:Z(G*)-Lin(G)}).
We can choose the isomorphism (\ref{equ:Z(G*)-Lin(G)})
such that $\Res^{\wtG}_{\F_q^\ti\fe}\hxi_0 = \omega(\xi)$.
Since there is an exact sequence
\[ 1 \to \grp{-\fe} \rightarrow \F_q^\ti\fe \rightarrow \wtG/G, \]
$\Res^{\wtG}_{\F_q^\ti \fe} \hxi_0$ is independent of the choice of $\xi_0$,
which is the reason why we can choose an arbitrary $\xi_0$ such that $\xi=\xi_0^2$,
but once we make such a choice, we should fix it for all the subsequent constructions.
We denote in the sequel also by $\hxi_0$ its restriction to any subgroups.
For $\ttheta \in \dz(\wtC_{m,\alp,\ga}/\wtZ_{m,\alp,\ga})$ inducing $\omega(\xi)$ on $\F_q^\ti\fe$,
we have $\ttheta = \hxi_0\theta$ with $\theta \in \dz(C/Z)$.
There is a $\Ga_0 \in \scF_1'$ such that $\theta = \theta_{\Ga_0,\ga}$, and we denote $\ttheta = \ttheta_{\Ga,\ga}$,
where $\Ga = \xi_0\cdot\Ga_0 \in \scF'_\xi$ is as in \ref{prelim-weights}.
As before, we will replace the subscripts $m,\alp$ by $\Ga$ in all the relevant constructions.
It is easy to see that $\dz(\wtC_{\Ga,\ga}\wtM_{\Ga,\ga}/\wtR_{\Ga,\ga} \mid \ttheta_{\Ga,\ga})
= \set{\tvtheta_{\Ga,\ga}:= \ttheta_{\Ga,\ga}\St_\ga}$.
Here, note that $\tvtheta_{\Ga,\ga} = \hxi_0\theta_{\Ga_0,\ga}\St_\ga$
with $\theta_{\Ga_0,\ga}\St_\ga$ viewed as a character of $\wtC_{\Ga,\ga}\wtM_{\Ga,\ga}$ by inflation.

\subsubsection{}
Assume $\grp{-\fe} \nleq \Ker\omega(\xi)$, then $\xi$ is a non-square.
To consider this case, we need some preparation.

Set $L_{m,\alp,\ga} = \hbar(\GL(m\ell^\ga,\veps q^{e\ell^\alp}))$
and $\wtL_{m,\alp,\ga} = \pi^{-1}(L_{m,\alp,\ga})$.
Note that $C_{m,\alp,\ga}M_{m,\alp,\ga} \leq L_{m,\alp,\ga}$,
thus $\wtC_{m,\alp,\ga}\wtM_{m,\alp,\ga} \leq \wtL_{m,\alp,\ga}$.
The map
\[ \hbar^*:\quad A \mapsto
\diag\{ A,F(A),\dots,F^{e\ell^\alp-1}(A),A^{-t},F(A^{-t}),\dots,F^{e\ell^\alp-1}(A^{-t}) \} \]
from $\GL(m\ell^{\ga},\veps q^{e\ell^\alp})$ to $\Sp(2me\ell^\alp,\OF_q)$
is also called a \emph{hyperbolic embedding}.
Set $L_{m,\alp,\ga}^* = \hbar^*(\GL(m\ell^{\ga},\veps q^{e\ell^\alp}))$,
then $L_{m,\alp,\ga}^*$ is in dual with $L_{m,\alp,\ga}$.
Let $\zeta$ be a generator of $\F_q^\ti$ and $\zeta_1 \in \OF_q^\ti$
satisfying that $o(\zeta_1)=(q-1)(q^{e\ell^\alp}+1)$ and $\zeta=\zeta_1^{q^{e\ell^\alp}+1}$.
Set
\begin{equation}\label{equ-tau-m,alpha,gamma}
\tau_{m,\alp,\ga} =
\begin{cases}
I_{me\ell^\alp} \otimes \diag\{1,\zeta\}, & \veps=1;\\
I_{me\ell^\alp} \otimes \diag\{ \zeta_1,\zeta_1^q,\dots,\zeta_1^{q^{e\ell^\alp-1}},
\zeta_1^{q^{e\ell^\alp}},\zeta_1^{q^{e\ell^\alp+1}},\dots,\zeta_1^{q^{2e\ell^\alp-1}} \}, & \veps=-1.
\end{cases}
\addtocounter{thm}{1}\tag{\thethm}
\end{equation}
Then $\wtL_{m,\alp,\ga}^* := \grp{\tau_{m,\alp,\ga},L_{m,\alp,\ga}^*}$ is in dual with $\wtL_{m,\alp,\ga}$.
Direct calculation shows that
\[ \Z(\wtL_{m,\alp,\ga}^*) =
\begin{cases}
\grp{\tau_{m,\alp,\ga}} \ti \Z(L_{m,\alp,\ga}^*), & \veps=1; \\
\grp{\tau_{m,\alp,\ga}}, & \veps=-1.
\end{cases} \]
Thus $|\Z(\wtL_{m,\alp,\ga}^*)| = (q-1)(q^{e\ell^\alp}-\veps)$.
The duality gives an isomorphism
\begin{equation}\label{equ-L-Z-Lin}
\Z(\wtL_{m,\alp,\ga}^*) \cong
\Irr\left( \wtL_{m,\alp,\ga}/[\wtL_{m,\alp,\ga},\wtL_{m,\alp,\ga}] \right).
\addtocounter{thm}{1}\tag{\thethm}
\end{equation}
Consequently, $|[\wtL_{m,\alp,\ga},\wtL_{m,\alp,\ga}]| = |[L_{m,\alp,\ga},L_{m,\alp,\ga}]|$,
thus $[\wtL_{m,\alp,\ga},\wtL_{m,\alp,\ga}] \cap (\F_q^\ti \fe) = \set{\fe}$
from the following exact sequence
\[ 1 \to [\wtL_{m,\alp,\ga},\wtL_{m,\alp,\ga}] \cap (\F_q^\ti \fe) \xrightarrow{}
[\wtL_{m,\alp,\ga},\wtL_{m,\alp,\ga}] \xrightarrow{\pi} [L_{m,\alp,\ga},L_{m,\alp,\ga}] \to 1. \]
So there is an injective map
\begin{equation}\label{equ-Z-tL/[tL,tL]}
\F_q^\ti \fe \hookrightarrow \wtL_{m,\alp,\ga}/[\wtL_{m,\alp,\ga},\wtL_{m,\alp,\ga}].
\addtocounter{thm}{1}\tag{\thethm}
\end{equation}
More precisely, by the duality, we have when $\veps=1$ that
\begin{equation}\label{equ-tL/[tL,tL]}
\wtL_{m,\alp,\ga}/[\wtL_{m,\alp,\ga},\wtL_{m,\alp,\ga}]
\cong (\F_q^\ti \fe) \ti L_{m,\alp,\ga}/[L_{m,\alp,\ga},L_{m,\alp,\ga}]
\cong \ZZ_{q-1} \ti \ZZ_{q^{e\ell^\alp}-1};
\addtocounter{thm}{1}\tag{\thethm}
\end{equation}
while when $\veps=-1$, we have
\begin{equation}
\wtL_{m,\alp,\ga}/[\wtL_{m,\alp,\ga},\wtL_{m,\alp,\ga}]
\cong \ZZ_{(q-1)(q^{e\ell^\alp}+1)}.
\addtocounter{thm}{1}\tag{\thethm}
\end{equation}

Assume now that $\veps=1$.
Let $k$ be the natural number such that $\xi = \zeta^k$
and set $\tau(\xi) = \tau_{m,\alp,\ga}^k$.
Denote by $\widehat{\tau(\xi)}$ the image of $\tau(\xi)$ under the isomorphism (\ref{equ-L-Z-Lin}),
which can be chosen such that
$\Res^{\wtL_{m,\alp,\ga}}_{\F_q^\ti \fe} \widehat{\tau(\xi)} = \omega(\xi)$
and $L_{m,\alp,\ga} \leq \Ker\widehat{\tau(\xi)}$.
We denote also by $\widehat{\tau(\xi)}$ its restriction to any subgroup of $\wtL_{m,\alp,\ga}$.
For $\ttheta \in \dz(\wtC_{m,\alp,\ga}/\wtZ_{m,\alp,\ga})$ inducing $\omega(\xi)$ on $\F_q^\ti\fe$,
we have $\ttheta = \widehat{\tau(\xi)}\theta$ with $\theta \in \dz(C_{m,\alp,\ga}/Z_{m,\alp,\ga})$.
Recall that $C_{m,\alp,\ga}/Z_{m,\alp,\ga} \cong \GL(m,q^{e\ell^\alp})/\Z(\GL(m,q^{e\ell^\alp}))$.
Note that $\theta = \chi_\Delta$ when viewed as a character of $\GL(m,q^{e\ell^{\alp_\Ga}})$,
where $\chi_\Delta\in\Irr(\GL(m,q^{e\ell^\alp}))$ is the irreducible character
labeled by the semisimple element of $\GL(m,q^{e\ell^\alp})$
with unique elementary divisor $\Delta\in\cE'_\alp$ of multiplicity one.
Let $\Ga = \cN_{\alp,\xi}(\Delta) \in \scF'_\xi$ with $\cN_{\alp,\xi}$ as in \S\ref{prelim-weights}.
We denote $\ttheta$ as $\ttheta_{\Ga,\ga}$.

Assume then $\veps=-1$.
Choose $\xi_1 \in \OF_q$ such that $\xi=\xi_1^{q^{e\ell^\alp}+1}$ as in \S\ref{prelim-weights}.
Let $k$ be the natural number such that $\xi_1=\zeta_1^k$
and set $\tau(\xi_1) = \tau_{m,\alp,\ga}^k$.
Denote by $\widehat{\tau(\xi_1)}$ the image of $\tau(\xi_1)$ under the isomorphism (\ref{equ-L-Z-Lin}),
which can be chosen such that $\Res^{\wtL_{m,\alp,\ga}}_{\F_q^\ti \fe} \widehat{\tau(\xi_1)} = \omega(\xi)$.
Note $\Res^{\wtL_{m,\alp,\ga}}_{\F_q^\ti \fe} \widehat{\tau(\xi_1)}$ is independent of the choice of $\xi_1$,
which is the reason why we can choose an arbitrary $\xi_1$ such that $\xi=\xi_1^{q^{e\ell^\alp}+1}$,
but once we make such a choice,
we should fix it for all the subsequent constructions.
Denote also by $\widehat{\tau(\xi_1)}$ its restriction to any subgroup of $\wtL_{m,\alp,\ga}$.
For $\ttheta \in \dz(\wtC_{m,\alp,\ga}/\wtZ_{m,\alp,\ga})$ inducing $\omega(\xi)$ on $\F_q^\ti\fe$,
we have $\ttheta = \widehat{\tau(\xi_1)}\theta$
with $\theta = \chi_\Delta\in \dz(C_{m,\alp,\ga}/Z_{m,\alp,\ga})$,
where $\Delta \in \cE'_\alp$ is as in the above case.
Let $\Ga = \cN_{\alp,\xi}(\Delta) \in \scF'_\xi$ with $\cN_{\alp,\xi}$ as in \S\ref{prelim-weights}.
Again, denote $\ttheta$ as $\ttheta_{\Ga,\ga}$.

When the canonical character considered is $\ttheta_{\Ga,\ga}$,
we will replace the subscripts $m,\alp$ in all the relevant constructions by $\Ga$ as before.
Since $\widehat{\tau(\xi)}$ or $\widehat{\tau(\xi_1)}$ is a linear character of $\wtL_{\Ga,\ga}$ which contains $\wtC_{\Ga,\ga}\wtM_{\Ga,\ga}$,
it is easy to see that $\dz(\wtC_{\Ga,\ga}\wtM_{\Ga,\ga}/\wtR_{\Ga,\ga} \mid \ttheta_{\Ga,\ga})
= \set{ \tvtheta_{\Ga,\ga} := \ttheta_{\Ga,\ga}\St_\ga }$.

\subsection{}\label{subsect:key-lemma-weights}
The key Lemma \ref{lem-tN-Gamma(theta-Gamma)} below
will make it possible to construct weight of special Clifford groups as in \S\ref{weights-SO}.
To prove this lemma, we need some preparation.

Set $\wtN_{m,\alp,\ga,\bc} = \pi^{-1}(N_{m,\alp,\ga,\bc})$.
Assume first that $\bc=(0)$ and keep the notation in \ref{subsect:CN-SO}.
When $\ell$ is linear, let $\tv_{m,\alp,\ga}$ and $\tdelta_{m,\alp,\ga}$
be pre-images of $v_{m,\alp,\ga}$ and $\de_{m,\alp,\ga}$ in $\wtW_0 \rtimes \wtW_1$ respectively;
when $\ell$ is unitary,
let $\tv_{m,\alp,\ga}$ be any pre-image of $v_{m,\alp,\ga}$ in $\wtW_0 \rtimes \wtW_1$.
Then we have that \[ \wtN_{m,\alp,\ga} = \begin{cases}
\grp{ \wtC_{m,\alp,\ga}\wtM_{m,\alp,\ga},\tv_{m,\alp,\ga},\tdelta_{m,\alp,\ga} }, & \veps=1, \\
\grp{ \wtC_{m,\alp,\ga}\wtM_{m,\alp,\ga},\tv_{m,\alp,\ga} }, & \veps=-1.
\end{cases} \]
Now,
\begin{equation}\label{equ-tN-m,alpha,gamma,c}
\wtN_{m,\alp,\ga,\bc}/\wtR_{m,\alp,\ga,\bc} \cong
\wtN_{m,\alp,\ga}/\wtR_{m,\alp,\ga} \times \N_{\fS(\ell^{|\bc|})}(A_{\bc})/A_{\bc},
\addtocounter{thm}{1}\tag{\thethm}
\end{equation}
where $\N_{\fS(\ell^{|\bc|})}(A_{\bc})/A_{\bc} \cong \GL(c_1,\ell) \ti \GL(c_2,\ell) \ti \cdots \ti \GL(c_r,\ell)$.
Here, note that $A_{\bc}$ and $\fS(\ell^{|\bc|})$ are subgroups of $\wtW_1$,
thus the right hand side of (\ref{equ-tN-m,alpha,gamma,c}) is a direct product.

\begin{lem}\label{lem-NL-m,a,gamma-D0}
Keep the notational conventions in \S\ref{subsect:canonical}.
Let $A \in \GL(m\ell^\ga,\veps q^{e\ell^\alp})$.
Denote by $|A|$ its determinant in $\GL(m\ell^\ga,\veps q^{e\ell^\alp})$.
For each $A \in \GL(m\ell^\ga,\veps q^{e\ell^\alp})$,
we can choose pre-image in $\wtL_{m,\alp,\ga}$ of $\hbar(A) \in L_{m,\alp,\ga}$ under $\pi$
such that the following hold.
\begin{enumerate}[\rm(1)]
\item
When $\veps=1$,
$\tv_{m,\alp,\ga}$ stabilizes the two components in (\ref{equ-tL/[tL,tL]}) and
\[ \tdelta_{m,\alp,\ga}^{-1} \tilde{A} \tdelta_{m,\alp,\ga}
= |A|^{1+q+\cdots+q^{e\ell^\alp-1}} \widetilde{A^{-t}}
\mod [\wtL_{m,\alp,\ga}, \wtL_{m,\alp,\ga}]. \]
\item
When $\veps=-1$, we have
\[ F_q(\tilde{A}) = \tilde{A}^q \mod [\wtL_{m,\alp,\ga}, \wtL_{m,\alp,\ga}]. \]
\end{enumerate}
\end{lem}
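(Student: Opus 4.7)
The plan is to reduce both claims to computations in the abelianization $\wtL_{m,\alp,\ga}/[\wtL_{m,\alp,\ga},\wtL_{m,\alp,\ga}]$, whose structure is given by (\ref{equ-tL/[tL,tL]}) and its $\veps=-1$ analogue. Since any two lifts of $\hbar(A)$ to $\wtL_{m,\alp,\ga}$ differ by a central element of $\F_q^\ti\fe$, this central ambiguity disappears once we pass to the abelianization, so the congruences in (1) and (2) make sense independently of the choice of lift.

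For part (1), the stability of each factor of $(\F_q^\ti\fe)\ti L_{m,\alp,\ga}/[L_{m,\alp,\ga},L_{m,\alp,\ga}]$ under $\tv_{m,\alp,\ga}$-conjugation is essentially structural: the central factor $\F_q^\ti\fe$ is fixed pointwise because it lies in $\Z(\wtL_{m,\alp,\ga})$, and the complementary factor is setwise stable because $\pi(\tv_{m,\alp,\ga})=v_{m,\alp,\ga}$ normalizes $L_{m,\alp,\ga}$. For the identity involving $\tdelta_{m,\alp,\ga}$, I would first verify in $L_{m,\alp,\ga}$ that the block swap yields $\de_{m,\alp,\ga}^{-1}\hbar(A)\de_{m,\alp,\ga}=\hbar(A^{-t})$, so that lifting back produces $\tdelta_{m,\alp,\ga}^{-1}\tilde{A}\tdelta_{m,\alp,\ga}=\widetilde{A^{-t}}\cdot z(A)$ for some cocycle $z(A)\in\F_q^\ti\fe$. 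The assignment $A\mapsto z(A)$ descends to a group homomorphism from $\GL(m\ell^\ga,q^{e\ell^\alp})/[\GL(m\ell^\ga,q^{e\ell^\alp}),\GL(m\ell^\ga,q^{e\ell^\alp})]\cong \F_{q^{e\ell^\alp}}^\ti$ into $\F_q^\ti$; the only such homomorphisms factor through the norm, so necessarily $z(A)=|A|^{k(1+q+\cdots+q^{e\ell^\alp-1})}$ for some integer $k$. Pinning down $k=1$ is the key point; I would do it either by evaluating both sides for a scalar matrix $A=\la I$ using explicit products of vectors in the Clifford algebra $C_0(\bV)$ for $\tdelta_{m,\alp,\ga}$, or equivalently by dualizing via (\ref{equ-L-Z-Lin}) and tracing the induced action of $\tdelta_{m,\alp,\ga}$ on $\Z(\wtL_{m,\alp,\ga}^*)$, which corresponds to the inverse-transpose whose $\F_q^\ti$-component is detected by the norm.

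For part (2), the abelianization is cyclic of order $(q-1)(q^{e\ell^\alp}+1)$, and $F_q$ induces an automorphism of it. Dualizing via (\ref{equ-L-Z-Lin}), this automorphism is transpose to the action of $(F_q)^*$ on $\Z(\wtL_{m,\alp,\ga}^*)=\grp{\tau_{m,\alp,\ga}}$. Direct inspection of the formula (\ref{equ-tau-m,alpha,gamma}) shows that the $q$-power map cyclically permutes the diagonal entries $\zeta_1^{q^i}$, sending $\tau_{m,\alp,\ga}$ to $\tau_{m,\alp,\ga}^q$, so dually $F_q$ acts on the abelianization of $\wtL_{m,\alp,\ga}$ as multiplication by $q$, which is exactly the claimed congruence. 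The main obstacle throughout is fixing the precise exponent $1+q+\cdots+q^{e\ell^\alp-1}$ in part (1): all other steps are formal, but this coefficient requires a concrete computation, and the cleanest route is the scalar-matrix test, where both $\tilde{A}$ and $\tdelta_{m,\alp,\ga}$ can be written explicitly as products of vectors in $C_0(\bV)$ following \S\ref{subsect:Clifford}.
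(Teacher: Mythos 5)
Your opening claim that the central ambiguity of the lifts ``disappears once we pass to the abelianization'' is incorrect, and this undermines the key reduction in your argument for part (1). By (\ref{equ-Z-tL/[tL,tL]}), the central subgroup $\F_q^\ti\fe$ injects into $\wtL_{m,\alp,\ga}/[\wtL_{m,\alp,\ga},\wtL_{m,\alp,\ga}]$; it is precisely because $[\wtL_{m,\alp,\ga},\wtL_{m,\alp,\ga}]\cap\F_q^\ti\fe=\set{\fe}$ that the lemma has content. For part (2) you are saved by the accident that replacing $\tilde A$ by $c\tilde A$ with $c\in\F_q^\ti\fe$ changes the congruence by $c^{1-q}=\fe$. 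But in part (1) the lifts of $A$ and of $A^{-t}$ are chosen independently; replacing them by $c\tilde A$ and $c'\widetilde{A^{-t}}$ changes the relation by $c/c'$, which is a genuine nontrivial element of the abelianization. So the statement really does depend on choosing the two lifts \emph{consistently}, and your subsequent assertion that ``$A\mapsto z(A)$ descends to a group homomorphism'' is not justified without specifying how that consistency is achieved. Moreover, your proposed scalar-matrix test $A=\la I$ is in general insufficient to pin down $k$: for scalar matrices $|A|=\la^{m\ell^\ga}$ only ranges over a proper subgroup of $\F_{q^{e\ell^\alp}}^\ti$ when $\gcd(m\ell^\ga,q^{e\ell^\alp}-1)>1$, so the equation $z(\la I)=|\la I|^{k(1+q+\cdots+q^{e\ell^\alp-1})}$ may determine $k$ only modulo a proper divisor.

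The paper sidesteps all of this by working with a generating element: it reduces (modulo $[\wtL_{m,\alp,\ga},\wtL_{m,\alp,\ga}]$) to $A=\diag\{b,1,\dots,1\}$, and further to $m=1$, $\ga=0$. It then writes $\tv$, $\tdelta$, $\tilde A$ and $\widetilde{A^{-t}}$ explicitly as products of vectors in the Clifford algebra (for instance $\tilde A=\prod_{i=1}^\de(\veps_i+\eta_i)(\veps_i+b^{q^{i-1}}\eta_i)$ and $\widetilde{A^{-t}}$ the same with $b$ replaced by $b^{-1}$, which is exactly the consistent choice of lifts), verifies $F$-rationality, and reads off the norm factor by a direct Clifford-algebra computation. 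Your instinct at the end --- that the exponent must be nailed down by an explicit product computation in $C_0(\bV)$ following \S\ref{subsect:Clifford} --- is the right one; but the element to test should be $\diag\{b,1,\dots,1\}$, not a scalar, and the choice of lifts must be made explicit rather than appealed to via a nonexistent cancellation in the abelianization.
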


\begin{proof}
Modulo $[\wtL_{m,\alp,\ga}, \wtL_{m,\alp,\ga}]$, it suffices to assume $A=\diag\{b,1,\dots,1\}$.
Furthermore there is no loss of generality to assume that $m=1$, $\ga=0$.
Set then $\de:=e\ell^\alp$ and choose a basis of $\bV_{1,\alp,0}$
\[ \veps_1,\veps_2,\dots,\veps_\de, \eta_1,\eta_2,\dots,\eta_\de \]
such that $B(\veps_i,\veps_j)=B(\eta_k,\eta_l)=0$ and $B(\veps_i,\eta_j)=\de_{ij}$,
where $\de_{ij}$ is the Kronecker symbol.
Thus \[ \hbar(A) = \diag\{ b, b^q, \dots, b^{q^{\de-1}}, b^{-1}, b^{-q}, \dots, b^{-q^{\de-1}} \}. \]

Assume $\veps=1$.
We can write $\tv_{1,\alp}$ and $\tdelta_{1,\alp}$ explicitly:
\[ \tv_{1,\alp} = \prod_{i=1}^{\de-1} \tw_{i,i+1},\quad \tdelta_{1,\alp} = \prod_{i=1}^\de \tw_i, \]
where $\tw_{i,i+1},\tw_i$ are as in \S\ref{subsect:Weyl-D0}.
Note that $b \in \F_{q^\de}^\ti$ and set
\[ \tilde{A} = \prod_{i=1}^\de (\veps_i+\eta_i)(\veps_i+b^{q^{i-1}}\eta_i);\quad
\widetilde{A^{-t}} = \prod_{i=1}^\de (\veps_i+\eta_i)(\veps_i+b^{-q^{i-1}}\eta_i). \]
Then direct calculation shows that $\tv_{1,\alp}F(\tilde{A})\tv_{1,\alp}^{-1}=\tilde{A}$, so $\tilde{A} \in D_0(\bV)^{\tv F}$.
The action of $\tdelta_{1,\alp}$ follows also from direct calculation.

Assume $\veps=-1$.
We can write $\tv_{1,\alp}$ explicitly (see (\ref{equ-v-W-unitary})):
\[ \tv_{1,\alp} = \left( \prod_{i=1}^{\de-1} \tw_{i,i+1} \right) \tw_\de. \]
Note that in this case $b^{q^\de+1}=1$ and set
\[ \tilde{A} = b_0\prod_{i=1}^\de (\veps_i+\eta_i)(\veps_i+b^{q^{i-1}}\eta_i), \]
where $b_0 \in \OF_q$ is such that $b_0^{q-1}=b$,
then direct calculation shows that $\tv_{1,\alp}F(\tilde{A})\tv_{1,\alp}^{-1}=\tilde{A}$, so $\tilde{A} \in D_0(\bV)^{\tv F}$.
The assertion in this case also follows from direct calculation.
\end{proof}

\begin{lem}\label{lem-tN-Gamma(theta-Gamma)}
Keep the conventions and notation above and assume $\ttheta_{\Ga,\ga}$ is as in \S\ref{subsect:canonical}.
Then $m_\Ga e\ell^{\alp_\Ga} = e_\Ga\de_\Ga$ and $|\dz(\wtN_{\Ga,\ga}/\wtR_{\Ga,\ga}\mid\ttheta_{\Ga,\ga})| = \beta_\Ga e_\Ga$.
\end{lem}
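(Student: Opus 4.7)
Plan. The statement decomposes into two claims, which I would treat separately.

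\emph{First claim.} The arithmetic identity $m_\Ga e\ell^{\alp_\Ga} = e_\Ga \de_\Ga$ is a direct case-by-case verification. Split by $\veps=\pm 1$, by whether $\xi$ is a square in $\F_q^\ti$, and by which subset $\scF_{\xi,j}$ ($j=0,1,2$) contains $\Ga$, so that exactly one of the three descriptions of $\ttheta_{\Ga,\ga}$ in \S\ref{subsect:canonical} applies. In the square case $\Ga = \xi_0\cdot\Ga_0$ with $\Ga_0\in\scF'_1$, and multiplication of roots by $\xi_0\in\F_q^\ti$ preserves $d_\Ga$, $\de_\Ga$, Galois type and hence $e_\Ga$; the identity then follows at once from the analogous $m e\ell^\alp = e_{\Ga_0}\de_{\Ga_0}$ recalled in \S\ref{weights-SO}. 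In the non-square cases $\Ga=\cN_{\alp,\xi}(\Delta)$ for some $\Delta\in\cE'_\alp$, and the definition of $\cN_{\alp,\xi}$ from \S\ref{prelim-weights} says that the roots of $\Ga$ form the orbit of the roots of $\Delta$ under $b\mapsto b^q$ together with the involution $b\mapsto\xi b^{-1}$ (if $\veps=1$) or the twist $b\mapsto\xi_1 b$ (if $\veps=-1$). A direct check of the orbit size against the definition of $e_\Ga$ in \S\ref{subsec:cores} yields the identity in every case.

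\emph{Second claim.} Apply Clifford theory to the chain $\wtR_{\Ga,\ga}\unlhd\wtC_{\Ga,\ga}\wtM_{\Ga,\ga}\unlhd\wtN_{\Ga,\ga}$. By \S\ref{subsect:canonical}, $\dz(\wtC_{\Ga,\ga}\wtM_{\Ga,\ga}/\wtR_{\Ga,\ga}\mid\ttheta_{\Ga,\ga}) = \{\tvtheta_{\Ga,\ga}\}$ is a singleton. Since $\wtN_{\Ga,\ga}(\tvtheta_{\Ga,\ga}) = \wtN_{\Ga,\ga}(\ttheta_{\Ga,\ga})$ (because $\St_\ga$ is invariant under the full normalizer) and the quotient $\wtN_{\Ga,\ga}(\ttheta_{\Ga,\ga})/\wtC_{\Ga,\ga}\wtM_{\Ga,\ga}$ embeds into the abelian group $\wtN_{\Ga,\ga}/\wtC_{\Ga,\ga}\wtM_{\Ga,\ga}\cong N_{\Ga,\ga}/C_{\Ga,\ga}M_{\Ga,\ga}$ (via $\pi$), the number of defect zero characters of $\wtN_{\Ga,\ga}/\wtR_{\Ga,\ga}$ lying above $\ttheta_{\Ga,\ga}$ equals the index
\begin{equation*}
|\wtN_{\Ga,\ga}(\ttheta_{\Ga,\ga}):\wtC_{\Ga,\ga}\wtM_{\Ga,\ga}|.
\end{equation*}
The goal is then to show this index equals $\beta_\Ga e_\Ga$, matching the analogous $\SO$-index $|N_{\Ga,\ga}(\theta_{\Ga,\ga}):\hbar(N_{\Ga,\ga}^0)|=\beta_\Ga e_\Ga$ recorded in \S\ref{weights-SO} from \cite[\S3]{An94}.

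To compare the two indices, I would factor $\ttheta_{\Ga,\ga}$ as $\hxi_0\theta_{\Ga_0,\ga}$ (square case), $\widehat{\tau(\xi)}\chi_\Delta$ (non-square, $\veps=1$), or $\widehat{\tau(\xi_1)}\chi_\Delta$ ($\veps=-1$), and analyse how coset representatives $\tv_{\Ga,\ga}$ and, in the linear case, $\tdelta_{\Ga,\ga}$ act on each factor. In the square and unitary cases these representatives lie in $\wtL_{\Ga,\ga}$ and hence stabilise the linear character factor automatically (the $\tv_{\Ga,\ga}$-action on the abelianisation from Lemma \ref{lem-NL-m,a,gamma-D0}(2) is by Frobenius, which fixes $\widehat{\tau(\xi_1)}$), so the problem reduces at once to An's computation. \emph{The main obstacle} is the non-square linear case: Lemma \ref{lem-NL-m,a,gamma-D0}(1) shows that $\tdelta_{\Ga,\ga}$ does not fix $\widehat{\tau(\xi)}$ but twists it by the character $A\mapsto\omega(\xi)(|A|^{1+q+\cdots+q^{e\ell^\alp-1}})$ of $L_{\Ga,\ga}$. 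One must verify that this twist is exactly cancelled by the twist which $\tdelta_{\Ga,\ga}$ induces on $\chi_\Delta$ via the Galois action on the parameter $\Delta\in\cE'_\alp$, so that the cosets of $\wtN_{\Ga,\ga}/\wtC_{\Ga,\ga}\wtM_{\Ga,\ga}$ stabilising $\ttheta_{\Ga,\ga}$ correspond under $\pi$ to those of $N_{\Ga,\ga}/C_{\Ga,\ga}M_{\Ga,\ga}$ stabilising $\theta_{\Ga,\ga}$. This reduces to an explicit $\GL$-character identity, using the definition of $\cN_{\alp,\xi}$ from \S\ref{prelim-weights} to align the two Galois actions, after which An's count $\beta_\Ga e_\Ga$ transfers to $\wtN_{\Ga,\ga}$.
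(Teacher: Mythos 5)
Your overall plan is the same as the paper's: factor $\ttheta_{\Ga,\ga}$ as a linear character times an $\SO$-type character, count defect-zero characters of $\wtN_{\Ga,\ga}/\wtR_{\Ga,\ga}$ over $\ttheta_{\Ga,\ga}$ by the index $|\wtN_{\Ga,\ga}(\ttheta_{\Ga,\ga}):\wtC_{\Ga,\ga}\wtM_{\Ga,\ga}|$, and compute the stabilizer case by case against An's results, with the arithmetic identity handled separately. The square case and the non-square linear case are argued correctly, and you correctly flag that in the linear case $\tdelta_{\Ga,\ga}$ twists $\widehat{\tau(\xi)}$ by $\hat\xi$ and that this twist must cancel against the one on $\chi_\Delta$.

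However, there is a genuine error in your treatment of the non-square unitary case. You assert that the $\tv_{\Ga,\ga}$-action from Lemma~\ref{lem-NL-m,a,gamma-D0}(2), being Frobenius on the abelianization, \emph{fixes} $\widehat{\tau(\xi_1)}$, so that the problem ``reduces at once to An's computation.'' This is false. By construction $\xi_1\in\OF_q^\ti$ satisfies $\xi_1^{q^{e\ell^\alp}+1}=\xi$, and since $\xi$ is a non-square one has $\xi_1\notin\F_q^\ti$ (were $\xi_1\in\F_q^\ti$, then $\xi=\xi_1^{q^{e\ell^\alp}+1}=\xi_1^2$ would be a square). Hence $\xi_1^q\ne\xi_1$, and the Frobenius action sends $\widehat{\tau(\xi_1)}$ to $\widehat{\tau(\xi_1^q)}=\widehat{\tau(\xi_1)}\cdot\widehat{\xi_1^{q-1}}$, which is a \emph{different} character. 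Also note the coset representatives $\tv_{\Ga,\ga}$, $\tdelta_{\Ga,\ga}$ normalize but do not lie in $\wtL_{\Ga,\ga}$, and the square case is trivial for a different reason (there $\hxi_0$ is a character of the whole group $\wtG$, not merely of $\wtL_{\Ga,\ga}$). The paper's proof handles the unitary case by precisely the cancellation mechanism you reserved for the linear case: $^{\tv_\Ga}\ttheta_\Ga=\widehat{\tau(\xi_1)}\widehat{\xi_1^{q-1}}\chi_{F_q(\Delta)}=\widehat{\tau(\xi_1)}\chi_{\xi_1^{q-1}F_q(\Delta)}$, after which the built-in multiplicative shift $b\mapsto\xi_1b$ in the definition of $\cN_{\alp,\xi}$ for $\veps=-1$ shows that $\Ga$ is preserved, aligning the stabilizer count with An's. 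So your ``main obstacle'' is not confined to the linear case; the same argument is required, mutatis mutandis, when $\ell$ is unitary, and your proof as stated has a gap there.
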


\begin{proof}
By the construction, we have that $\wtN_{\Ga,\ga}(\ttheta_{\Ga,\ga}) = \wtN_{\Ga,\ga}(\tvtheta_{\Ga,\ga})$,
where $\tvtheta_{\Ga,\ga} = \ttheta_{\Ga,\ga}\St_\ga$ is as in \S\ref{subsect:canonical}.
Since $\wtN_{\Ga,\ga}/\wtC_{\Ga,\ga}\wtM_{\Ga,\ga} \cong \ZZ_{2e}$ is cyclic,
we may assume that $\ga=0$ and in particular $\wtC_\Ga=\wtL_\Ga$.

Assume $\xi = \xi_0^2$ ($\xi_0\in\F_q$) is a square.
Then $\ttheta_\Ga = \hxi_0 \theta_{\Ga_0}$ with $\Ga\in\scF'_1$ and $\Ga = \xi_0\cdot\Ga_0$.
Since $\hxi_0$ is a linear character of $D_0(\bV)^{vF}$,
we have that $\wtN_\Ga(\ttheta_\Ga) = \wtN_\Ga(\theta_{\Ga_0})$.
From $\Ga = \xi_0\cdot\Ga_0$,
we have that $e_\Ga = e_{\Ga_0}$.
Then the result in this case follows from the result in \cite[\S3]{An94} for $\SO(\bV)^{vF}$.

Assume $\xi$ is a non-square and $\ell$ is linear.
Then $\ttheta_\Ga = \widehat{\tau(\xi)} \theta$
with $\theta=\chi_\Delta \in \dz(C_\Ga/Z_\Ga)$ and $\Ga = \cN_{\alp_\Ga,\xi}(\Delta)$.
By Lemma \ref{lem-NL-m,a,gamma-D0} (1),
we have that $^{\tv_\Ga}\widehat{\tau(\xi)} = \widehat{\tau(\xi)}$
and $^{\tdelta_\Ga}\widehat{\tau(\xi)} = \widehat{\tau(\xi)} \cdot \hxi$,
where $\hxi$ is the image of $\xi1_{L_\Ga^*}$ under the (appropriately chosen) isomorphism
\[ \Z(L_\Ga^*) \cong \Irr(L_\Ga/[L_\Ga,L_\Ga]). \]
Note that $\tv_\Ga$ and $\tdelta_\Ga$ act on $\chi_\Delta$
as field automorphism $F_q$ and graph automorphism respectively.
Then $^{\tv_\Ga}\ttheta_\Ga = \widehat{\tau(\xi)}\chi_{F_q(\Delta)}$ and $^{\tdelta_\Ga}\ttheta_\Ga = \widehat{\tau(\xi)} \cdot \hxi\chi_{\Delta^{-1}} = \widehat{\tau(\xi)}\chi_{\xi\Delta^{-1}}$,
where the roots of $F_q(\Delta)$ and $\xi\Delta^{-1}$ are respectively the $q$-th powers of roots of $\Delta$ and $\xi$ times the inverses of roots of $\Delta$.
From this and the definition of $\cN_{\alp,\xi}$,
$\Ga$ in fact determines a $\wtN_\Ga$-conjugacy class of $\ttheta_\Ga$; see \cite[(3J)]{An94}.
The equality $m_\Ga e\ell^{\alp_\Ga} = e_\Ga\de_\Ga$ can be proved similarly as \cite[p.22]{An94}.
On the other hand,
by the above description of the actions of $\wtN_\Ga$ on $\ttheta_\Ga$,
$\wtN_\Ga(\ttheta_\Ga)$ can be determined by similar arguments in \cite[\S3]{An94}.

Assume $\xi$ is a non-square and $\ell$ is unitary.
Let $\ttheta_\Ga = \widehat{\tau(\xi_1)} \theta$
with $\theta=\chi_\Delta \in \dz(C_\Ga/Z_\Ga)$ and $\Ga = \cN_{\alp_\Ga,\xi}(\Delta)$.
By Lemma \ref{lem-NL-m,a,gamma-D0} (2),
$^{\tv_\Ga}\ttheta_\Ga = \widehat{\tau(F_q(\xi_1))}\chi_{F_q(\Delta)}
= \widehat{\tau(\xi_1)}\widehat{\xi_1^{q-1}}\chi_{F_q(\Delta)}
= \widehat{\tau(\xi_1)}\chi_{\xi_1^{q-1}F_q(\Delta)}$.
The rest is similar as the case when $\ell$ is linear.
\end{proof}

\begin{rem}
We list some special cases of the above lemma.
Assume $\xi$ is a non-square and $\ell$ is linear; recall that $e$ is odd in this case.
\begin{enumerate}[(1)]
\item
Assume $\Delta=X^2-\xi$, then $\Ga=X^2-\xi$.
Thus $\de_\Ga=\beta_\Ga=2$, $e_\Ga=e$, $m_\Ga=2$, $\alp_\Ga=0$.
Note that $\xi=\xi_0^2$ for some $\xi_0\in\F_{q^2}^\ti$ and $\xi_0^q=-\xi_0$.
From the action of $\wtN_\Ga$ in the proof of Lemma \ref{lem-tN-Gamma(theta-Gamma)},
we have $\wtN_\Ga(\ttheta_\Ga) = \wtN_\Ga$.
\item
Assume $\Delta=X\pm1$, then $\Ga=(X\pm1)(X\pm\xi) \in \scF'_{\xi,2}$.
Thus $\de_\Ga=\beta_\Ga=1$, $\veps_\Ga=1$, $e_\Ga=e$, $m_\Ga=1$, $\alp_\Ga=0$.
From the action of $\wtN_\Ga$ in the proof of Lemma \ref{lem-tN-Gamma(theta-Gamma)},
we have that $\wtN_\Ga(\ttheta_\Ga) = \grp{\wtC_\Ga,\tv_\Ga}$.
\item
Assume $\Delta=(X-b)(X-b^{q^e})$ with $b \in \F_{q^{2e}}$ and $b^{q^e+1}=\xi$,
then $\Ga=\prod_{i=0}^{2e-1}(X-b^i) \in \scF'_{\xi,1}$ (note that $b^{q^e} = \xi b^{-1}$).
Thus $\beta_\Ga=1$, $\de_\Ga=e$, $\veps_\Ga=-1$, $e_\Ga=2$, $m_\Ga=2$, $\alp_\Ga=0$.
From the action of $\wtN_\Ga$ in the proof of Lemma \ref{lem-tN-Gamma(theta-Gamma)},
we have that $\wtN_\Ga(\ttheta_\Ga) = \grp{\wtC_\Ga,\tdelta_\Ga}$.
\end{enumerate}
Assume now $\xi$ is a non-square and $\ell$ is unitary.
\begin{enumerate}[(1)]
\item
Assume $\Ga=X^2-\xi$ with $\de_\Ga=\beta_\Ga=2$.
Note that $\xi=\xi_0^2$ for some $\xi_0\in\F_{q^2}^\ti$ and $\xi_0^q=-\xi_0$.
We may assume $\xi_0=\xi_1^{\frac{q^e+1}{2}}$.
\begin{enumerate}[(i)]
\item
If $e$ is odd, then $e_\Ga=e$, $m_\Ga=2$, $\alp_\Ga=0$,
$\Delta=\left(X-\xi_1^{\frac{q^e-1}{2}}\right)\left(X-\xi_1^{-\frac{q^e(q^e-1)}{2}}\right)$
(note that $\xi_1^{\frac{q^{2e}-1}{2}} = \xi_1^{\frac{(q-1)(q^e+1)}{2}} = -1$ and $\xi_1^{-\frac{q^e(q^e-1)}{2}} = -\xi_1^{\frac{q^e-1}{2}}$).
\item
If $e$ is even, then $e_\Ga=\frac{e}{2}$, $m_\Ga=1$, $\alp_\Ga=0$, $\Delta=X-\xi_1^{\frac{q^e-1}{2}}$.
\end{enumerate}
From the action of $\wtN_\Ga$ in the proof of Lemma \ref{lem-tN-Gamma(theta-Gamma)},
we have that $\wtN_\Ga(\ttheta_\Ga) = \grp{\wtC_\Ga,\tv_\Ga}$ or $\grp{\wtC_\Ga,\tv_\Ga^2}$
according to $e$ is odd or even.
\item
Assume $\Delta=X\pm1$, then $\Ga = (X\pm\xi_1)(X\pm\xi_1^q)\cdots(X\pm\xi_1^{q^{2e-1}}) \in \scF'_{\xi,1}$.
Thus $\beta_\Ga=1$, $\veps_\Ga=-1$, $\de_\Ga=e$, $e_\Ga=1$, $m_\Ga=1$, $\alp_\Ga=0$.
Note that $\xi_1^{q^e} = \xi\xi_1^{-1}$.
From the action of $\wtN_\Ga$ in the proof of Lemma \ref{lem-tN-Gamma(theta-Gamma)},
we have that $\wtN_\Ga(\ttheta_\Ga) = \wtC_\Ga$.
\item
Assume $c \in \F_{q^{2e}}^\ti$ and $b=\xi_1c\in\F_{q^e}^\ti$
such that $\Delta=(X-c)(X-c^{-q^e})$ and $\Ga = \prod_{i=0}^{e-1}(X-b^{q^i})(X-\xi b^{-q^i}) \in \scF'_{\xi,2}$;
note that $\xi_1c^{-q^e} = \xi b^{-1}$.
Thus $\beta_\Ga=1$, $\veps_\Ga=1$, $\de_\Ga=e$, $e_\Ga=2$, $m_\Ga=2$, $\alp_\Ga=0$.
From the action of $\wtN_\Ga$ in the proof of Lemma \ref{lem-tN-Gamma(theta-Gamma)},
we have that $\wtN_\Ga(\ttheta_\Ga) = \grp{\wtC_\Ga,\tv_\Ga^e}$.
\end{enumerate}
\end{rem}

\subsection{Weights of the special Clifford groups}
\label{subsect:weights-D0}
As in \S\ref{weights-SO},
we denote $\wtR_{m_\Ga,\alp_\Ga,\ga,\bc}$ as $\wtR_{\Ga,\ga,\bc}$
when the corresponding canonical character is $\ttheta_{\Ga,\ga,\bc}$.
Let $\wtcR_{\Ga,\de}$ be the set of all the (twisted) basic subgroups
of the form $\wtR_{\Ga,\ga,\bc}$ with $\ga+|\bc|=\de$.
Label the basic subgroups in $\wtcR_{\Ga,\de}$ as $\wtR_{\Ga,\de,1},\wtR_{\Ga,\de,2},\dots$
and denote the canonical character $\ttheta_\Ga \otimes I_{\ell^\de}$ as $\ttheta_{\Ga,\de,i}$.
When $\Ga,\Ga' \in \scF'_\xi$ are such that $m_\Ga=m_{\Ga'}$ and $\alp_\Ga=\alp_{\Ga'}$,
we label the basic subgroups in $\wtcR_{\Ga,\de}$ and $\wtcR_{\Ga',\de}$
such that $\wtR_{\Ga,\de,i}=\wtR_{\Ga',\de,i}$.
Set \[ \wtsC_{\Ga,\de} = \bigcup_i
\dz\left( \wtN_{\Ga,\de,i}(\ttheta_{\Ga,\de,i})/\wtR_{\Ga,\de,i}^{tw} \mid \ttheta_{\Ga,\de,i} \right), \]
then using (\ref{equ-tN-m,alpha,gamma,c}),
we can see similarly as in \cite[(4A)]{An94} that $|\wtsC_{\Ga,\de}| = \beta_\Ga e_\Ga \ell^\de$.
Assume $\wtsC_{\Ga,\de} = \set{\tpsi_{\Ga,\de,i,j}}$,
where $\tpsi_{\Ga,\de,i,j} \in \Irr(\wtN_{\Ga,\de,i}(\ttheta_{\Ga,\de,i}))$.

Let $i\Alp(\wtG)$ be the $\wtG^*$-conjugacy classes of triples $(\ts,\ka,K)$ such that
\begin{enumerate}[(1)]
\item
$\ts$ is an $\ell'$ semisimple elements of $\wtG^*$ with multiplier $\xi$;
\item
$\ka = \prod_\Ga \ka_\Ga$,
where $\ka_\Ga$ is the $e_\Ga$-core of some partition of $m_\Ga(\ts)$ for $\Ga\notin\scF_{\xi,0}$ and
is the $e$-core of some Lusztig symbol of rank $\dfrac{m_\Ga(\ts)}{2}$ with odd defect for $\Ga\in\scF_{\xi,0}$;
\item
$K = \prod_\Ga K_\Ga$, where $K_\Ga: \bigcup_\de \wtsC_{\Ga,\de} \to \set{\ell\textrm{-cores}}$ satisfying
$\sum_{\de,i,j} \ell^\de |K_\Ga(\tpsi_{\Ga,\de,i,j})| = \omega_\Ga$ with $\omega_\Ga$ determined by
\begin{enumerate}[(i)]
\item $m_\Ga(s) = |\ka_\Ga|+e_\Ga\omega_\Ga$ if $\Ga\notin\scF_{\xi,0}$;
\item $m_\Ga(s) = 2\rk\ka_\Ga + 2e_\Ga\omega_\Ga$ if $\Ga\in\scF_{\xi,0}$.
\end{enumerate}
\end{enumerate}

Let $(\wtR,\tvphi)$ be a (twisted version of) weight of $\wtG$
and assume $\tvphi$ lies over a canonical character $\ttheta$ of $\wtC$.
Note that there are decompositions
$\wtC=\wtC_0\wtC_+$, $\wtN=\wtN_0\wtN_+$, $\ttheta = \ttheta_0\ttheta_+$ and
\[ \ttheta_+ = \prod_{\Ga,\de,i} \ttheta_{\Ga,\de,i}^{t_{\Ga,\de,i}},\quad
\wtN_+(\ttheta_+) = \prod_{\Ga,\de,i} \wtN_{\Ga,\de,i}(\ttheta_{\Ga,\de,i})^{t_{\Ga,\de,i}}\rtimes\fS(t_{\Ga,\de,i}). \]
Here, on the right hand side, $\wtN_{\Ga,\de,i}(\ttheta_{\Ga,\de,i})^{t_{\Ga,\de,i}}$ is a central product.
With the above information,
we can give a label $(\ts,\ka,K)$ in $i\Alp(\wtG)$ to $(\wtR,\tvphi)$ in the same way as in \ref{weights-SO}.

\begin{rem}
Let $(\wtR,\tvphi)$ be a weight of $\wtG$
inducing the linear character $\omega(\xi)$ of $\F_q^\ti\fe$.
When $\xi=\xi_0^2$ is a square,
let $\hxi_0$ be the linear character of $\wtG$ as in \S\ref{subsect:canonical}.
Then $(R,\hxi_0^{-1}\tvphi)$ is a weight of $H=\SO(V)$.
This gives another method for weights when $\xi$ is a square.
But the arguments we use above can give a uniform treatment
for both the cases when $\xi$ is a square or not.
\end{rem}

\subsection{Weights of spin groups}

\begin{lem}\label{lem-Lin-wtheta}
Let $\ttheta_{\Ga,\de,i}$ be a canonical character as in \S\ref{subsect:weights-D0}
and $\tz\in \Z(\wtG^*)$,
then $\hat{\tz}.\ttheta_{\Ga,\de,i} = \ttheta_{\tz.\Ga,\de,i}$.
\end{lem}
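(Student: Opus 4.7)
The plan is to reduce to the single-block case $\ttheta_\Ga$ and then argue case-by-case following the tripartite construction of \S\ref{subsect:canonical}. Since $\ttheta_{\Ga,\de,i} = \ttheta_\Ga \otimes I_{\ell^\de}$ on the wreath-product structure $\wtC_{\Ga,\de,i}$, and since multiplication by the linear character $\hat{\tz}$ (restricted through $\pi^{-1}$ to each component) commutes with the tensor construction across the central product, it suffices to prove $\hat{\tz}\cdot\ttheta_\Ga = \ttheta_{\tz\cdot\Ga}$ on $\wtC_{\Ga,\ga}$ for each $\Ga$.

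The crucial input is that the isomorphism $\Z(\wtG^*) \cong \Irr(\wtG/G)$ from (\ref{equ:Z(G*)-Lin(G)}), as well as its Levi-subgroup counterpart (\ref{equ-L-Z-Lin}), is a \emph{group} homomorphism. Consequently the linear splittings $\hxi_0$ and $\widehat{\tau(\xi)}$ (resp.\ $\widehat{\tau(\xi_1)}$) chosen in \S\ref{subsect:canonical} interact multiplicatively with $\hat{\tz}$. Explicitly:

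\textbf{Case $\xi$ a square.}  Write $\xi = \xi_0^2$ and $\Ga = \xi_0 \cdot \Ga_0$, so $\ttheta_\Ga = \hxi_0\, \theta_{\Ga_0,\ga}$. Then $\tz^2\xi = (\tz\xi_0)^2$ is again a square and $\hat{\tz}\hxi_0 = \widehat{\tz\xi_0}$ by multiplicativity, giving $\hat{\tz}\ttheta_\Ga = \widehat{\tz\xi_0}\,\theta_{\Ga_0,\ga} = \ttheta_{(\tz\xi_0)\cdot\Ga_0}$. Since the roots of $(\tz\xi_0)\cdot\Ga_0$ are $\tz\xi_0 \alpha = \tz\cdot(\xi_0\alpha)$, this polynomial equals $\tz\cdot\Ga$ by the definition of $\tz\cdot\Ga$ in \S\ref{subsec:irr-char}.

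\textbf{Case $\xi$ a non-square.}  Here $\tz^2\xi$ is again a non-square, and $\ttheta_\Ga = \widehat{\tau(\xi)}\chi_\Delta$ (resp.\ $\widehat{\tau(\xi_1)}\chi_\Delta$) with $\Ga = \cN_{\alp,\xi}(\Delta)$. Multiplicativity of (\ref{equ-L-Z-Lin}) combined with the computation $\tz\cdot\tau(\xi) = \tau(\tz^2\xi)$ for $\veps=1$, and $\tz\cdot\tau(\xi_1) = \tau(\tz\xi_1)$ for $\veps=-1$ (valid as $\tz\in\F_q^\ti$ and $(\tz\xi_1)^{q^{e\ell^\alp}+1} = \tz^{q^{e\ell^\alp}+1}\xi = \tz^2\xi$), yields $\hat{\tz}\ttheta_\Ga = \widehat{\tau(\tz^2\xi)}\chi_\Delta$ (resp.\ $\widehat{\tau(\tz\xi_1)}\chi_\Delta$), which is by definition $\ttheta_{\cN_{\alp,\tz^2\xi}(\Delta)}$. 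Inspecting the definition of $\cN_{\alp,\cdot}$ in \S\ref{prelim-weights}, the roots of $\cN_{\alp,\tz^2\xi}(\Delta)$ differ from those of $\cN_{\alp,\xi}(\Delta)$ precisely by the scalar $\tz$ (with $\xi_1$ replaced by $\tz\xi_1$ in the $\veps=-1$ case), so $\cN_{\alp,\tz^2\xi}(\Delta) = \tz\cdot\Ga$ as desired.

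The step I expect to be most delicate is the unitary case $\veps=-1$: there one must verify that replacing the chosen square root $\xi_1$ by $\tz\xi_1$ is compatible with the choice convention stated in \S\ref{subsect:canonical} (the key being that $\Res^{\wtL}_{\F_q^\ti\fe}\widehat{\tau(\xi_1)}$ is independent of the choice of $\xi_1$), and then match this replacement against the scaling of roots in $\cN_{\alp,\xi}$. Once this bookkeeping is done, the assertion for the full canonical character $\ttheta_{\Ga,\de,i}$ follows at once from the base case.
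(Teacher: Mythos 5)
Your square case and unitary non-square case are correct and match the paper's argument. However, the linear non-square case ($\xi$ a non-square, $\veps=1$) has a genuine gap. In that case $\wtC_\Ga = \F_q^\ti\fe \ti C_\Ga$ is a \emph{direct} product, so the linear character $\hat{\tz}|_{\wtC_\Ga}$ has two components: its restriction to $\F_q^\ti\fe$ (which is $\widehat{\tau(\tz^2)}$) \emph{and} a nontrivial restriction to $C_\Ga$ (nontrivial because the composite $C_\Ga \hookrightarrow \wtG \to \wtG/G$ is surjective). You account only for the first component. Consequently $\hat{\tz}\widehat{\tau(\xi)} \neq \widehat{\tau(\tz^2\xi)}$ as characters of $\wtC_\Ga$: the two differ by a nontrivial linear character of $C_\Ga$, which multiplies $\chi_\Delta$ to give $\chi_{\tz.\Delta}$. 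So the correct intermediate expression is $\hat{\tz}\ttheta_\Ga = \widehat{\tau(\tz^2\xi)}\chi_{\tz.\Delta}$, not $\widehat{\tau(\tz^2\xi)}\chi_\Delta$.

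This is not a bookkeeping fix that washes out: your subsequent assertion $\cN_{\alp,\tz^2\xi}(\Delta) = \tz.\Ga$ is false. Concretely, take $\Delta = X-1$, so $\Ga = \cN_{\alp,\xi}(\Delta) = (X-1)(X-\xi)$ and $\tz.\Ga = (X-\tz)(X-\tz\xi)$; but $\cN_{\alp,\tz^2\xi}(\Delta) = (X-1)(X-\tz^2\xi)$, which differs from $\tz.\Ga$ unless $\tz=1$. The correct identity, which requires the replacement of $\Delta$ by $\tz.\Delta$ you omitted, is $\cN_{\alp,\tz^2\xi}(\tz.\Delta) = \tz.\Ga$. (The reason your check \emph{does} go through in the unitary case is that there $\wtC_\Ga$ is not a direct product and $\hat{\tz}|_{\wtC_\Ga}$ really does equal $\widehat{\tau(\tz)}$ as a character of the whole group, and the re-choice $\xi_1 \mapsto \tz\xi_1$ in the definition of $\cN_{\alp,\xi'}$ multiplies all roots by $\tz$ directly. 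That structural difference between the two cases is exactly what your argument misses.)
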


\begin{proof}
We use the construction of canonical characters in \S\ref{subsect:canonical}
and it suffices to consider the canonical character $\ttheta_\Ga \in \dz(\wtC_\Ga/\Z(\wtR_\Ga))$.
Assume $\Ga \in \scF_\xi$.

First, assume $\xi=\xi_0^2$ ($\xi_0^2 \in \F_q$) is a square in $\F_q$.
Then $\ttheta_\Ga = \hat{\xi_0}\theta_{\Ga_0}$,
where $\hat{\xi_0}$ is the image of $\xi_0$ under the isomorphism $\Z(\wtG^*) \cong \Irr(\wtG/G)$,
$\theta_{\Ga_0} \in \dz(C_\Ga/\Z(R_\Ga))$ and $\Ga = \xi_0.\Ga_0$.
Then $\hat{\tz}\ttheta_\Ga = \hat{\tz}\hat{\xi_0}\theta_{\Ga_0} = \widehat{\tz\xi_0}\theta_{\Ga_0}
= \ttheta_{\tz\xi_0.\Ga_0} = \ttheta_{\tz.\Ga}$.

Then, assume $\xi$ is a non-square and $\ell$ is linear.
Then $\wtC_\Ga = \F_q^\ti\fe \ti C_\Ga$ and $\ttheta_\Ga = \widehat{\tau(\xi)}\theta$,
where $\tau(\xi)$ is as in \S\ref{subsect:canonical},
$\widehat{\tau(\xi)}$ is the image of $\tau(\xi)$ under the isomorphism (\ref{equ-L-Z-Lin}),
and $\theta=\chi_\Delta$ is as in \S\ref{subsect:canonical},
and $\Ga = \cN_{\alp_\Ga,\xi}(\Delta)$ with $\cN_{\alp_\Ga,\xi}$ as in \S\ref{prelim-weights}.
So $\hat{\tz}\ttheta_\Ga = (\hat{\tz}\widehat{\tau(\xi)}) (\hat{\tz}\chi_\Delta)$.
Note that $\F_q^\ti\fe/\grp{-\fe}$ is isomorphic to a subgroup of $\wtC_\Ga/[\wtC_\Ga,\wtC_\Ga]$ of index $2$,
then the isomorphisms $\Z(\wtG^*) \cong \Irr(\wtG/G)$ and (\ref{equ-L-Z-Lin})
can be chosen such that $\hat{\tz} = \widehat{\tau(\tz^2)}$ when restricted to $\F_q^\ti\fe$,
so $\hat{\tz}\widehat{\tau(\xi)} = \widehat{\tau(\tz^2\xi)}$.
Since $C_\Ga \cong \GL(m_\Ga,q^{e\ell^{\alp_\Ga}})$ and the map $C_\Ga \to \wtG/G$ is surjective,
we have $\hat{\tz}\chi_\Delta = \chi_{\tz.\Delta}$.
So $\hat{\tz}.\ttheta_\Ga = \widehat{\tau(\tz^2\xi)}\chi_{\tz.\Delta}= \ttheta_{\cN_{\alp_\Ga,\tz^2\xi}(\tz.\Delta)}
= \ttheta_{\tz.\Ga}$ by the definition of $\cN_{\alp_\Ga,\tz^2\xi}$.

Finally, assume $\xi$ is a non-square and $\ell$ is unitary.
Then $\wtC_\Ga/[\wtC_\Ga,\wtC_\Ga] \cong \mathbb Z_{(q-1)(q^{e\ell^{\alp_\Ga}}+1)}$
and $\ttheta_\Ga = \widehat{\tau(\xi_1)}\theta$,
where $\xi_1$ and $\tau(\xi_1)$ is as in \S\ref{subsect:canonical},
$\widehat{\tau(\xi_1)}$ is the image of $\tau(\xi_1)$ under the isomorphism (\ref{equ-L-Z-Lin}),
$\theta=\chi_\Delta$ is as in \S\ref{subsect:canonical},
and $\Ga = \cN_{\alp_\Ga,\xi}(\Delta)$ with $\cN_{\alp_\Ga,\xi}$ as in \S\ref{prelim-weights}.
So $\hat{\tz}.\ttheta_\Ga = \hat{\tz}\widehat{\tau(\xi_1)}\chi_\Delta$.
Note that the map $\wtC_\Ga \to \wtG/G$ is surjective,
then the isomorphisms $\Z(\wtG^*) \cong \Irr(\wtG/G)$ and (\ref{equ-L-Z-Lin})
for the case when $\ell$ is unitary
can be chosen such that $\hat{\tz} = \widehat{\tau(\tz)}$ when viewed as characters of $\wtC_\Ga$,
so $\hat{\tz}\widehat{\tau(\xi_1)} = \widehat{\tau(\tz\xi_1)}$ in this case.
So $\hat{\tz}.\ttheta_\Ga = \widehat{\tau(\tz\xi_1)}\chi_{\Delta}
= \ttheta_{\cN_{\alp_\Ga,\xi'}(\Delta)} = \ttheta_{\tz.\Ga}$
with $\xi'=(\tz\xi_1)^{q^{e\ell^{\alp_\Ga}}+1}$
by the definition of $\cN$ for the case when $\ell$ is unitary.
\end{proof}

\begin{prop}\label{Lin-Alp(tG)}
Let $(\wtR,\tvphi)$ be a weight of $\wtG$ with label $(\ts,\ka,K)$ and $\hat{\tz} \in \Lin_{\ell'}(\wtG/G)$,
then $\hat{\tz}\cdot(\wtR,\tvphi)=(\wtR,\hat{\tz}.\tvphi)$ has label $(\tz\ts,\tz.\ka,\tz.K)$.
\end{prop}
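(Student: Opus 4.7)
The plan is to track how tensoring with $\hat{\tz}$ transforms each of the three ingredients of the label $(\ts,\ka,K)$, reducing everything to Lemmas \ref{char-lin-act} and \ref{lem-Lin-wtheta}.

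First I would verify that $(\wtR,\hat{\tz}.\tvphi)$ is still a weight. Since $\hat{\tz}$ is a linear character of $\wtG$ of $\ell'$-order, its restriction to any $\ell$-subgroup of $\wtG$ is trivial; in particular $\hat{\tz}.\tvphi$ has the same degree as $\tvphi$, remains trivial on $\wtR$, and the $\ell$-defect-zero condition on $\N_{\wtG}(\wtR)/\wtR$ is preserved. The first coordinate $\wtR$ is of course unchanged.

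Next I would pass to the canonical character. Let $\ttheta\in\dz(\wtC/\wtZ)$ be the canonical character with $\tvphi\in\Irr(\wtN(\wtR)\mid\ttheta)$. Then $\hat{\tz}.\tvphi$ lies over $\hat{\tz}.\ttheta$, which is again a canonical character because $\hat{\tz}$ is trivial on the $\ell$-group $\wtZ$. Using the decomposition $\ttheta=\ttheta_0\ttheta_+$ with $\ttheta_+=\prod_{\Ga,\de,i}\ttheta_{\Ga,\de,i}^{t_{\Ga,\de,i}}$ as in \S\ref{subsect:weights-D0}, and Lemma \ref{lem-defect0} applied to the smaller special Clifford group $\wtC_0=D_0(\bV_0)^F$ to write $\ttheta_0=\chi^{\wtC_0}_{\ts_0,\ka}$, I would invoke Lemma \ref{char-lin-act}, whose proof carries over to $\wtC_0$ (itself a special Clifford group of odd dimension), to get $\hat{\tz}.\ttheta_0=\chi^{\wtC_0}_{\tz\ts_0,\tz.\ka}$; and apply Lemma \ref{lem-Lin-wtheta} to each factor to get $\hat{\tz}.\ttheta_{\Ga,\de,i}=\ttheta_{\tz.\Ga,\de,i}$.

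Finally I would translate these identifications into changes of the three label entries. For the semisimple element $\ts=\ts_0\prod_{\Ga,\de,i}\ts_{\Ga,\de,i}$, multiplication by $\tz$ carries the eigenvalues of a primary semisimple element of elementary divisor $\Ga$ onto the roots of $\tz.\Ga$, so each $\ts_{\Ga,\de,i}$ is replaced by $\tz\ts_{\Ga,\de,i}$ and hence $\ts$ becomes $\tz\ts$. The change $\ka\mapsto\tz.\ka$ is exactly the Jordan-decomposition identity for $\ttheta_0$ above. For the function $K$, the labeling convention of \S\ref{subsect:weights-D0} matches $\wtR_{\Ga,\de,i}$ with $\wtR_{\Ga',\de,i}$ whenever $(m_\Ga,\alp_\Ga)=(m_{\Ga'},\alp_{\Ga'})$, a condition that holds for $\Ga$ and $\tz.\Ga$; the bijection $\tpsi_{\Ga,\de,i,j}\mapsto\hat{\tz}.\tpsi_{\Ga,\de,i,j}$ then identifies $\wtsC_{\Ga,\de}$ with $\wtsC_{\tz.\Ga,\de}$, and under it $K_\Ga$ corresponds to $(\tz.K)_{\tz.\Ga}$ by definition. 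The main obstacle is precisely this bookkeeping step of keeping the indexings consistent across the substitution $\Ga\to\tz.\Ga$; once the conventions of \S\ref{subsect:weights-D0} are in force, the proposition reduces cleanly to the two input lemmas.
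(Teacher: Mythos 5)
Your proposal is correct and takes the same route as the paper. The paper's own proof is a one-line reference: ``This follows similarly as [Fe19, Prop.~5.12] with [Fe19, Lemma~5.10] replaced by Lemma~\ref{lem-Lin-wtheta}.'' What you have written is essentially the argument the paper outsources to that reference, fleshed out: reduce to the canonical character $\ttheta=\ttheta_0\ttheta_+$, handle $\ttheta_0$ via Lemma~\ref{char-lin-act} applied to the special Clifford group $\wtC_0$, handle each $\ttheta_{\Ga,\de,i}$ via Lemma~\ref{lem-Lin-wtheta}, and then track the bookkeeping on $(\ts,\ka,K)$, noting that $m_{\tz.\Ga}=m_\Ga$ and $\alp_{\tz.\Ga}=\alp_\Ga$ so that the labeling conventions of \S\ref{subsect:weights-D0} identify $\wtR_{\Ga,\de,i}$ with $\wtR_{\tz.\Ga,\de,i}$ and $\wtsC_{\Ga,\de}$ with $\wtsC_{\tz.\Ga,\de}$.
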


\begin{proof}
This follows similarly as \cite[Prop.~5.12]{Fe19} with \cite[Lemma 5.10]{Fe19} replaced by Lemma \ref{lem-Lin-wtheta}.
\end{proof}

\begin{lem}\label{lem:rad-spin}
Let $\wtR$ be a radical subgroup of $\wtG$.
\begin{enumerate}[\rm(1)]
\item
We have that $\wtR = \left(G\cap\wtR\right)\mathcal{O}_\ell(\Z(\wtG))$
and the map $\wtR \mapsto G\cap\wtR$ induces a bijection
between the set of radical subgroups of $\wtG$ and that of $G$.
\item
We have that $\wtC=\C_{\wtG}(\wtR)=\C_{\wtG}(G\cap\wtR)$, $\wtN=\N_{\wtG}(\wtR)=\N_{\wtG}(G\cap\wtR)$,
and thus $C:=\C_G(G\cap\wtR)=G\cap\wtC$ and $N:=\N_G(G\cap\wtR)=G\cap\wtN$.
\item
The inclusion induces surjective maps $\wtC \to \wtG/G$ and $\wtN \to \wtG/G$.
\item
The map $\wtR \mapsto G\cap\wtR$ induces a bijection
between the set of $\wtG$-conjugacy classes of radical subgroups of $\wtG$ and that of $G$.
\end{enumerate}
\end{lem}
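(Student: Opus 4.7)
The four parts are handled in order; (1) and (2) are essentially formal, (3) is the real obstacle, and (4) drops out as an immediate corollary of (3). The unifying tool is the spinor norm $N\colon\wtG\to\F_q^\ti$, which has kernel $G$ and identifies $\wtG/G$ with the cyclic group $\F_q^\ti$.

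For (1), the first step is to note that $\mathcal{O}_\ell(\Z(\wtG))$ is a central normal $\ell$-subgroup of $\N_{\wtG}(\wtR)$ and hence lies in $\wtR=\mathcal{O}_\ell(\N_{\wtG}(\wtR))$. Since $N(x\fe)=x^2$ and $\ell$ is odd, squaring is a bijection on the Sylow $\ell$-subgroup of $\F_q^\ti$, so $N(\mathcal{O}_\ell(\Z(\wtG)))$ already exhausts the full $\ell$-part of $\F_q^\ti$. Any $\ell$-subgroup image in the cyclic group $\F_q^\ti$ must lie in this $\ell$-part, so $N(\wtR)\subseteq N(\mathcal{O}_\ell(\Z(\wtG)))$; writing each $r\in\wtR$ as $gz$ with $g\in G$ and $z\in\mathcal{O}_\ell(\Z(\wtG))\le\wtR$ yields $g\in\wtR\cap G$, proving $\wtR=(G\cap\wtR)\,\mathcal{O}_\ell(\Z(\wtG))$. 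For the bijection, one then checks that $G\cap\wtR$ is radical in $G$: the identity $\N_{\wtG}(\wtR)=\N_{\wtG}(G\cap\wtR)$ (which holds because $\wtR/(G\cap\wtR)$ is central) gives $\mathcal{O}_\ell(\N_G(G\cap\wtR))\subseteq\mathcal{O}_\ell(\N_{\wtG}(\wtR))\cap G=\wtR\cap G$, and the reverse inclusion is clear. Conversely, for $R\in\Rad(G)$ the same spinor-norm projection applied to $\mathcal{O}_\ell(\N_{\wtG}(R))$ shows $R\,\mathcal{O}_\ell(\Z(\wtG))\in\Rad(\wtG)$. Part (2) is then immediate: $\wtR$ and $G\cap\wtR$ differ only by a central $\ell$-subgroup, so they have the same $\wtG$-normalizer and $\wtG$-centralizer, and intersecting with $G$ yields the $G$-versions.

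The technical core is (3). Since $\wtG/G\cong\F_q^\ti$ via $N$, the surjectivity $\wtC\to\wtG/G$ means precisely $N(\wtC)=\F_q^\ti$. The central contribution $\Z(\wtG)\le\wtC$ alone only gives $(\F_q^\ti)^2$, so one must produce a single element of $\wtC$ of non-square spinor norm. The plan is to split on $\dim\bV_0$, which is always a positive odd integer by parity, since $\dim\bV=2n+1$ is odd and every $\bV_i$ for $i\ge 1$ is even-dimensional (the trivial case $R=1$ gives $\wtC=\wtG$ directly). If $\dim\bV_0\ge 3$, choose a non-isotropic vector $x\in V_0$ with $Q(x)\notin(\F_q^\ti)^2$; the element $z_{\bV_0}x\in D_0(\bV_0)^F=\wtC_0\le\wtC$ then has spinor norm a non-zero scalar multiple of $Q(x)$ by a direct computation in the Clifford algebra. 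When $\dim\bV_0=1$, the factor $\wtC_0$ degenerates to the centre $\F_q^\ti\fe$, and one has to dig into one of the $\wtC_i$ instead: following the proof of Lemma~\ref{lem-NL-m,a,gamma-D0}, the explicit lift of $\hbar(\diag(b,1,\dots,1))\otimes I_{\ell^{\ga_i}}$ can be shown to have spinor norm a power of $N_{q^{e\ell^{\alp_i}}/q}(b)\in\F_q^\ti$; as the resulting image subgroup of $\F_q^\ti$ has index a power of $\ell$, it cannot be contained in the index-two subgroup $(\F_q^\ti)^2$, so for appropriate $b$ the element has non-square spinor norm. The parallel statement for $\wtN$ is automatic from $\wtN\supseteq\wtC$.

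Part (4) is then a single observation. The assignment $\wtR\mapsto G\cap\wtR$ is $\wtG$-equivariant, so (1) immediately yields a bijection of $\wtG$-conjugacy classes of $\Rad(\wtG)$ onto $\wtG$-conjugacy classes of $\Rad(G)$. These $\wtG$-classes coincide with the $G$-classes exactly when $\wtG=G\cdot\N_{\wtG}(R)=G\wtN$ for every $R\in\Rad(G)$, which is precisely the surjectivity established in (3).
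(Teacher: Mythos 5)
Your proof is correct and follows essentially the same route as the paper: parts (1), (2), (4) rest on the same formal observations about $G\Z(\wtG)$ being a central product and $|\wtG:G\Z(\wtG)|=2$, and the crux of (3) is the same explicit spinor-norm computation $N(\tilde A(b))=b^{1+q+\cdots+q^{\de-1}}$ (linear case) or $N(\tilde A(b_0))=b_0^{q^\de+1}$ (unitary case) on a basic component $\wtC_i$, lifted from Lemma~\ref{lem-NL-m,a,gamma-D0}. The only deviation is cosmetic: you introduce a case split on $\dim\bV_0$, dispatching $\dim\bV_0\ge 3$ via a non-isotropic vector of non-square norm in $\wtC_0$, whereas the paper works uniformly with a component $\wtC_i$ with $i\ge 1$ (which exists whenever $R\ne 1$), making the extra case unnecessary though harmless.
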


\begin{proof}
(1) and (2) follows from the facts that $\ell$ is odd,
$|\wtG:G\Z(\wtG)|=2$ and $G\Z(\wtG)$ is a central product of $G$ and $\Z(\wtG)$ over $\Z(G)$ with $|\Z(G)|=2$.

For (3), it suffices to prove $\wtC \to \wtG/G$ is surjective,
which follows if we prove that the map $\wtC_{m,\alp,\ga,\bc} \to \wtG/G$ from one component is surjective.
Without loss of generality,
we may assume as in the proof of Lemma \ref{lem-NL-m,a,gamma-D0}
that $m=1$, $\ga=0$ and set $\de:=e\ell^\alp$,
and choose a basis $\veps_1,\veps_2,\dots,\veps_\de,\eta_1,\eta_2,\dots,\eta_\de$ of $\bV_{1,\alp,0}$
as in the proof of Lemma \ref{lem-NL-m,a,gamma-D0}.
We use some constructions in Lemma \ref{lem-NL-m,a,gamma-D0}.
When $\ell$ is linear, set
\[ \tilde{A}(b) = \prod_{i=1}^\de (\veps_i+\eta_i)(\veps_i+b^{q^{i-1}}\eta_i),\quad b\in \F_{q^\de}^\ti. \]
Note that
\[ N(\tilde{A}(b)) = \prod_{i=1}^\de (\veps_i+\eta_i)(\veps_i+b^{q^{i-1}}\eta_i)(\veps_i+b^{q^{i-1}}\eta_i)(\veps_i+\eta_i)
= b^{1+q+\cdots+q^{\de-1}}. \]
Then $\tilde{A}(b) \in \wtC_{1,\alp,0}$
and the image of the set of all $\tilde{A}(b)$ with $b$ running through $\F_{q^\de}^\ti$ is already surjective.
When $\ell$ is unitary,
set \[ \tilde{A}(b_0) = b_0\prod_{i=1}^\de (\veps_i+\eta_i)(\veps_i+b^{q^{i-1}}\eta_i), \]
where $b_0 \in \bar{F}_p$ is such that $b_0^{q-1}=b$ and $o(b_0)\mid(q-1)(q^\de+1)$.
Note that \[ N(\tilde{A}(b_0))
= b_0^2\prod_{i=1}^\de (\veps_i+\eta_i)(\veps_i+b^{q^{i-1}}\eta_i)(\veps_i+b^{q^{i-1}}\eta_i)(\veps_i+\eta_i)
= b_0^2b^{1+q+\cdots+q^{\de-1}} = b_0^{q^\de+1}. \]
Then $\tilde{A}(b_0) \in \wtC_{1,\alp,0}$
and the image of the set of all $\tilde{A}(b_0)$
with $b_0$ running through all elements in $\bar{F}_p^\ti$ with order dividing $(q-1)(q^\de+1)$
is already surjective.

(4) follows from (3),
since $|\wtG:\wtN|=|G:N|$ by (3).
\end{proof}

We note that Lemma \ref{lem:rad-spin} (3) generalised \cite[Lemma 2.15 and 2.16]{FLZ19}.

\begin{prop}
Let $(\wtR,\tvphi)$ be a weight of $\wtG$ with label $(\ts,\ka,K)$.
Use the notation in the previous lemma.
Let $\vph$ be an irreducible constituent of $\Res^{\wtN}_N \tvphi$.
Then $(G\cap\wtR,\vph)$ is a weight of $G$ and all weights of $G$ can be obtained by this way.
The number of irreducible constituents of $\Res^{\wtN}_N \tvphi$ is $1$ or $2$,
and this number is $2$ if and only if $\widehat{-1}$ fixes the label $(\ts,\ka,K)$.
\end{prop}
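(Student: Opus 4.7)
The plan is to apply Clifford theory along the normal inclusion $N \unlhd \wtN$, using the isomorphism $\wtN/N \xrightarrow{\sim} \wtG/G$ from Lemma \ref{lem:rad-spin}(3) (which is cyclic of order $q-1$), and then combine this with Proposition \ref{Lin-Alp(tG)} to translate the splitting criterion into a statement about labels. I first record that $\wtR = R \cdot \cO_\ell(\Z(\wtG))$ with $R := G \cap \wtR$, that $\wtN = \N_{\wtG}(R)$ and $N = G \cap \wtN$, and that $\wtR \cap \Z(\wtG) = \cO_\ell(\Z(\wtG))$ maps onto the Sylow $\ell$-subgroup of the cyclic group $\wtG/G$; in particular $|\wtN/\wtR|_\ell = |N/R|_\ell$. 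Since $\wtN/N$ is cyclic, every $\vph \in \Irr(N)$ extends to its stabilizer, the ramification index is $1$, and the restriction decomposes multiplicity-freely as $\Res^{\wtN}_N \tvphi = \sum_{i=1}^t \vph_i$, where $t = |\Lin(\wtN/N)_{\tvphi}|$ by the standard Clifford--Mackey theory.

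Next I would establish the bound $t \le 2$. Transporting the action along $\wtN/N \cong \wtG/G$ and through the duality $\Lin(\wtG/G) \cong \Z(\wtG^*)$ of (\ref{equ:Z(G*)-Lin(G)}), any stabilizing linear character is the restriction of some $\hat\tz$ with $\tz \in \Z(\wtG^*)$. If $\hat\tz\,\tvphi = \tvphi$, restricting to $\Z(\wtG) \cap \wtN \supseteq \cO_\ell(\Z(\wtG))$ and applying Lemma \ref{lem-lc-cen} forces the multiplier $\tz^2 = 1$, whence $\tz \in \{\pm 1\}$. With $t \in \{1,2\}$ in hand and $\ell$ odd (so $t$ is coprime to $\ell$), the degree comparison $\vph(1)_\ell = \tvphi(1)_\ell / t = \tvphi(1)_\ell = |\wtN/\wtR|_\ell = |N/R|_\ell$ shows $\vph \in \dz(N/R)$, so $(R, \vph)$ is indeed a weight of $G$.

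For surjectivity, given any weight $(R,\vph)$ of $G$, I take $\wtR := R \cdot \cO_\ell(\Z(\wtG))$, the unique radical subgroup of $\wtG$ above $R$ by Lemma \ref{lem:rad-spin}(1). Extend $\vph$ to its stabilizer $\wtN_\vph \le \wtN$ and induce to an irreducible character of $\wtN$; by tensoring with an appropriate element of $\Lin(\wtN/N)$ I arrange the resulting $\tvphi \in \Irr(\wtN \mid \vph)$ to be trivial on $\wtR \cap \Z(\wtG) = \cO_\ell(\Z(\wtG))$, hence trivial on $\wtR$. The equality $|\wtN/\wtR|_\ell = |N/R|_\ell$ then forces $\tvphi \in \dz(\wtN/\wtR)$, producing the required weight of $\wtG$.

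Finally, the criterion $t=2$ reduces to $\widehat{-1}\,\tvphi = \tvphi$. The key observation is that if $(\wtR, \widehat{-1}\tvphi)$ is $\wtG$-conjugate to $(\wtR, \tvphi)$, then the conjugator normalizes $\wtR$, hence lies in $\wtN$; since inner automorphisms fix every element of $\Irr(\wtN)$, this already forces $\widehat{-1}\tvphi = \tvphi$. Combined with Proposition \ref{Lin-Alp(tG)}, which says $\widehat{-1}\cdot(\wtR,\tvphi)$ carries the label $(-\ts,-\ka,-K)$, this makes $t=2$ equivalent to $(\ts,\ka,K)$ being $\wtG^*$-conjugate to $(-\ts,-\ka,-K)$, i.e., to $\widehat{-1}$ fixing the label. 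The main technical obstacle is the Clifford-theoretic adjustment in the surjectivity step — producing an extension that is simultaneously trivial on $\wtR$ and of defect zero — but once central characters are tracked via Lemma \ref{lem-lc-cen}, this reduces to the routine theory of abelian normal extensions.
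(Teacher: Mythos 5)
The paper states this proposition without supplying a proof, so there is no argument in the source to compare against; one can only assess your proof on its own terms. Your argument is correct and is the natural one. You correctly exploit (a) the isomorphism $\wtN/N\cong\wtG/G$ from Lemma~\ref{lem:rad-spin}(3) together with the cyclicity of this quotient, so that restriction to $N$ is multiplicity-free and the number of constituents equals $|\Lin(\wtN/N)_{\tvphi}|$; (b) the central-character argument restricting to $\Z(\wtG)\subseteq\Z(\wtN)$, which via the duality $\Z(\wtG^*)\cong\Lin(\wtG/G)$ forces any stabilizing $\hat{\tz}$ to lie in the order-$2$ subgroup $\{\hat1,\widehat{-1}\}$; (c) the degree/defect calculation using $|\wtN/\wtR|_\ell=|N/R|_\ell$ (itself a consequence of $\wtR=R\cdot\cO_\ell(\Z(\wtG))$ and $\cO_\ell(\Z(\wtG))\cap G=1$); and (d) Proposition~\ref{Lin-Alp(tG)} plus injectivity of the labelling and the observation that a conjugator carrying $(\wtR,\widehat{-1}\tvphi)$ to $(\wtR,\tvphi)$ must normalize $\wtR$, hence lie in $\wtN$.

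Two small points worth tightening. First, the invocation of Lemma~\ref{lem-lc-cen} to get $\tz^2=1$ is indirect: what is really used is that the composite $\Z(\wtG^*)\to\Lin(\wtG/G)\to\Lin(\Z(\wtG))$ (the last arrow being restriction through $\Z(\wtG)G/G$) has kernel of order exactly $|\wtG:\Z(\wtG)G|=2$, hence equal to $\{\pm1\}$ in the cyclic group $\Z(\wtG^*)$; Lemma~\ref{lem-lc-cen} identifies this composite with $\tz\mapsto\omega(\tz^2)$ but you should say so. Second, in the surjectivity step the conclusion $\tvphi\in\dz(\wtN/\wtR)$ needs, in addition to $|\wtN/\wtR|_\ell=|N/R|_\ell$, that $\ell\nmid|\wtN:\wtN_\vph|$; this follows from the bound $t\le2$ applied to any $\tvphi\in\Irr(\wtN\mid\vph)$, which you established, but it should be cited explicitly at that point. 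Neither is a gap; both are routine bookkeeping.
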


Then a parametrization of weights of $G$ follows from the above proposition.

\section{Blocks of special Clifford groups and spin groups} \label{sec:blocks}

We should consider the blocks.
The results on the classification for the blocks of the finite reductive groups via $d$-cuspidal pairs (see \S \ref{subsec:pre,lie}) can be used to determined the blocks of the finite special Clifford groups $\wtG$ and in this section we  shall follow this method.

Keep the notation in \S \ref{subsec:cores}.
Let $\ts$ be a semisimple element of $\wtG^*$ with multiplier $\xi$.
For $\Ga\in\scF_\xi$, denote by $\fC_\Ga(\ts)$ the set of  $\ka_\Ga$ such that there exists $\mu_\Ga\in\Psi_\Ga(\ts)$ satisfying that $\ka_\Ga$ is an $e_\Ga$-core of $\mu_\Ga$. 
Denote $\fC(\ts):=\prod\limits_{\Ga\in\scF_\xi} \fC_\Ga(\ts)$.

\subsection{The blocks of finite special Clifford groups}\label{subsec:blocks-clifford}

Let $d=d(q,\ell)$ be defined as in (\ref{equ:def-d-ql}).
Let $\wtbL^*$ be a $d$-split Levi subgroup of $\wtbG^*$ and $\bM^*=\wtbL^*\cap \bH^*$. Then $\bM^*$ is an $F$-stable $d$-split Levi subgroup of $\bH^*$ and $\wtbL^*=\Z(\wtbG^*)\bM^*$.
Let $\bV^*=\bV_0^*\perp \bV_+^*$ be an orthogonal decomposition such that $\bM^*=\bM_0^*\ti\bM_+^*$,  $\bM_0^*=\Sp(\bV_0^*)$ and $\bM_+^*\le \Sp(\bV_+^*)$.
Let $M^*={\bM^*}^F$, $M^*_0={\bM_0^*}^F$ and $M^*_+={\bM_+^*}^F$.

Let $\wtL^*=(\wtbL^*)^F=\langle \tau, M^*_0 \rangle M^*_+$ is a central product of $\langle \tau, M_0^* \rangle$ and $M+^*$, where $\tau$ satisfies that $\wtG^*=\langle \tau, G^* \rangle$.
Here $M_0^*=\Sp(V^*_0)$ and $M_+^*$ is isomorphic to a direct product of general linear or unitary groups $\GL_{-}(\veps q^e)$.
Moreover, $\tau=\tau_0\ti\tau_+$ where $\tau_i\in\CSp(V_i^*)$ with $i\in\{0,+\}$ and $[\tau_+,M_+]=1$.

Let $\ts$ be a semisimple $\ell'$-element of $\wtG^*=(\wtbG^*)^F$ with multiplier $\xi$ such that $\ts\in \wtbL^*$.
We first suppose that $M^*_+\cong \prod\limits_{\Ga\in\scF_\xi}\prod\limits_{i=1}^{w_\Ga(\ts)} \GL_{\frac{e_\Ga \de_\Ga}{e}}(\veps q^e)$, where $w_\Ga(\ts)$ is an non-negative integer.
In order to ensure $\ts\in \wtbL^*$ we assume that $e_\Ga w_\Ga(\ts)\le m_\Ga(\ts)$.
Let $\ts=\ts_0\ti \ts_+$ be the corresponding decomposition of $\ts$ in $\wtbL^*$ so that $\ts_0\in \langle \tau, L^*_0 \rangle$ and $\ts_+\in L_+^*$.
Then $\C_{\bM^*_+}(\ts_+)$ is a torus and
$\C_{M^*_+}(\ts_+)\cong \prod\limits_{\Ga\in\scF_\xi}\prod\limits_{i=1}^{w_\Ga(\ts)} \GL_1((\veps q^e)^{e_\Ga \de_\Ga/e})$.
Let $V_0=\perp_\Ga V_{0,\Ga}$ be the decomposition of $V_0$ corresponding to $\ts_0 = \prod_\Ga \ts_{0,\Ga}$.
Then
$\C_{M^*_0}(\ts_0)=\prod\limits_{\Ga} C_\Ga(\ts_{0,\Ga})$,
where $C_\Ga(\ts_{0,\Ga})$ is defined as in \S \ref{subsec:centralizer}.
Let $\wtbL$ be an $F$-stable $d$-split Levi subgroup of $\wtbG$ dual to $\wtbL^*$ as in \cite[Prop.~13.9]{CE04}.

We let $\la=\la_0\ti\la_+$ with $\la_i\in \Irr(\C_{M^*_i}(\ts_i))$ with $i\in\{0,+\}$ and $\la_+=1_{\C_{M^*_+}(\ts_+)}$. \label{def-la0}
In addition, we write $\la_0=\prod\limits_{\Ga} \la_{0,\Ga}$ with $\la_{0,\Ga}\in\Irr(C_\Ga(\ts_{0,\Ga}))$. Let $\ka\in\fC(\ts)$.
We assume that $\ka_{\Ga}$ is the partition (resp. Lusztig symbol) corresponding to the unipotent character $\la_{0,\Ga}$ if $\Ga\in\scF_{\xi,1}\cup\scF_{\xi,2}$ (resp. $\Ga\in\scF_{\xi,0}$).
Then by \cite[Cor. 4.6.5 and 4.6.16]{GM20}, $\la_{0,\Ga}$ is a $d$-cuspidal unipotent character and thus $\la_0$ is $d$-cuspidal.
Moreover, $\la$ is also $d$-cuspidal. Let $\wtL^*=(\wtbL^*)^F$.
We mention that $[\C_{\wtL^*}(\ts),\C_{\wtL^*}(\ts)]=[\C_{M^*}(\ts),\C_{M^*}(\ts)]$, so there exists $\widetilde\la\in\Irr(\C_{\wtL^*}(\ts))$ such that $\la=\Res_{\C_{M^*}(\ts)}^{\C_{\wtL^*}(\ts)}(\widetilde \la)$.
By \cite[Lemma 2.3]{KM15}, $\widetilde\la$ be the $d$-cuspidal unipotent character of $\C_{\wtL^*}(\ts)$.
Let $\tzeta=\fJ_{\ts}^{\wtbL}(\widetilde\la)$.
Then $(\wtbL,\tzeta)$ be a $d$(-Jordan)-cuspidal pair of $\wtbG$ with $\tzeta\in\cE(\wtbL^F,\ts)$. 
We mention that the ${\wtbG}^F$-conjugacy class of $d$-cuspidal pair $(\wtbL,\tzeta)$ above can be uniquely determined by the $(\wtbG^*)^F$-conjugacy class of the pair $(\ts,\ka)$ with semisimple $\ell'$-element and $\ka\in\fC(\ts)$.

Let $\wtbG(\ts)$ be the (connected) reductive group defined as in \cite[3.1]{En08}, which is dual to $\C_{\wtbG^*}(\ts)$.
We will make use of \cite[Thm.~1.6]{En08}, a result of Enguehard and we note that this result holds for classical groups and odd primes since the Mackey formula holds for groups of classical type (cf. \cite{BM11,Ta18}).
Then by \cite[Thm.~1.6 (B)]{En08} there is a bijection between the unipotent $\ell$-blocks of $\wtbG(\ts)^F$ and the $\ell$-blocks in $\cE(\wtbG^F,\ts)$.
Let $b$ be the unipotent $\ell$-block of $\wtbG(\ts)^F$ corresponding to $b_{(\wtbG^*)^F}(\wtbL,\tzeta)$.
By \cite[Thm.]{CE94}~(or \cite[Thm.~22.9]{CE04}),
the unipotent blocks of $\wtbG(\ts)$ are labeled by unipotent $d$-cuspidal pairs. We assume that $(\wtbL(\ts),\tzeta(\ts))$ is the unipotent $d$-cuspidal pair of $\wtbG(\ts)$ such that $b=b_{\wtbG(\ts)^F}(\wtbL(\ts),\tzeta(\ts))$.
Now we recall the construction of $(\wtbL(\ts),\tzeta(\ts))$ given in \cite[\S 3]{En08}: $\wtbL(\ts)$ is dual to $\C_{\wtbL^*}(\ts)$ and $\tzeta(\ts)=\fJ_{1}^{\wtbL(\ts)}(\widetilde\la)$.

For every semisimple $\ell$-element $\tilde t$ of $C_{\wtbG^*}(\ts)$, we consider the characters in $\Irr(b)\cap \cE(\wtbG(\ts)^F,\tilde t)$ now.
Let $\xi'$ be the multiplier of $\tilde t$. Now $\xi'$ is of $\ell$-order since $\tilde t$ is an $\ell$-element, thus $\xi'\in (\F_q^\ti)^2$, \emph{i.e.}, there exists $\xi'_0\in\F_q^\ti$ such that $\xi'={\xi'_0}^2$.
Thus the only possible elementary divisor of $\tilde t$ in $\scF_{\xi',0}$ is $X-\xi_0'$.
Denote by $\wtbG(\ts)(\tilde t)$ the $F$-stable Levi subgroup of $\wtbG(\ts)$ in duality with $\C_{\wtbG(\ts)^*}(\tilde t)$.
By \cite[Thm.~(iii)]{CE94}, if $\Irr(b)\cap \cE(\wtbG(\ts)^F,\tilde t)$ is not empty, then $\wtbL(\ts)$ is also a $d$-split Levi subgroup of $\wtbG(\ts)(\tilde t)$.
In addition, $\Irr(b)\cap \cE(\wtbG(\ts)^F,\tilde t)$ are the characters occurring in some $R^{\wtbG(\ts)}_{\wtbG(\ts)(\tilde t)}(\hat {\tilde t}\cdot\tchi_{\tilde t})$, where $\tchi_{\tilde t}$ is a unipotent character of $\wtbG(\ts)(\tilde t)$ in the $d$-Harish-Chandra series
$\cE(\wtbG(\ts)(\tilde t), (\wtbL(\ts),\tzeta(\ts)))$.
By \cite[Thm.~3.3.22]{GM20}, $\fJ_{\tilde t}^{\wtbG(\ts)}(\tchi_{\tilde t})=\pm R^{\wtbG(\ts)}_{\wtbG(\ts)(\tilde t)}(\hat {\tilde t}\cdot\tchi_{\tilde t})$.

Now we consider the $d$-Harish-Chandra series
$\cE(\wtbG(\ts)(\tilde t), (\wtbL(\ts),\tzeta(\ts)))$.
Let $\bH(\ts)$ be dual to $\C_{\bH^*}(\ts)$, 
$\bM(\ts)$ be dual to $\C_{\bM^*}(\ts)$,
and $\bH(\ts)(\tilde t)$ be the $F$-stable Levi subgroup of $\bH(s)$ dual to $\C_{\bH(s)^*}(\tilde t)$.
Then the sequences
$$1\to \overline{\F}_q^\ti \fe \to \wtbG(\ts)\to \bH(\ts)\to 1$$
$$1\to \overline{\F}_q^\ti \fe \to \wtbL(\ts)\to \bM(\ts)\to 1$$
and
$$1\to \overline{\F}_q^\ti \fe \to \wtbG(\ts)(\tilde t)\to \bH(\ts)(\tilde t)\to 1$$
are exact.
Let $\la(\ts)=\fJ_{1}^{\bM(\ts)}(\la)$.
Then $\tzeta(\ts)$ is the inflation of $\la(\ts)$ to $\wtbL(\ts)^F$.
Since we only consider the unipotent characters here, we may as well transfer 
from $\wtbG(\ts)$, $\wtbG(\ts)(\tilde t)$, $\wtbL(\ts)$, $\tzeta(\ts)$
to $\bH(\ts)$, $\bH(\ts)(\tilde t)$, $\bM(\ts)$, $\la(\ts)$, \emph{i.e.}, we consider the $d$-Harish-Chandra series
$\cE(\bH(\ts)(\tilde t), (\bM(\ts),\la(\ts)))$ since $(\bM(\ts),\la(\ts))$ is a $d$-cuspidal pair of $\bH(\ts)(\tilde t)$.
We note that ${\bH(\ts)^*}^F=\prod\limits_{\Ga\in\scF_\xi}C_\Ga(\ts_\Ga)$ so that 
\begin{equation}\label{equ:Hs}
\bH(\ts)^F\cong\prod\limits_{\Ga\in\scF_{\xi,0}}\SO_{m_\Ga(\ts)+1}(q^{d_\Ga})\ti \prod\limits_{\Ga\in\scF_{\xi,1}\cup\scF_{\xi,2}}\GL_{m_\Ga(\ts)}(\veps_\Ga q^{\de_\Ga}).
\addtocounter{thm}{1}\tag{\thethm}
\end{equation}
Let $\tilde t=\prod_\Ga \tilde t_\Ga$ with $\tilde t_\Ga\in C_\Ga(\ts_\Ga)$.
Then $\C_{\bH(\ts)^*}(t)^F=\prod\limits_{\Ga\in\scF_\xi}\C_{C_\Ga(\ts_\Ga)}(\tilde t_\Ga)$.
We mention that by \ref{equ:cen-Sp},
$\C_{C_\Ga(\ts_\Ga)}(\tilde t_\Ga)=\prod\limits_{\Delta} C_{\Ga,\Delta} (\ts_\Ga,\tilde t_\Ga)$ where $\Delta$ runs through the elementary divisor of $\tilde t_\Ga$ and $C_{\Ga,\Delta}(\ts_\Ga,\tilde t_\Ga)$ is isomorphic to a general linear or unitary group if $\Ga\in \scF_{\xi,1}\cup\scF_{\xi,2}$ or $\Delta\ne X-\xi_0'$, while $C_{\Ga,\Delta} (\ts_\Ga,\tilde t_\Ga)$ is a subgroup of $C_\Ga(\ts_\Ga)$ with the same type if
$\Ga\in \scF_{\xi,0}$ and $\Delta=X-\xi_0'$.
Let $\psi$ be a unipotent character of $\bH(\ts)(\tilde t)$ with $\psi=\prod\limits_{\Ga}\prod\limits_{\Delta} \psi_{\Ga,\Delta}$, where $\psi_{\Ga,\Delta}$ is the unipotent character of $C_{\Ga,\Delta}(\ts_\Ga,\tilde t_\Ga)^*$.
Let $\mu_{\Ga,\Delta}$ be the partition corresponding to $\psi_{\Ga,\Delta}$ if $\Ga\in \scF_{\xi,1}\cup\scF_{\xi,2}$ or $\Delta\ne X-\xi_0'$ while $\mu_{\Ga,\Delta}$ the Lusztig symbol corresponding to $\psi_{\Ga,\Delta}$ if
$\Ga\in \scF_{\xi,0}$ and $\Delta=X-\xi_0'$.
Then by \cite[Cor.~4.6.5, 4.6.7 and 4.6.16]{GM20},
$\mu_{\Ga,X-\xi_0'}$ has $e_\Ga$-core $\ka_\Ga$ for every elementary divisor $\Ga$ of $\ts$.

From this, we give the characters in $\ell$-block $b_{(\wtbG^*)^F}(\wtbL,\tzeta)$.
For every semisimple $\ell$-element $\tilde t$ of $\C_{\wtbG^*}(\ts)$ so that $\C_{\wtbG(\ts)^*}(\tilde t)=\C_{\wtbG^*}(\ts\tilde t)$, by \cite[Thm.~1.6 (B.1.b)]{En08},
\begin{equation}\label{Jor-blocks-char}
\Irr(b_{(\wtbG^*)^F}(\wtbL,\tzeta))\cap \cE(\wtbG^F,\ts\tilde t)=\fJ_{\ts\tilde t}^{\wtbG}\circ (\fJ_{\tilde t}^{\wtbG(\ts)})^{-1} (\Irr(b)\cap \cE(\wtbG(\ts)^F,\tilde t)).
\addtocounter{thm}{1}\tag{\thethm}
\end{equation}
Therefore, we have the following.

\begin{thm}
\begin{enumerate}[\rm(1)]
	\item The map $(\ts,\ka)\mapsto \wtB(\ts,\ka)$ is a bijection from  the set of the $\wtG^*$-conjugacy classes of the pairs $(\ts,\ka)$, where $\ts$ is a semisimple $\ell'$-element of $\wtG^*$ and $\ka\in\fC(\ts)$, to the $\ell$-blocks of $\wtG$.
	\item Let $\tilde t$ be a semisimple $\ell$-element of $\wtG^*$ commuting with $\ts$ and $\mu'\in\Psi(\ts \tilde t)$. Then the character $\tchi_{\ts\tilde t,\mu'}$ lies in block $\wtB(\ts,\ka)$ if and only if $\mu'_{\Ga_{\tilde t}}$ has $e_\Ga$-core $\ka_\Ga$ for every elementary divisor $\Ga$ of $\ts$.
\end{enumerate}
\end{thm}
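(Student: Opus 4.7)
The plan is to combine the block classification of Kessar--Malle \cite{KM15} with Enguehard's description \cite{En08} of the characters in an $\ell$-block via unipotent blocks of the centralizer. The heavy lifting has already been done in the discussion preceding the statement, so what remains is to organize it into a bijection and then identify the characters lying in each block.

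For part (1), I would start from the Kessar--Malle labeling (recalled in \S\ref{subsec:class-blocks}): the $\ell$-blocks of $\wtG$ are in bijection with the $\wtG$-conjugacy classes of $d$-Jordan-cuspidal pairs $(\wtbL,\tzeta)$ with $\tzeta\in\cE(\wtbL^F,\ell')$, where $d=d(q,\ell)$. The construction in \S\ref{subsec:blocks-clifford} associates to each pair $(\ts,\ka)$ a $d$-cuspidal pair $(\wtbL,\tzeta)$ by choosing $\wtbL$ dual to a $d$-split Levi $\wtbL^*$ of $\wtG^*$ containing $\ts$ whose $\C_{\wtL^*}(\ts)$ has $M^*_+$ a product of the appropriate linear/unitary tori, and taking $\tzeta=\fJ_{\ts}^{\wtbL}(\widetilde\la)$ with $\widetilde\la$ the extension of the $d$-cuspidal unipotent character $\la_0=\prod_\Ga\la_{0,\Ga}$ determined by $\ka$. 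I need to check that $\wtG^*$-conjugacy classes of $(\ts,\ka)$ correspond bijectively to $\wtG$-conjugacy classes of these $(\wtbL,\tzeta)$. One direction is the construction; for the other, one reads $\ts$ off as the semisimple part in Lusztig's Jordan decomposition of $\tzeta$, and then recovers each $\ka_\Ga$ as the partition/symbol labeling the $d$-cuspidal unipotent character $\la_{0,\Ga}$ of $C_\Ga(\ts_{0,\Ga})$, using the explicit description of $d$-cuspidal unipotent characters of classical groups (\cite[Cor.~4.6.5 and 4.6.16]{GM20}): in type $\mathsf A$ these correspond to rectangular partitions of the right shape (forcing $\ka_\Ga$ to be the $e_\Ga$-core of some partition of $m_\Ga(\ts)$), and in type $\mathsf B/\mathsf C$ of the Sylow $d$-torus complement they correspond to $e$-cores of symbols of odd defect. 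This matches exactly the definition of $\fC(\ts)$, which gives the inverse map and the bijection.

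For part (2), I would invoke the bijection of \cite[Thm.~1.6]{En08} between unipotent $\ell$-blocks of $\wtbG(\ts)^F$ and $\ell$-blocks of $\wtG$ in $\cE_\ell(\wtG,\ts)$, which sends the unipotent block $b_{\wtbG(\ts)^F}(\wtbL(\ts),\tzeta(\ts))$ to $\wtB(\ts,\ka)=b_{\wtG}(\wtbL,\tzeta)$. By (\ref{Jor-blocks-char}), writing a semisimple element of $\wtG^*$ as $\ts\tilde t$ with $\tilde t$ an $\ell$-element of $\C_{\wtbG^*}(\ts)$, the characters of $\wtB(\ts,\ka)$ in $\cE(\wtG,\ts\tilde t)$ correspond under $\fJ_{\ts\tilde t}^{\wtbG}\circ(\fJ_{\tilde t}^{\wtbG(\ts)})^{-1}$ to the characters of $b_{\wtbG(\ts)^F}(\wtbL(\ts),\tzeta(\ts))\cap\cE(\wtbG(\ts)^F,\tilde t)$. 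Using the description (\ref{equ:Hs}) of $\bH(\ts)^F$ as a product of smaller orthogonal and linear/unitary factors and the factorization $\C_{C_\Ga(\ts_\Ga)}(\tilde t_\Ga)=\prod_\Delta C_{\Ga,\Delta}(\ts_\Ga,\tilde t_\Ga)$, the condition that a unipotent character $\psi=\prod_{\Ga,\Delta}\psi_{\Ga,\Delta}$ of $\bH(\ts)(\tilde t)^F$ belongs to the $d$-Harish-Chandra series of $(\bM(\ts),\la(\ts))$ becomes, via the $d$-Harish-Chandra theory of classical groups, the statement that for each $\Ga$ the partition or symbol $\mu_{\Ga,X-\xi_0'}$ labeling the factor supported on the $\ts$-isotypic part has $e_\Ga$-core $\ka_\Ga$.

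The main obstacle will be the second paragraph: verifying that the $d$-cuspidal unipotent characters of $C_\Ga(\ts_\Ga)$ are correctly labeled by $e_\Ga$-cores in the sense of \S\ref{subsec:cores}, with all signs, linear-vs.-unitary distinctions, and the split between $\scF_{\xi,0}$ and $\scF_{\xi,1}\cup\scF_{\xi,2}$ treated uniformly. The computation reduces to the classical facts about $e$-cores of partitions (\cite[\S 3]{Ol93}) and $e$-cores of symbols (\cite[p.~159]{FS89}) together with Enguehard's bijection and a careful bookkeeping of the multiplier $\xi$ through the dual-group identifications made in Lemma~\ref{lem-cen-ss-symp} and Lemma~\ref{lem-tcent}; no additional ingredient beyond these is needed.
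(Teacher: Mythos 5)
Your proposal is correct and matches the paper's approach: both derive the theorem from the construction of $(\wtbL,\tzeta)$ in \S\ref{subsec:blocks-clifford}, Enguehard's bijection \cite[Thm.~1.6]{En08}, and equation (\ref{Jor-blocks-char}) for part (2). The only minor organizational difference is at the end of part (1): you make the bijection explicit by appealing to the Kessar--Malle labeling via $d$-Jordan-cuspidal pairs and constructing an inverse $(\wtbL,\tzeta)\mapsto(\ts,\ka)$, whereas the paper closes by observing that $\bigcup_{\ka\in\fC(\ts)}\Irr(\wtB(\ts,\ka))=\cE_\ell(\wtbG^F,\ts)$ is a disjoint decomposition of the Brou\'e--Michel union of blocks, from which bijectivity follows; both routes are sound and essentially equivalent.
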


Here the polynomial $\Ga_{\tilde t}$ is defined as follows. Let $\Ga\in\scF_\xi$ for some $\xi$ be a polynomial which has a root $\alp\in\overline{\F}_q^\ti$ of $\ell'$-order and let $\tilde t$ be a semisimple $\ell$-element with multiplier $\xi'$. So there exists $\xi_0'\in\F_q^\ti$ such that $\xi'={\xi_0'}^2$.
Then we wet $\Ga_{\tilde t}$ to be the polynomial in $\scF_{\xi\xi'}$ which has a root $\alp\xi_0$.

\begin{proof}
By	the above arguments, from a pair $(\ts,\ka)$ we can define a $d$-cuspidal pair $(\wtbL,\tzeta)$ of $(\wtbG,F)$ and Thus we have an $\ell$-block
$\wtB(\ts,\ka)=b_{\wtbG^F}(\wtbL,\tzeta)$.	Then (ii) follows from the above arguments and (\ref{Jor-blocks-char}).
Also, it is easy to check that the union of $\Irr(\wtB(\ts,\ka))$ are exactly $\cE_\ell(\wtbG^F,\ts)$ when $\ka$ runs through $\fC(\ts)$. Thus we complete the proof.
\end{proof}

\subsection{The defect groups and Brauer pairs}

We give a family of Brauer pairs (associated with certain radical $\ell$-subgroups) of $\wtB=\wtB(\ts,\ka)=b_{\wtbG^F}(\wtbL,\tzeta)$.
For an $\ell$-subgroup of $H$,
we let $\wtR$ be the Sylow $\ell$-subgroup of $\pi^{-1}(R)$.  Then $\pi^{-1}(R)=\wtR\ti \mathcal O_{\ell'}(\Z(\wtG))$ and we have exact sequences
$$1\to \mathcal{O}_\ell(\F_q^\ti\fe) \to \wtR\to R\to 1,$$
$$1\to \overline{\F}_q^\ti \fe \to \C_{\wtbG}(\wtR)\to \C_{\bH}(R)\to 1$$
and 
$$1\to \F_q^\ti \fe \to \C_{\wtG}(\wtR)\to \C_{H}(R)\to 1.$$

If $R$ is a radical $\ell$-subgroup of $H=\SO(V)$, then we have a orthogonal decomposition of 
\begin{equation}\label{qua:dec-V}
V=V_0\perp V_1 \perp\cdots\perp V_u,
\addtocounter{thm}{1}\tag{\thethm}
\end{equation}
 and
$R$ is conjugate to 
\begin{equation}\label{qua:dec-R}
R_0 \ti R_1 \ti \cdots \ti R_u,
\addtocounter{thm}{1}\tag{\thethm}
\end{equation}
where $R_0$ is a trivial group of $\SO(V_0)$ and $R_i=R_{m_i,\alp_i,\ga_i,\bc_i}$ ($i\geqslant1$) is a basic subgroup of $\SO(V_i)$, as described in \S \ref{subsec:4.2}.
Thus $\C_H(R)=C_0\ti C_1\ti\cdots\ti C_u$, where $C_i\le \C_{\SO(V_i)}(R_i)$ such that $C_0=\SO(V_0)$ and $C_i\cong \GL_{m_i}(\veps q^{e\ell^{\alp_i}})$ for $i\ge 1$.
In fact, $\C_{\bH}(R)$ is a connected reductive group.
Denote $C_+=\prod\limits_{i=1}^{u} C_i$.

For $\Ga\in\scF_\xi$ for some $\xi\in\F_q^\ti$, recall that we let $m_\Ga$ and $\alp_\Ga$ be non-negative integers (determined by $\Ga$) such that $m_\Ga e\ell^{\alp_\Ga}=e_\Ga \de_\Ga$ and $\ell\nmid m_\Ga$.
We define a Brauer pair now.
Assume that $V_0=\bV_0^F$ and $V_+=\bV_+^F$  
where $V_0$ and $V_+=V_1 \perp\cdots\perp V_u$ as in (\ref{qua:dec-V}) and $\bV_0$, $\bV_+$ are defined from $(\ts,\ka)$ as in \S \ref{subsec:blocks-clifford}.
Then $\ts_i\in \CSp(V^*_i)$ for $i\in\{0,+\}$.
We let $V_+^*=\perp_\Ga(V_+^*)_\Ga$ such that $(\ts_i)_\Ga\in \CSp((V^*_+)_\Ga)$.
Thus we rewrite (\ref{qua:dec-V}) and (\ref{qua:dec-R}):
$$V=V_0\perp V_+,\ V_+=\perp_\Ga(V_+)_\Ga,\ (V_+)_\Ga=\perp_i (V_+)_{\Ga,i}$$ and 
\begin{equation}\label{equ:dec-R-bloc}
R=R_0\ti R_+,\ R_+=\prod_\Ga (R_+)_\Ga,\ (R_+)_\Ga=\prod_i (R_+)_{\Ga,i},
\addtocounter{thm}{1}\tag{\thethm}
\end{equation}
where $R_0$ is the trivial subgroup of $\SO(V_0)$ and $(R_+)_{\Ga,i}$ is a basic subgroup of $\SO((V_+)_{\Ga,i})$.
We assume that $(R_+)_{\Ga,i}=R_{m_\Ga,\alp_\Ga,*,*}$ for all $i$.

We let $\wtR$ be the Sylow $\ell$-subgroup of $\pi^{-1}(R)$.
According to \cite[Prop.~1.2.4]{En08}, there exists an $\ell$-subgroup $R'$ (resp. $\wtR'$) of ${\bH^*}^F$ (resp. $(\wtbG^*)^F$) such that $\C_{\bH}(R)$ and $\C_{\bH^*}(R')$ (resp. $\C_{\wtbG}(\wtR)$ and $\C_{\wtbG^*}(\wtR')$)
are in duality and satisfy additional properties.
We denote by $\C_{\bH}(R)^*$ (resp. $\C_{\wtbG}(\wtR)^*$) for $\C_{\bH^*}(R')$ (resp. $\C_{\wtbG^*}(\wtR')$).

Let $\bL^*_{R,\ts}:=\C_{\C_{\bH}(R)^*}(\ts)$.
Then $\C_H(R)^*=C_0^* \ti C_+^*$, where $C_0^*=\Sp(V_0^*)$ and ${\bL^*_{R,\ts}}^F=\C_{\C_H(R)^*}(\ts)=\C_{C_0^*}(\ts_0)\ti \C_{C_+^*}(\ts_+)$.
Then $\C_{C_+^*}(\ts_+)\cong \prod\limits_{\Ga,i} \GL_1(\veps^{m_\Ga} q^{e_\Ga \de_\Ga})$.
Note that $C_0=M_0$.
Then we let $\la_R=\la_0\ti 1_{C_{C_+^*}(\ts_+)}$, where $\la_0$ is a unipotent $d$-cuspidal character of $\C_{C_0^*}(\ts_0)$ defined as in page \pageref{def-la0}.
Thus $\la_R$ is a unipotent $d$-cuspidal character of ${\bL^*_{R,\ts}}^F$.
Let  $\wtbL^*_{\wtR,\ts}:=\C_{\C_{\wtbG}(\wtR)^*}(\ts)$. and let $\widetilde\la_{\wtR}$ be the  unipotent character of $\C_{\C_{\wtG}(\wtR)^*}(\ts)$ such that $\Res^{(\wtbL^*_{\wtR,\ts})^F}_{{\bL^*_{R,\ts}}^F}(\widetilde\la_R)=\la_R$.
Then $\widetilde\la_R$ is also $d$-cuspidal.
Let $\wtbL_{\wtR}^*=\C_{\C_{\wtbG}(\wtR)^*}(\Z(\wtbL^*_{\wtR,\ts})_d)$.
Then $\wtbL_{\wtR}^*$ is an $F$-stable $d$-split Levi subgroup of $\C_{\wtbG}(\wtR)^*$.
Let $\wtbL_{\wtR}$ be the $F$-stable $d$-split Levi subgroup of $\C_{\wtbG}(\wtR)$ dual to $\wtbL^*_{\wtR}$ as in \cite[Prop.~13.9]{CE04} and $\tzeta_{\wtR}=\fJ^{\wtbL_{\wtR}}_{\ts}(\widetilde\la_{\wtR})$.
Then $(\wtbL_{\wtR},\tzeta_{\wtR})$ is a $d$-cuspidal pair of $\C_{\wtbG}(\wtR)$.
Let $\fb_{\wtR}=b_{\C_{\wtbG}(\wtR)^F}(\wtbL_{\wtR},\tzeta_{\wtR})$ and $\ttheta_{\wtR}=\fJ^{\C_{\wtbG}(\wtR)}_{\ts}(\widetilde\la_{\wtR})$.
Then $\fb_{\wtR}$ is the block of $\C_{\wtG}(\wtR)$ containing $\ttheta_{\wtR}$.

\begin{prop}\label{prop:Brauer,pair}
$(\wtR,\fb_{\wtR})$ is a self-centralizing  $\wtB$-Brauer pair and $\ttheta_{\wtR}$ is the canonical character of $\fb_{\wtR}$. 
\end{prop}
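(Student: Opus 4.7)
The plan is to establish the three assertions of the proposition — that $(\wtR,\fb_{\wtR})$ is a $\wtB$-Brauer pair, that it is self-centralizing, and that $\ttheta_{\wtR}$ is the canonical character of $\fb_{\wtR}$ — by combining the $d$-cuspidal block parametrization of \cite{CE99,KM15} with Enguehard's Jordan decomposition of blocks \cite[Thm.~1.6]{En08}, now applied inside the centralizer $\C_{\wtbG}(\wtR)$. The overall strategy mirrors the treatment for orthogonal groups in \cite{CE94,En08}, but here carried out for the special Clifford group via the explicit radical-subgroup decomposition (\ref{equ:dec-R-bloc}) together with the centralizer structure computed in Lemmas \ref{lem-cen-ss-symp} and \ref{lem-tcent}.

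First I would use the decomposition $R=R_0\ti\prod_{\Ga,i}(R_+)_{\Ga,i}$ to describe $\C_{\wtbG}(\wtR)$ as a connected reductive group that contains the factor $D_0(\bV_0)$ together with general linear and unitary factors indexed by the basic components. Inside this group, $\wtbL_{\wtR}=\C_{\C_{\wtbG}(\wtR)^*}(\Z(\wtbL^*_{\wtR,\ts})_d)^*$ is a $d$-split Levi by construction, and $(\wtbL_{\wtR},\tzeta_{\wtR})$ is a $d$-cuspidal pair. The key combinatorial point is that $\Z(\wtR)$ agrees with the Sylow $\ell$-subgroup of $\Z(\wtbL_{\wtR})^F$; granted this, the $d$-cuspidal parametrization of $\fb_{\wtR}=b_{\C_{\wtbG}(\wtR)^F}(\wtbL_{\wtR},\tzeta_{\wtR})$ gives its defect group as $\Z(\wtR)$, yielding simultaneously the self-centralizing condition and that $\fb_{\wtR}$ has central defect.

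Next, to see that $(\wtR,\fb_{\wtR})$ is a $\wtB$-Brauer pair, I would show that the $d$-cuspidal pair $(\wtbL_{\wtR},\tzeta_{\wtR})$ is obtained from $(\wtbL,\tzeta)$ via the Brauer correspondence: namely, a $\wtG^F$-conjugate of $\wtbL$ can be arranged to contain $\wtR$ in the centre of $\wtbL_{\wtR}$, with $\tzeta_{\wtR}$ arising from $\tzeta$ through Jordan decomposition after passage to $\C_{\wtbG}(\wtR)$; combined with Brauer's first main theorem this identifies $\fb_{\wtR}^{\wtG}$ with $\wtB$. Finally, for the canonical-character assertion, $\ttheta_{\wtR}=\fJ^{\C_{\wtbG}(\wtR)}_{\ts}(\widetilde\la_{\wtR})$ lies in $\fb_{\wtR}$ by construction, and since $\widetilde\la_{\wtR}$ is $d$-cuspidal unipotent with the appropriate core data the analogue of Lemma \ref{lem-defect0} inside $\C_{\wtbG}(\wtR)$ forces $\ttheta_{\wtR}$ to be the unique irreducible character of $\fb_{\wtR}$ with $\cO_\ell(\Z(\C_{\wtG}(\wtR)))=\Z(\wtR)$ in its kernel. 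The main obstacle will be the combinatorial verification that $\Z(\wtR)$ coincides with the Sylow $\ell$-subgroup of $\Z(\wtbL_{\wtR})^F$: this requires matching the basic-subgroup description of $\wtR$ against the Levi-theoretic description of $\wtbL_{\wtR}$, a check made delicate by the twisted Frobenius $vF$, by the extra factor coming from the central extension $\wtbG\to\bH$, and by the need to treat uniformly the polynomials in $\scF_{\xi,0}$, $\scF_{\xi,1}$ and $\scF_{\xi,2}$.
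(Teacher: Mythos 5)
Your proposal identifies the right ingredients (Enguehard's Jordan decomposition, the $d$-cuspidal parametrization, the centralizer structure), but the argument for showing $(\wtR,\fb_{\wtR})$ is a $\wtB$-Brauer pair has a real gap. Brauer's first main theorem is a bijection between blocks of $\wtG$ with defect group $\wtD$ and blocks of $\N_{\wtG}(\wtD)$ with defect group $\wtD$; it says nothing directly about the Brauer-pair condition $\mathrm{Br}_{\wtR}(\wtB)\fb_{\wtR}=\fb_{\wtR}$ for an arbitrary radical $\ell$-subgroup $\wtR$, and $\wtR$ here is generally \emph{not} a defect group of $\wtB$. Establishing a Brauer-pair inclusion $(1,\wtB)\leq(\wtR,\fb_{\wtR})$ requires an argument that traverses the poset of Brauer pairs or invokes a structural criterion. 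The paper does exactly this by checking the hypotheses of \cite[Prop.~4.2.4]{En08}: it verifies that the unipotent $d$-cuspidal pairs $(\wtbL^*_{\ts},\widetilde\la)$ for $\wtbG$ and $(\wtbL^*_{\wtR,\ts},\widetilde\la_{\wtR})$ for $\C_{\wtbG}(\wtR)$ have the same derived Levi, $[\wtbL^*_{\ts},\wtbL^*_{\ts}]=[\wtbL^*_{\wtR,\ts},\wtbL^*_{\wtR,\ts}]$, and that $\widetilde\la$ and $\widetilde\la_{\wtR}$ restrict to the same character there. That compatibility is what Enguehard's criterion turns into a $\wtB$-Brauer pair statement; your ``Brauer correspondence + first main theorem'' step would need to be replaced by precisely this check.

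For the self-centralizing and canonical-character assertions, your approach (identify $\Z(\wtR)$ with the Sylow $\ell$-subgroup of $\Z(\wtbL_{\wtR})^F$, then appeal to the $d$-cuspidal parametrization to read off the defect group) is plausible in spirit but different from, and less direct than, the paper's. The paper avoids the delicate combinatorial matching you flag as the ``main obstacle'': it simply observes that $\Z(\wtR)\subseteq\Ker(\ttheta_{\wtR})$ because $\ts$ is an $\ell'$-element, then computes $\ttheta_{\wtR}(1)_\ell$ via \cite[Cor.~2.6.6]{GM20} (Jordan decomposition of degrees) and reduces to the defect-zero property of $\la_R$ for the group $\C_{\C_H(R)^*}(\ts)/\mathcal O_\ell(\Z(\C_{\C_H(R)^*}(\ts)))$, using \cite[Cor.~4.3.4 and 4.4.18]{GM20} and the equality $|\Z(\C_{\C_H(R)^*}(\ts))|_\ell=|\Z(R)|$. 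This degree argument is cleaner than inferring the defect group abstractly from the $d$-cuspidal label, and in particular sidesteps the issue that the defect group of $b_{\C_{\wtbG}(\wtR)^F}(\wtbL_{\wtR},\tzeta_{\wtR})$ is not simply $\Z(\wtbL_{\wtR})^F_\ell$ in general.
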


\begin{proof}
Let $\wtbL^*_{\ts}=\C_{\wtbL^*}(\ts)$.
Then following \cite[1.4.2]{En08}, $(\wtbL^*_{\ts},\widetilde\la)$ is a unipotent $d$-cuspidal pair of $\C_{\wtbG^*}(\ts)$ associated to $(\wtbL,\tzeta)$.
On the other hand,  $(\wtbL^*_{\wtR,\ts},\widetilde \la_{\wtR})$ is a unipotent $d$-cuspidal pair of $\C_{\C_{\wtbG}(\wtR)^*}(\ts)$ associated to $(\wtbL_{\wtR},\tzeta_{\wtR})$.
By the construction above, it is easy to check directly that $$[\wtbL^*_{\ts},\wtbL^*_{\ts}]=[\wtbL^*_{\wtR,\ts},\wtbL^*_{\wtR,\ts}]$$ and $$\Res^{(\wtbL^*_{\ts})^F}_{[\wtbL^*_{\ts},\wtbL^*_{\ts}]^F}(\widetilde\la)=\Res^{(\wtbL^*_{\wtR,\ts})^F}_{[\wtbL^*_{\wtR,\ts},\wtbL^*_{\wtR,\ts}]^F}(\widetilde \la_{\wtR}).$$ Hence by \cite[Prop.~4.2.4]{En08}, 
$(\wtR,\fb_{\wtR})$ is a  $\wtB$-Brauer pair.

Now we prove that $(\wtR,\fb_{\wtR})$  self-centralizing and $\ttheta_{\wtR}$ is a canonical character of $\fb_{\wtR}$, \emph{i.e.},  $\Z(\wtR)\subseteq \Ker(\ttheta_{\wtR})$ and $\ttheta_{\wtR}$ is of $\ell$-defect zero when regarding as a character of $\C_{\wtG}(\wtR)/\Z(\wtR)$.
Since $\ts$ is an $\ell'$-element, we know $\Z(\wtR)\subseteq \Ker(\ttheta_{\wtR})$.
Now we prove that $\ttheta_{\wtR}(1)_\ell=|\C_{\wtG}(\wtR)/\Z(\wtR)|_\ell$.
By \cite[Cor.~2.6.6]{GM20},
$\ttheta_{\wtR}(1)_\ell=|\C_{\wtG}(\wtR)^*:\C_{\C_{\wtG}(\wtR)^*}(\ts)|_\ell \cdot \widetilde\la_{\wtR}(1)_\ell$.
So it suffices to show 
$\widetilde\la_{\wtR}(1)_\ell=|\C_{\C_{\wtG}(\wtR)^*}(\ts)|_\ell/|\Z(\wtR)|_\ell$.
By \cite[Cor.~4.3.4 and 4.4.18]{GM20},
$\la_R$ is of $\ell$-defect zero when regarding as a character of $\C_{\C_H(R)^*}(\ts)/\mathcal{O}_\ell(\Z(\C_{\C_H(R)^*}(\ts)))$.
On the other hand, it can be checked directly that $|\Z(\C_{\C_H(R)^*}(\ts))|_\ell=|\Z(R)|$.
Thus we prove the claim and this completes the proof.
\end{proof}

\begin{rem}
Note that the canonical character $\ttheta_{\wtR}$ has been constructed in \S \ref{subsect:canonical}.
\end{rem}

Write $R^{m,\alp,\beta}$ for $R_{m,\alp,0,\bc}$ with $\bc=(1,\ldots,1)$ ($\beta$ terms).
Now we give a special radical subgroup which is then a defect group.
In (\ref{equ:dec-R-bloc}),
let $(R_+)_\Ga=\prod\limits_{i} (R^{m_\Ga,\alp_\Ga,i})^{t_i}$, where $t_i$ is integers such that $w_\Ga=\sum_i t_i\ell^i$ is the $\ell$-adic expansion of $w_\Ga$.
For such $R$, we let $\widetilde D$ be the Sylow $\ell$-subgroup of $\pi^{-1}(R)$. Let $\fb_{\widetilde D}$ be the block of $\C_{\wtG}(\widetilde D)$ defined as above.

\begin{prop}
$\wtD$ is a defect group of $\wtB$ and 
$(\wtD,\fb_{\wtD})$ is a maximal Brauer $\wtB$-pair.
\end{prop}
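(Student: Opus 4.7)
The plan is to combine Proposition \ref{prop:Brauer,pair} with an order comparison. By that proposition applied to the radical subgroup $R$ in the statement, $(\wtD, \fb_{\wtD})$ is already a self-centralizing $\wtB$-Brauer pair. Since every self-centralizing Brauer pair is contained in a maximal one, and the first component of a maximal $\wtB$-Brauer pair is a defect group of $\wtB$, it suffices to show that $|\wtD|$ equals the order of a defect group of $\wtB$, for then $\wtD$ itself must be a defect group and $(\wtD,\fb_{\wtD})$ must already be maximal.

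First, I would compute $|\wtD|$ directly from its construction. The short exact sequence $1 \to \cO_\ell(\F_q^\ti\fe) \to \wtR \to R \to 1$ gives $|\wtD| = |\cO_\ell(\F_q^\ti)| \cdot |R|$, and $R$ factors as $R_0 \ti \prod_\Ga \prod_i (R^{m_\Ga,\alp_\Ga,i})^{t_i}$ with $R_0$ trivial. Using the wreath-product formula for $R^{m,\alp,\beta} = R_{m,\alp,0} \wr A_{(1,\ldots,1)}$ (with $\beta$ wreath factors) together with the $\ell$-adic expansion $w_\Ga(\ts) = \sum_i t_i \ell^i$, this yields a closed expression for $|\wtD|$.

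Second, I would obtain the defect of $\wtB(\ts,\ka)$ by an independent route. By the Cabanes--Enguehard / Kessar--Malle classification reviewed in \S\ref{subsec:blocks-clifford}, a defect group of $b_{\wtG^F}(\wtbL,\tzeta)$ is conjugate to a Sylow $\ell$-subgroup of $\C_{\wtbG}(\Z^\circ(\wtbL)_\ell^F)^F$. Using duality together with the explicit factorization (\ref{equ:Hs}), this centralizer splits into a product over $\Ga \in \scF_\xi$, and for each $\Ga$ its $\ell$-part is the Sylow $\ell$-subgroup of a $\GL$-type or $\SO$-type factor whose rank is controlled by $w_\Ga(\ts)$.

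Third, and this is the main obstacle, one must match the two computations $\Ga$-by-$\Ga$. For $\Ga \in \scF_{\xi,1} \cup \scF_{\xi,2}$, the relevant factor is $\GL_{m_\Ga(\ts)}(\veps_\Ga q^{\de_\Ga})$, and the standard (Weir-style) description of its Sylow $\ell$-subgroup as an iterated wreath product, with building blocks indexed by the $\ell$-adic digits of $w_\Ga(\ts)$, matches $\prod_i (R^{m_\Ga,\alp_\Ga,i})^{t_i}$ exactly. For $\Ga \in \scF_{\xi,0}$, the factor is an $\SO$-type group and $\ka_\Ga$ is a Lusztig symbol, so one instead invokes An's analysis \cite{An94} of radical $\ell$-subgroups of $\SO_{2n+1}(q^{d_\Ga})$ to identify the same basic-subgroup product. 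Finally, the central factor $\cO_\ell(\F_q^\ti)$ contributed by lifting through $\pi^{-1}$ matches $|\Z(\wtG)|_\ell$ on the dual side. Once the two orders are seen to agree, $\wtD$ is a defect group of $\wtB$, the pair $(\wtD, \fb_{\wtD})$ is a maximal $\wtB$-Brauer pair, and the proposition follows.
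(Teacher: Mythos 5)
Your overall strategy is sound: since Proposition~\ref{prop:Brauer,pair} already shows $(\wtD,\fb_{\wtD})$ is a self-centralizing $\wtB$-Brauer pair, an order comparison with a defect group of $\wtB$ would indeed finish the argument, because a self-centralizing Brauer pair is contained in a maximal one and the orders force equality. That part of your plan matches the logical skeleton of what needs to happen.

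The gap is in your second step, where you assert that a defect group of $b_{\wtbG^F}(\wtbL,\tzeta)$ is conjugate to a Sylow $\ell$-subgroup of $\C_{\wtbG}(\Z^\circ(\wtbL)_\ell^F)^F$ and attribute this to the Cabanes--Enguehard / Kessar--Malle block classification. That classification (reviewed in \S\ref{subsec:blocks-clifford}) parametrizes blocks by $d$-Jordan-cuspidal pairs; it does not hand you this centralizer formula for defect groups, and for a non-unipotent block such a formula is not available in that form. The paper instead invokes Enguehard's Jordan decomposition of blocks, \cite[Thm.~1.6]{En08}, which states that a defect group of $b_{\wtbG^F}(\wtbL,\tzeta)$ is isomorphic to a defect group of the associated unipotent block $b_{\wtbG(\ts)^F}(\wtbL(\ts),\tzeta(\ts))$; this dominates the unipotent block $b_{\bH(\ts)^F}(\bM(\ts),\la(\ts))$, and via the central extension by $\F_q^\ti\fe$ the defect groups differ by exactly the Sylow $\ell$-subgroup of the center. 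Since $\bH(\ts)^F$ decomposes as in (\ref{equ:Hs}) into general linear/unitary and special orthogonal factors, one can read off defect groups factor-by-factor from \cite{FS82} and \cite{FS89}, getting exactly the products of iterated wreath products $\prod_i(R_{\alp_\Ga}^{i})^{t_i}$ dictated by the $\ell$-adic expansions $w_\Ga=\sum_i t_i\ell^i$. This matches $R$, hence $\wtD$ has the right order, and $(\wtD,\fb_{\wtD})$ is a $\wtB$-Brauer pair by Proposition~\ref{prop:Brauer,pair}. Your third and fourth steps (the factor-by-factor matching via Weir/An-type descriptions) are correct in spirit, but without the Enguehard reduction the order computation for the defect group of $\wtB$ is left unjustified. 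You would need to replace the asserted centralizer formula by the reduction through \cite[Thm.~1.6]{En08} (or an equivalent Jordan-decomposition-of-blocks statement) to close the gap.
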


\begin{proof}
By \cite[Thm.~1.6]{En08}, the defect groups of block $b_{\wtbG^F}(\wtbL,\tzeta)$ and block $b_{\wtbG(\ts)^F}(\wtbL(\ts),\tzeta(\ts))$ are isomorphic.
On the other hand, the block $b_{\wtbG(\ts)^F}(\wtbL(\ts),\tzeta(\ts))$ dominates block
$b_{\bH(\ts)^F}(\bM(\ts),\la(\ts))$.
Note that
$$1\to \F_q^\ti \fe \to \wtbG(\ts)^F\to \bH(\ts)^F\to 1.$$
Then if  $R'$ is a defect group of $b_{\bH(\ts)^F}(\bM(\ts),\la(\ts))$, then the Sylow $\ell$-subgroup of $\pi^{-1}(R')$ is a defect group of $b_{\wtbG(\ts)^F}(\wtbL(\ts),\tzeta(\ts))$.

By (\ref{equ:Hs}), $\bH(\ts)^F$ is a direct product of special orthogonal groups and general linear and unitary groups.
We rewrite $\bH(\ts)^F=\prod\limits_{\Ga\in \scF_\xi} H(\ts)_\Ga$, where $H(\ts)_\Ga\cong\SO_{m_\Ga(\ts)+1}(q^{d_\Ga})$ if $\Ga\in\scF_{\xi,0}$, while
$H(\ts)_\Ga\cong\GL_{m_\Ga(\ts)}(\veps_\Ga q^{\de_\Ga})$ if $\Ga\in\scF_{\xi,1}\cup\scF_{\xi,2}$.
Let $b_{\bH(\ts)^F}(\bM(\ts),\la(\ts))=\boxtimes_\Ga b_\Ga$, where $b_\Ga$ is a unipotent block of $H(\ts)_\Ga$.
In addition, the unipotent characters in $\Irr(b_\Ga)$ correspond to the partitions or Lusztig symbols whose $e_\Ga$-core are $\ka_\Ga$. 
The defect group $R'=\prod\limits_{\Ga\in\scF_\xi}R'_\Ga$, where $R'_\Ga$ is a defect group of $b_\Ga$.
On the other hand, the defect groups of general linear and unitary groups (resp. special orthogonal groups) are determined in \cite{FS82} (resp. \cite{FS89}).

We let $R_{\alp}^i$ be the iterated wreath product $\mathbb Z_{\ell^{a+\alp_\Ga}}\wr \mathbb Z_\ell\cdots \wr\mathbb Z_\ell$ (with $i$ factors $\mathbb Z_\ell$).
If $w_\Ga=\sum_i t_i\ell^i$ is the $\ell$-adic expansion of $w_\Ga$, then $R_\Ga\cong \prod\limits_i (R_{\alp_\Ga}^{i})^{t_i}$ by \cite{FS82,FS89}.
So $R'\cong R$.
Therefore, the group $\wtD$ defined above is isomorphic to a defect group of $\wtB$. 
On the other hand, $(\wtD,\fb_{\wtD})$ is a $\wtB$-Brauer pair by Proposition \ref{prop:Brauer,pair}.
So the assertion holds.
\end{proof}

\subsection{The blocks of finite spin groups}

In this section, we determine the number of $\ell$-blocks of $\bG^F=G=\Spin(V)$ covered by a given $\ell$-block of $\wtbG^F=\wtG$. This was considered by Enguehard \cite[\S 3]{En08}.
The method used in this section is also similar with those in \cite[\S 4]{Fe19} and \cite[\S 5.2 and Appendix B]{FLZ19}, where the number of blocks of $\mathrm{SL}_n(\pm q)$, $\Sp_{2n}(q)$ and $\SO_{2n}^\pm(q)$ covered by a fixed block of $\mathrm{GL}_n(\pm q)$, $\CSp_{2n}(q)$ and $\CSO_{2n}^\pm(q)$ respectively are given by considering $d$-cuspidal pairs.

\begin{lem}\label{relation-blocks}
	\begin{enumerate}[\rm(1)]
		\item Let $(\wtbL,\tzeta)$ be an $d$-cuspidal pair of $\wtbG$ and $B$ an $\ell$-block of $\bG^F$ covered by $\wtB=b_{\wtbG^F}(\wtbL,\tzeta)$, then $B=b_{\bG^F}(\bL,\zeta)$, where $\bL=\wtbL\cap \bG$ and $\zeta\in \Irr(\bL^F\mid \tzeta)$.
		\item Let $(\bL,\zeta)$ be an $d$-cuspidal pair of $\bG$ and $\wtB$ an $\ell$-block of $\wtbG^F$ which covers $B=b_{\bG^F}(\bL,\zeta)$, then $\wtB=b_{\wtbG^F}(\wtbL,\tzeta)$ for some $d$-cuspidal pair $(\wtbL,\tzeta)$ of $\wtbG$ satisfying that $\bL=\wtbL\cap \bG$ and $\zeta\in\Irr(\bL^F\mid \tzeta)$.
	\end{enumerate}
\end{lem}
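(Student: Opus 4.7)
\medskip

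The plan is to exploit the regular embedding $\bG \le \wtbG$ together with the compatibility between Lusztig induction and restriction along this embedding, plus Clifford theory applied at the level of Levi subgroups. Throughout, recall that $\wtbG = \bG\,\Z(\wtbG)$ and $\bG = [\wtbG,\wtbG]$, so that $\wtbG/\bG$ is a connected torus; in particular, $\wtbG^F/\bG^F\Z(\wtG)^F$-style quotients are abelian.

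For (1), I would first observe that if $\wtbL$ is a $d$-split Levi of $\wtbG$, then $\bL := \wtbL \cap \bG$ is a $d$-split Levi of $\bG$: indeed $\wtbL = \C_{\wtbG}(\Z^\circ(\wtbL)_d)$ implies $\bL = \C_{\bG}(\Z^\circ(\bL)_d)$ using $\wtbL = \bL \Z(\wtbG)$ and that the $d$-torus of $\Z(\wtbG)$ lies in every Levi. Next, since $\wtbL^F/\bL^F$ is abelian and any $\zeta \in \Irr(\bL^F \mid \tzeta)$ is obtained by Clifford restriction, the $d$-cuspidality of $\tzeta$ transfers to $\zeta$: any proper $d$-split Levi $\bM$ of $\bL$ gives a proper $d$-split Levi $\wtbM = \bM\Z(\wtbG)$ of $\wtbL$, and the commutation of Lusztig restriction with $\Res^{\wtbL^F}_{\bL^F}$ (which holds in our classical-type setting via the Mackey formula, cf. \cite{BM11,Ta18}) shows $\lc{*}R^{\bL}_{\bM}(\zeta)=0$. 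Finally, by transitivity of Lusztig induction and the same compatibility with restriction, any irreducible constituent of $R^{\bG}_{\bL}(\zeta)$ appears in $\Res^{\wtbG^F}_{\bG^F} R^{\wtbG}_{\wtbL}(\tzeta)$, so it lies in a block of $\bG^F$ covered by $\wtB$. Since $\wtB$ covers $B$ and these constituents all lie in the unique block $b_{\bG^F}(\bL,\zeta)$, we conclude $B=b_{\bG^F}(\bL,\zeta)$.

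For (2), I would start by setting $\wtbL := \bL \Z(\wtbG)$ and checking that $\wtbL$ is an $F$-stable $d$-split Levi of $\wtbG$ with $\wtbL \cap \bG = \bL$ (using $\Z(\wtbG) \cap \bG = \Z(\bG) \le \bL$). Then, since $\wtB$ covers $B$, standard Clifford theory of blocks produces a character $\tchi \in \Irr(\wtB)$ lying over some $\chi \in \Irr(B)$; by the Brou\'e--Michel-type description of $\wtB$ as the unique block of $\wtG$ containing the irreducible constituents of $R^{\wtbG}_{\wtbL}(\tzeta)$ (for the candidate $\tzeta$), I would reverse this: take any $\tzeta \in \Irr(\wtbL^F \mid \zeta)$. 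The extension (or Clifford) theory is applicable since $\wtbL^F/\bL^F$ is abelian, and I would check that $\tzeta$ is $d$-cuspidal by essentially the same $\lc{*}R$-argument as in (1), now going from $\bL$ back up to $\wtbL$ (using that every proper $d$-split Levi of $\wtbL$ has the form $\bM\Z(\wtbG)$ for a proper $d$-split Levi $\bM$ of $\bL$). Finally I would verify that the resulting block $b_{\wtbG^F}(\wtbL,\tzeta)$ indeed covers $B$, and then use the classification of blocks by $d$-cuspidal pairs (\S\ref{subsec:class-blocks}) together with the fact that distinct $\wtbG^F$-orbits of $d$-cuspidal pairs give distinct blocks to conclude $\wtB = b_{\wtbG^F}(\wtbL,\tzeta)$.

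The main obstacle will be the careful bookkeeping of $d$-cuspidality under the regular embedding, in particular establishing the two-way correspondence of proper $d$-split Levi subgroups of $\bL$ and $\wtbL$ and the compatibility of Lusztig (co)induction with $\Res^{\wtbL^F}_{\bL^F}$ and $\Ind^{\wtbL^F}_{\bL^F}$. Once this is in place, both (1) and (2) follow formally from the parametrization of $\ell$-blocks by $d$-Jordan-cuspidal pairs recalled in \S\ref{subsec:class-blocks}, together with the uniqueness statement in \cite[Thm.~A]{KM15}.
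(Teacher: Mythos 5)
The paper's own proof simply cites \cite[Lemmas~3.7 and 3.8]{KM15} together with earlier work of the authors, so your sketch is plausibly the kind of argument those references contain; but as written there is a logical gap in the closing step of each part. In (1) you show that \emph{some} block $b_{\bG^F}(\bL,\zeta)$ with $\zeta\in\Irr(\bL^F\mid\tzeta)$ is covered by $\wtB$, but the given $B$ is merely \emph{a} block covered by $\wtB$, and $\wtB$ can cover several. Thus ``$\wtB$ covers $B$ and also covers $b_{\bG^F}(\bL,\zeta)$, therefore $B=b_{\bG^F}(\bL,\zeta)$'' is a non-sequitur --- applied verbatim for each $\zeta$ it would force all the $b_{\bG^F}(\bL,\zeta)$ to coincide, which need not hold. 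What is missing is the orbit argument: the blocks of $\bG^F$ covered by $\wtB$ form a single $\wtbG^F$-conjugacy class, and $\wtbG^F=\wtbL^F\bG^F$ (Lang's theorem applied to the connected group $\bL$, using $\wtbL/\bL\cong\wtbG/\bG$). Writing $g=\tilde l g'$ with $\tilde l\in\wtbL^F$, $g'\in\bG^F$, one gets $\lc{g}b_{\bG^F}(\bL,\zeta)=b_{\bG^F}(\bL,\lc{\tilde l}\zeta)$ with $\lc{\tilde l}\zeta\in\Irr(\bL^F\mid\tzeta)$ by Clifford theory, so the whole orbit has the required shape.

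Part (2) has the dual gap: choosing ``any'' $\tzeta\in\Irr(\wtbL^F\mid\zeta)$ gives \emph{some} block covering $B$, and to identify it with the given $\wtB$ you need that the blocks of $\wtbG^F$ covering $B$ form a single orbit under $\Lin(\wtbG^F/\bG^F)$, matching the transitive action of $\Lin(\wtbL^F/\bL^F)$ on $\Irr(\wtbL^F\mid\zeta)$. Finally, you correctly identify but do not actually establish the descent of $d$-(Jordan-)cuspidality along the regular embedding and the compatibility of $\lc{*}R$ with $\Res^{\wtbL^F}_{\bL^F}$; beware that $\Res^{\wtbL^F}_{\bL^F}\tzeta$ is a sum of $\wtbL^F$-conjugates of $\zeta$, so $\lc{*}R^{\bL}_{\bM}(\Res\tzeta)=0$ does not by itself give $\lc{*}R^{\bL}_{\bM}(\zeta)=0$ --- one needs an extra argument to rule out cancellation. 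These points are exactly the content of the Kessar--Malle lemmas the paper cites, and invoking them directly would make the proof both shorter and airtight.
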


\begin{proof}
	This follows by \cite[Lemma~3.7 and 3.8]{KM15} and is entirely analogous with \cite[Prop.~4.5]{Fe19} and \cite[Prop.~5.6]{FLZ19}.
\end{proof}

\begin{lem}\label{lem:num-covered}
Let $\wtB=b_{\wtbG^F}(\wtbL,\tzeta)$ be an $\ell$-block of $\wtbG^F$ and $\bL=\wtbL\cap\bG$.
Then the number of $\ell$-blocks of $\bG^F$ covered by $\wtB$ is  $|\Irr(\bL^F\mid \tzeta)|$.
\end{lem}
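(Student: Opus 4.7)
The plan is to show that the map
\[ \Phi:\ \Irr(\bL^F\mid\tzeta) \longrightarrow \{\ell\text{-blocks of } \bG^F \text{ covered by } \wtB\},\quad \zeta \mapsto b_{\bG^F}(\bL,\zeta), \]
is a bijection. Its surjectivity is exactly Lemma~\ref{relation-blocks}(1), so all the content is in injectivity. Write $L=\bL^F$, $\wtL=\wtbL^F$, and $B=\Phi(\zeta)$.

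First I would make explicit the common $\wtG/G$-action on both sides. Since $\wtbL$ is a $d$-split Levi of $\wtbG$, it contains $\Z^{\circ}(\wtbG)$, so $\wtbG=\bG\cdot\wtbL$ and therefore $\wtG=G\wtL$, yielding a canonical identification $\wtG/G\cong \wtL/L$. Clifford theory applied to $L\unlhd\wtL$ shows that $\Irr(L\mid\tzeta)$ is a single $\wtL$-orbit, hence a transitive $(\wtL/L)$-set. On the other hand, by the standard Clifford theory for block covering, the $\ell$-blocks of $G$ covered by $\wtB$ form a single $\wtG$-orbit, which (since inner automorphisms of $\wtG$ fix blocks) factors to a transitive $(\wtG/G)$-set. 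The map $\Phi$ is manifestly equivariant, so bijectivity reduces to equality of stabilizers.

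Concretely, I must show $\wtG_{B}\cap\wtL = \wtL_{\zeta}$; the inclusion $\supseteq$ is immediate. For the other direction, take $x\in\wtG_{B}\cap\wtL$. By the Kessar--Malle classification of blocks via $d$-cuspidal pairs, $B^{x}=B$ gives $g\in N_{G}(\bL)$ with $\zeta^{x}=\zeta^{g}$, and so it is enough to prove that $N_{G}(\bL)$ acts trivially on $\Irr(L\mid\tzeta)$. I would obtain this via the following Clifford-theoretic observation: for any $n\in N_{G}(\bL)$, conjugation by $n$ is an inner $\wtG$-automorphism so fixes $\wtB=b_{\wtG}(\wtbL,\tzeta)$; Kessar--Malle then supplies $y\in N_{\wtG}(\wtbL)$ with $\tzeta^{ny}=\tzeta$. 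Using the decomposition $N_{\wtG}(\wtbL)=\wtL\cdot N_{G}(\bL)$ (which follows from $\wtbL=\bL\cdot\Z(\wtbG)$ and $\wtG=G\wtL$), one rewrites $ny$ as a product of an element of $\wtL$ stabilizing $\tzeta$ and an element of $N_{G}(\bL)$, and tracking the residual action on the constituent $\zeta$ via the fact that $\Irr(\wtL\mid\zeta)$ is an $\Irr(\wtL/L)$-torsor forces $\zeta^{n}=\zeta$.

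The main obstacle will be this last step, the bookkeeping that promotes an equality $\tzeta^{ny}=\tzeta$ on the $\wtL$-character to an equality $\zeta^{n}=\zeta$ on its $L$-constituent. This is structurally identical to the situation treated in \cite[Prop.~4.5]{Fe19} for the regular embedding $\mathrm{SL}_{n}\unlhd\mathrm{GL}_{n}$ and in \cite[Prop.~5.6]{FLZ19} for $\Sp_{2n}\unlhd\CSp_{2n}$; since in our setting $\bG\unlhd\wtbG$ is again a regular embedding with abelian cokernel, I would model the argument verbatim on those references, applying the results of \S\ref{subsec:blocks-clifford} to the parameterization $\wtB=\wtB(\ts,\ka)$ to verify the required compatibility of labels under the $\Z(\wtG^{*})$-action from Lemma~\ref{lem-lc-cen} and Proposition~\ref{Lin-Alp(tG)}.
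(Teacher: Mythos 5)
Your overall reduction agrees with the paper's: surjectivity is Lemma~\ref{relation-blocks}(1), and injectivity is equivalent to the statement that two distinct characters $\zeta_1\neq\zeta_2\in\Irr(\bL^F\mid\tzeta)$ give $\bG^F$-non-conjugate $d$-cuspidal pairs $(\bL,\zeta_1)$, $(\bL,\zeta_2)$ --- in your language, that $\N_{G}(\bL)$ acts trivially on $\Irr(L\mid\tzeta)$. The $\wtG/G$-equivariance/transitivity scaffolding you set up is correct but purely cosmetic; it reduces to exactly this non-conjugacy claim, which is what the paper reduces to directly via the Kessar--Malle labelling.

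The gap is in your attempt to prove that claim. The paper disposes of it with a single citation, \cite[Prop.~2.3.2]{En08}, which is a genuine structural result about $d$-cuspidal pairs. Your replacement argument does not close it:
(i) the assertion that $\Irr(\wtL\mid\zeta)$ is an $\Irr(\wtL/L)$-torsor holds only if $\zeta$ is $\wtL$-invariant, which is not given and need not hold (if $\Res^{\wtL}_{L}\tzeta$ has more than one constituent then $\zeta$ is moved by some element of $\wtL$, and the ``torsor'' has the wrong cardinality);
(ii) the ``tracking'' step that is meant to upgrade $\tzeta^{m}=\tzeta$ (for $m\in\N_G(\bL)$) to $\zeta^{n}=\zeta$ is not supplied, and the implicit inference ``$m$ fixes $\tzeta$ $\Rightarrow$ $m$ fixes $\zeta$'' is false in general --- $m$ could permute the constituents of $\Res^{\wtL}_{L}\tzeta$ nontrivially while fixing $\tzeta$;
(iii) the references you propose to ``model verbatim'' on, \cite[Prop.~4.5]{Fe19} and \cite[Prop.~5.6]{FLZ19}, are the analogues of Lemma~\ref{relation-blocks} (the paper cites them there), not of the counting statement; the analogues of the present lemma in those papers themselves rest on \cite[Prop.~2.3.2]{En08}. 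In short, you have correctly identified the key fact but have not proved it; the proof should simply invoke Enguehard's result as the paper does.
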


\begin{proof}
By Lemma \ref{relation-blocks}, each $\ell$-block of $G$ covered by $\wtB$ is of form $b_{\bG^F}(\bL,\zeta)$, where $\bL=\wtbL\cap \bG$ and $\zeta\in \Irr(\bL^F\mid \tzeta)$.
By the classification of blocks of finite reductive groups (see \S \ref{subsec:class-blocks}), in order to prove this lemma, it suffices to show that any two $d$-cuspidal pairs $(\bL,\zeta_1)$ and $(\bL,\zeta_2)$ with $\zeta_1, \zeta_2\in \Irr(\bL^F\mid \tzeta)$ are not $\bG^F$-conjugate if $\zeta_1\ne\zeta_2$, which follows by \cite[Prop.~2.3.2]{En08}.
\end{proof}

Let $z_0=-1_{V^*}$ be as in the proof of Theorem  \ref{thm:char-spin}. For any blocks $\wtB$ of $\wtG$. Then $\wtB$ covers one or two blocks of $G$ since $|\wtG/\Z(\wtG) G|=2$. So by Corollary \ref{cor:num-covered}, we have

\begin{cor}\label{cor:num-covered}
Let $\wtB=b_{\wtbG^F}(\wtbL,\tzeta)$.
Then $\wtB$  covers two blocks of $G$ if and only if $\hat z_0 \tzeta=\tzeta$.
\end{cor}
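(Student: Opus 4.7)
The plan is to mimic the character-theoretic argument of Theorem \ref{thm:char-spin}, combined with Lemma \ref{lem:num-covered}. By that lemma, the number of $\ell$-blocks of $G$ covered by $\wtB$ equals $|\Irr(\bL^F\mid\tzeta)|$, which the discussion preceding the corollary already restricts to $\{1,2\}$ since $|\wtG/\Z(\wtG)G|=2$. I would like to rewrite this count in terms of the action of $\Z(\wtG^*)\cong\Irr(\wtG/G)$ by $\tzeta\mapsto \hat{\tilde z}\tzeta$, and then invoke the cyclic structure of $\Z(\wtG^*)$ exactly as in the proof of Theorem \ref{thm:char-spin}.

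The crucial intermediate step will be to show that the natural map $\wtbL^F/\bL^F \hookrightarrow \wtG/G$ is an isomorphism. Since $\wtbL$ is a Levi it contains $\Z(\wtbG)$; together with $\wtbG=\bG\,\Z(\wtbG)$ from \S\ref{notation-spaces-groups} this gives $\wtbG=\bG\wtbL$, yielding an $F$-equivariant isomorphism of algebraic tori $\wtbL/\bL\cong\wtbG/\bG$. Taking $F$-fixed points, Lang's theorem applied to the connected Levi $\bL$ (so $H^1(F,\bL)=1$) upgrades this to $\wtbL^F/\bL^F\cong(\wtbG/\bG)^F=\wtG/G$. Consequently, every linear character of $\wtbL^F/\bL^F$ is the restriction of $\hat{\tilde z}$ for a unique $\tilde z\in\Z(\wtG^*)$, via \eqref{equ:Z(G*)-Lin(G)}.

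Applying Clifford theory to $\bL^F\triangleleft\wtbL^F$ (with cyclic quotient), the number $|\Irr(\bL^F\mid\tzeta)|$ equals the size of the stabilizer
\[
 N_{\tzeta}:=\{\,\tilde z\in\Z(\wtG^*)\mid \hat{\tilde z}\tzeta=\tzeta\,\},
\]
so $|N_{\tzeta}|\in\{1,2\}$. Since $\Z(\wtG^*)\cong\F_q^\times$ is cyclic of order $q-1$, its only element of order $2$ is $z_0=-1_{V^*}$, and hence any subgroup of order $2$ must equal $\{1,z_0\}$. Therefore $|N_{\tzeta}|=2$ if and only if $z_0\in N_{\tzeta}$, i.e.\ $\hat{z_0}\tzeta=\tzeta$, which is exactly the claim. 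The main obstacle I anticipate is verifying the isomorphism in the second paragraph carefully; once that is in place, the remainder is essentially a direct adaptation of the argument for Theorem \ref{thm:char-spin}.
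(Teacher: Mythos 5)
Your proposal is correct and follows the same route the paper takes: the paper's own (very terse) argument also reduces the count to $|\Irr(\bL^F\mid\tzeta)|$ via Lemma~\ref{lem:num-covered}, observes that this number is $1$ or $2$ because $|\wtG/\Z(\wtG)G|=2$, and then leaves the final identification with the condition $\hat z_0\tzeta=\tzeta$ as routine. What you add is a careful justification of the two steps the paper treats as implicit: that $\wtbL^F/\bL^F\cong\wtG/G$ (your Lang-theorem argument for the regular embedding restricted to the Levi is exactly the standard one), and that for a normal subgroup with cyclic quotient the number of constituents of the restriction equals the order of the stabilizer of $\tzeta$ under multiplication by $\Lin(\wtbL^F/\bL^F)\cong\Z(\wtG^*)$; combined with cyclicity of $\Z(\wtG^*)\cong\F_q^\times$, this pins down the order-$2$ subgroup as $\{1,z_0\}$, matching the claim.
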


Let  $\cE:=\cE_1$ be defined as in (\ref{def-cE}).
Then $\cE$ is the set of elementary divisors for the semisimple conjugacy classes of $\GL_{-}(\veps q^e)$.
For $\Delta\in \cE$, if $\alp$ is a root of $\Delta$ in $\overline{\F}_q^\ti$, then we define $-\Ga$ to be the polynomial in $\cE$ such that $-\alp$ is a root of $-\Ga$.

For $\Ga\in\scF_\xi$ with $\xi\in\F_q^\ti$, we define  $\Ga_{(e)}$ to be a polynomial in $\cE$ such that $\Ga$ and $\Ga_{(e)}$ have a common root in $\overline{\F}_q^\ti$.
Thus $\Ga=\mathcal N_{1,\xi}(\Ga_{(e)})$.
Note that $\Ga_{(e)}$ is not uniquely defined. However, we note that whether $\Ga_{(e)}$ is $\Ga$ or not does not depend on the choice of $\Ga_{(e)}$.


\begin{thm}
Let $\wtB=\wtB(\ts,\ka)$ where $\ts$ is a semisimple $\ell'$-element of $\wtG^*=(\wtbG^*)^F$ with multiplier $\xi$ and $\ka\in\fC(s)$. 	Then   $\wtB$  covers two blocks of $G$ if and only if
	$(-\Ga)_{(e)}=\Ga_{(e)}$ for every $\Ga\in\scF_\xi$ with $w_\Ga>0$ and $\ka_\Ga=\ka_{-\Ga}$ for every $\Ga\in\scF_\xi$.
\end{thm}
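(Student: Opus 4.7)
The plan is to apply Corollary \ref{cor:num-covered}: since $|\wtG/G\Z(\wtG)|=2$, $\wtB$ covers two blocks of $G$ precisely when $\hat{z_0}\tzeta=\tzeta$. One half of the theorem, the equality $\ka_\Ga=\ka_{-\Ga}$, will follow from a necessary block-level condition: since $\hat{z_0}\tzeta\in\hat{z_0}\wtB$, the equality with $\tzeta\in\wtB$ forces $\hat{z_0}\wtB=\wtB$. Unpacking this via the block classification of \S\ref{subsec:blocks-clifford} and the explicit action $\hat{z_0}\wtB(\ts,\ka)=\wtB(-\ts,-\ka)$ with $(-\ka)_\Ga=\ka_{-\Ga}$, one obtains $\ka_\Ga=\ka_{-\Ga}$ for every $\Ga\in\scF_\xi$ together with $m_\Ga(\ts)=m_{-\Ga}(\ts)$.

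Next I descend to the Levi to derive the second half. By the construction in \S\ref{subsec:blocks-clifford}, $\tzeta=\tchi_{\ts,\mu}$ in $\Irr(\wtL^F)$, where $\mu_\Ga=\ka_\Ga$ for $\Ga$ supported in $M_0^*$ and $\mu_\Ga$ is trivial for $\Ga$ supported in $M_+^*$. The Levi analog of Lemma \ref{char-lin-act} (valid since $\Z(\wtbL)$ is connected) gives $\hat{z_0}\tzeta=\tchi_{-\ts,z_0.\mu}$, and equality with $\tzeta$ is equivalent to the pairs $(\ts,\mu)$ and $(-\ts,z_0.\mu)$ lying in a common $\wtbL^{*F}$-orbit. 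Using the central product $\wtbL^{*F}=\Grp{\tau,M_0^{*F}}\cdot M_+^{*F}$ of Lemma \ref{lem-tcent}(4), I decompose this conjugacy. In the direct-product part $M_+^{*F}=\prod_{\Ga,i}\GL_{e_\Ga\de_\Ga/e}(\veps q^e)$ conjugation is factor-wise, and the slot-element $\ts_{+,\Ga,i}$ with characteristic polynomial $\Ga_{(e)}$ is $\GL$-conjugate to $-\ts_{+,\Ga,i}$ (characteristic polynomial $(-\Ga)_{(e)}$) iff $(-\Ga)_{(e)}=\Ga_{(e)}$; this delivers the second condition of the theorem. In the $\Grp{\tau,M_0^{*F}}$-part, once $\ka_\Ga=\ka_{-\Ga}$ is assumed, the $M_0^{*F}$-conjugacy $\ts_0\sim-\ts_0$ follows from equality of characteristic polynomials (using $m_\Ga(\ts_0)=|\ka_\Ga|$ or $2\rk\ka_\Ga$), and the character matching $g.\mu=z_0.\mu$ is automatic from the natural swap of $\Ga$- and $-\Ga$-factors of $\C_{M_0^*}(\ts_0)^F$.

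For the converse, the condition $(-\Ga)_{(e)}=\Ga_{(e)}$ forces $-\Ga=\Ga$ in $\scF_\xi$ (else the two polynomials would have disjoint root sets), whence $w_{-\Ga}=w_\Ga$; combined with $\ka_\Ga=\ka_{-\Ga}$ and the formulas of \S\ref{subsect:weights-D0}, this yields $m_\Ga(\ts)=m_{-\Ga}(\ts)$, so no extra hypothesis is needed for the block-level condition. The main obstacle I foresee is the $M_0^*$-analysis: one must verify that a $\wtbL^{*F}$-conjugating element induces the natural swap of $\Ga$- and $-\Ga$-factors and hence the expected label matching, handling in particular the possible contribution of $\tau$ (which need not centralise $\C_\Ga(\ts_\Ga)$ for $\Ga\in\scF_{\xi,0}$, cf.\ Lemma \ref{lem-tcent}(1)) and the Lusztig-symbol---rather than partition---labels attached to these $\Ga$.
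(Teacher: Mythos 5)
Your proof follows the paper's strategy exactly: apply Corollary~\ref{cor:num-covered}, use the equivariance of Jordan decomposition to reduce $\hat{z_0}\tzeta=\tzeta$ to a $\wtL^{*F}$-conjugacy of pairs, and split this condition through the central product $\wtL^*=\langle\tau,M_0^*\rangle M_+^*$ into a core part and a Coxeter-torus part --- the paper then simply cites \cite[Cor.~4.12]{Fe19} for the translation to the stated combinatorics, whereas you attempt to spell it out. Your preliminary derivation of $\ka_\Ga=\ka_{-\Ga}$ (and $m_\Ga(\ts)=m_{-\Ga}(\ts)$) from the weaker block-level identity $\hat{z_0}\wtB=\wtB$ is a clean observation not made explicit in the paper, and the subtleties you flag at the end (the $\tau$-twist across the central product on $\scF_{\xi,0}$-factors, and the Lusztig-symbol versus partition bookkeeping) are precisely the points the deferred argument must address, so your self-assessment that the $M_0^*$-analysis is not fully discharged is accurate but the outline is sound and matches the paper's.
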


\begin{proof}
	We make use of Corollary \ref{cor:num-covered}.
Recall that $\tzeta=\fJ_{\ts}^{\wtbL}(\widetilde\la)$.
Then by \cite[Thm.~4.7.1~(3)]{GM20},
$\hat z_0 \tzeta=\fJ_{-\ts}^{\wtbL}(\widetilde\la)$.
So $\hat z_0 \tzeta=\tzeta$ if and only if $\fJ_{\ts}^{\wtbL}(\widetilde\la)=\fJ_{-\ts}^{\wtbL}(\widetilde\la)$.

Recall from the beginning of \S \ref{subsec:blocks-clifford} that $\ts_0\in \langle \tau, L^*_0 \rangle$ and $\ts_+\in L_+^*$.
Thus
$\fJ_{\ts}^{\wtbL}(\widetilde\la)=\fJ_{-\ts}^{\wtbL}(\widetilde\la)$ if and only if $(\ts_0,\ka)$ and $(-\ts_0,-\ka)$ are $\langle \tau, L^*_0 \rangle$-conjugate and $\ts_+$ and $-\ts_+$ are $L^*_+$-conjugate.
Then this assertion is similar with the arguments in the proof of \cite[Cor.~4.12]{Fe19}.
\end{proof}

\begin{rmk}
	Let $B$ be a block of $G$ covered by $\wtB=\wtB(\ts,\ka)$ where $\ts$ is a semisimple $\ell'$-element of $\wtG^*=(\wtbG^*)^F$ with multiplier $\xi$. Then the blocks of $\wtG$ covering  $B$ are $\wtB(\tilde z\ts,\ka)$, where $\tilde z$ runs through $\mathcal O_{\ell'}(\Z(\wtG^*))$.
Precisely,
\begin{enumerate}[(i)]
	\item if  $\ka_\Ga=\ka_{-\Ga}$ for every $\Ga\in\scF_\xi$, then there are $(q-1)_{\ell'}/2$ blocks of $\wtG$ covering $B$, \emph{i.e.}, $\wtB(\tilde z\ts,\ka)$ where $\tilde z$ runs through a complete set of representatives of $\langle -1_{V^*}\rangle$-cosets in $\mathcal O_{\ell'}(\Z(\wtG^*))$, and
	\item otherwise there are $(q-1)_{\ell'}$ blocks of $\wtG$ covering $B$, \emph{i.e.}, $\wtB(\tilde z\ts,\ka)$ where $\tilde z$ runs through $\mathcal O_{\ell'}(\Z(\wtG^*))$.
\end{enumerate}
\end{rmk}

\section{The inductive conditions}\label{proof-main}

\subsection{}
First, we give an equivariant bijection for $\wtG$.

\begin{prop}\label{field-action-weights}
Let $(\wtR,\tvphi)$ be a weight of $\wtG$ with label $(\ts,\ka,K)$ and $\sigma \in \langle F_p \rangle$,
then $(\wtR,\tvphi)^\sigma$ has label $(\sigma^*(\ts),{^{\sigma^*}\!\ka},{^{\sigma^*}\!K})$.
\end{prop}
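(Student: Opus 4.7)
The plan is to unwind the construction of weights in Section \ref{sec:weights} and check that each ingredient in the label $(\ts,\ka,K)$ transforms $\sigma$-equivariantly. Using the decompositions $\wtR=\wtR_0\ti\wtR_+$, $\tvphi=\tvphi_0\otimes\tvphi_+$, and $\wtC=\wtC_0\wtC_+$, $\wtN=\wtN_0\wtN_+$ from \S\ref{subsect:weights-D0}, together with the fact that $\sigma$ commutes with $\pi$ and preserves the orthogonal decomposition $\bV=\bV_0\perp\bV_+$ up to $\wtG$-conjugacy, it suffices to treat the two pieces separately.

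For the $\bV_0$-part, $\tvphi_0\in\dz(D_0(\bV_0)^F)$, and by Lemma \ref{lem-defect0} it is of the form $\tchi_{\ts_0,\ka}$ with $\ts_0$ an $\ell'$-semisimple element of $\CSp(\bV_0^*)^F$ and each $\ka_\Ga$ an $e_\Ga$-core. Proposition \ref{field-action-IBr} then gives directly $\tvphi_0^\sigma=\tchi_{\sigma^*(\ts_0),\lc{\sigma^*}\ka}$. Note that the assignment $\Ga\mapsto\lc{\sigma^*}\Ga$ preserves the partition of $\scF_\xi$ into $\scF_{\xi,0}$, $\scF_{\xi,1}$, $\scF_{\xi,2}$, as well as the integers $\de_\Ga$ and $\veps_\Ga$, and hence $e_\Ga$; so cores of partitions and of Lusztig symbols are sent to cores of the corresponding kind.

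For the $\bV_+$-part, the crux is the equivariance identity for canonical characters,
\[ \ttheta_{\Ga,\de,i}^\sigma = \ttheta_{\lc{\sigma^*}\Ga,\,\de,\,\sigma(i)} \]
for a natural re-indexing $i\mapsto\sigma(i)$ of basic subgroups in $\wtcR_{\lc{\sigma^*}\Ga,\,\de}$. I would prove this by following the construction in \S\ref{subsect:canonical}: $\ttheta_{\Ga,\ga}$ is built as $\hxi_0\,\theta_{\Ga_0,\ga}$ when the multiplier $\xi$ is a square (with $\Ga=\xi_0\cdot\Ga_0$), and as $\widehat{\tau(\xi)}\chi_\Delta$ or $\widehat{\tau(\xi_1)}\chi_\Delta$ in the two non-square cases (with $\Ga=\cN_{\alp,\xi}(\Delta)$). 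The linear-character factor transforms via the natural $\sigma^*$-action on $\Z(\wtG^*)\cong\Irr(\wtG/G)$, while $\chi_\Delta^\sigma=\chi_{\lc{\sigma^*}\Delta}$ by the standard action on characters of the appropriate $\GL_m(\veps q^{e\ell^\alp})$. Combined with the compatibility of the maps $\xi_0\cdot$ and $\cN_{\alp,\xi}$ with $\sigma^*$, this yields the displayed identity. Once it is in place, $\sigma$ permutes the sets $\wtsC_{\Ga,\de}$ equivariantly, and inspecting the assignment $\tvphi_+\mapsto K$ from \S\ref{weights-SO}--\S\ref{subsect:weights-D0} shows that the induced function is precisely $\lc{\sigma^*}K$; the compatibility $m_\Ga(\ts)=m_{\lc{\sigma^*}\Ga}(\sigma^*(\ts))$ ensures the matching of the multiplicities $\omega_\Ga$ on both sides.

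The main obstacle is the bookkeeping in the non-square cases: the isomorphism (\ref{equ-L-Z-Lin}) depends on choices of generators $\zeta,\zeta_1$, and one must check that $\widehat{\tau(\xi)}^\sigma$ and $\widehat{\tau(\xi_1)}^\sigma$ are exactly the characters dual to $\sigma^*(\tau(\xi))$ and $\sigma^*(\tau(\xi_1))$. Lemma \ref{lem-NL-m,a,gamma-D0} and the explicit matrix computations in the proof of Lemma \ref{lem-tN-Gamma(theta-Gamma)} reduce this to a direct calculation once $\sigma^*$-compatible choices of $\zeta,\zeta_1$ are fixed; no new ideas beyond those of Section \ref{sec:weights} are required.
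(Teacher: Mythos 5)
Your proposal is correct in spirit and follows essentially the same route as the paper intends: the paper's own ``proof'' of Proposition~\ref{field-action-weights} is a one-line reference to the analogous argument in \cite[\S6.C]{Li19}, which likewise proceeds by unwinding the combinatorial parameterization of weights and checking $\sigma^*$-equivariance component by component. Your two-part decomposition into the $\bV_0$-piece (handled via Lemma~\ref{lem-defect0} and Proposition~\ref{field-action-IBr}) and the $\bV_+$-piece (handled via equivariance of the canonical characters $\ttheta_{\Ga,\de,i}$) matches the structure of that reference and of the weight construction in \S\ref{subsect:weights-D0}.

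A few points where you should be more careful if you were to write this out in full. First, Proposition~\ref{field-action-IBr} as stated concerns $\Irr(\wtG)$, not $\Irr(D_0(\bV_0)^F)$; you need to note that the same Jordan-decomposition argument applies verbatim to the smaller special Clifford group $D_0(\bV_0)^F$ since it has connected center. Second, the decomposition you write as $\wtR=\wtR_0\ti\wtR_+$ is a central product over $\cO_\ell(\F_q^\ti\fe)$ (with $\wtR_0$ merely the central $\ell$-part), not a direct product, and similarly $\tvphi$ decomposes as a character of a central product rather than a genuine tensor product; this is the same bookkeeping the paper handles explicitly in the proof of Proposition~\ref{local-condition}. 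Third, and most substantively, your claimed identity $\ttheta_{\Ga,\de,i}^\sigma=\ttheta_{\lc{\sigma^*}\Ga,\de,\sigma(i)}$ does require verifying that $\sigma^*$ interacts correctly with the maps $\xi_0\cdot$ and $\cN_{\alp,\xi}$ and with the character-theoretic isomorphism~(\ref{equ-L-Z-Lin}); you correctly flag this as the main obstacle. The verification parallels Lemma~\ref{lem-Lin-wtheta} (which does the same for the $\Lin_{\ell'}(\wtG/G)$-action) and poses no essential difficulty once one fixes $\sigma^*$-compatible roots $\zeta,\zeta_1$, so your assessment that no new ideas are needed is sound.
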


\begin{proof}
This can be proved by similar method as in \cite[\S6.C]{Li19}.
\end{proof}

\begin{prop}\label{equiv-bijection}
There is a blockwise bijection between $\cE(\wtG,\ell')$ and $\Alp(\wtG)$ equivariant under the action of $\Lin_{\ell'}(\wtG/G)\rtimes\langle F_p \rangle$.
\end{prop}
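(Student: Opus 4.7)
The plan is to construct an explicit combinatorial bijection on the labels, then check blockwise compatibility and equivariance using the results established in the previous sections.

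First, I would fix a semisimple $\ell'$-element $\ts \in \wtG^*$ with multiplier $\xi$ and restrict attention to $\cE(\wtG, \ts)$ on one side and to weights with semisimple part $\ts$ on the other. The map
\[
(\ts, \mu) \longmapsto (\ts, \ka, K)
\]
is defined componentwise across the polynomial labels $\Ga \in \scF_\xi$. For each such $\Ga$, take $\ka_\Ga$ to be the $e_\Ga$-core of $\mu_\Ga$ (a partition when $\Ga \in \scF_{\xi,1} \cup \scF_{\xi,2}$, a Lusztig symbol of odd defect when $\Ga \in \scF_{\xi,0}$) in the sense of \S\ref{subsec:cores}, producing an $e_\Ga$-quotient whose total size is exactly the $\omega_\Ga$ appearing in clause~(3) of the definition of $i\Alp(\wtG)$. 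I would then apply the $\ell$-core/$\ell$-quotient procedure to the pieces of this quotient, together with a fixed identification of the indexing set with $\bigcup_\de \wtsC_{\Ga,\de}$ --- possible because $|\wtsC_{\Ga,\de}| = \beta_\Ga e_\Ga \ell^\de$ matches the wreath-product combinatorics of the $e_\Ga$-quotient --- in order to read off the function $K_\Ga$. This parallels the constructions of \cite{An94}, \cite{FS82}, \cite{FS89} for the underlying orthogonal groups; my task reduces to checking it lifts correctly under the central extension $\wtG \to H$ because in both the character and weight parameterizations of Sections~\ref{sec:irr-cl} and~\ref{sec:weights} the label $\Ga$ already records the multiplier data.

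Second, I would verify the bijection is blockwise. By the block classification in \S\ref{subsec:blocks-clifford}, $\tchi_{\ts',\mu'} \in \cE_\ell(\wtG, \ts)$ lies in $\wtB(\ts, \ka)$ precisely when $\mu'_{\Ga_{\tilde t}}$ has $e_\Ga$-core $\ka_\Ga$; on the weight side, the block containing a weight with label $(\ts, \ka', K)$ is $\wtB(\ts, \ka')$ by construction in \S\ref{subsect:weights-D0} (applied to the canonical character of $\C_{\wtG}(\wtR)$, which has $\ts$ as semisimple part and $\ka'$ as its core data, matching Proposition~\ref{prop:Brauer,pair}). Since our bijection is defined by extracting the core $\ka$, block preservation is automatic.

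Third, I would check equivariance. Under $\hat{\tz} \in \Lin_{\ell'}(\wtG/G)$, Lemma~\ref{char-lin-act} shows $\tchi_{\ts,\mu} \mapsto \tchi_{\tz\ts, \tz.\mu}$, while Proposition~\ref{Lin-Alp(tG)} shows $(\ts, \ka, K) \mapsto (\tz\ts, \tz.\ka, \tz.K)$; since the $\tz$-translation merely relabels polynomials $\Ga \leftrightarrow \tz.\Ga$ without altering the intrinsic core/quotient combinatorics of the component indexed by $\Ga$, the bijection commutes with this action. The same reasoning applies to $\sigma \in \grp{F_p}$: Propositions~\ref{field-action-IBr} and~\ref{field-action-weights} both describe the action as $(\ts, \bullet) \mapsto (\sigma^*(\ts), \lc{\sigma^*}\!\bullet)$, which is again a relabeling of the polynomial index.

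The main obstacle I expect is the bookkeeping in the first step, namely matching the $\ell$-quotient of the $e_\Ga$-quotient with the set $\wtsC_{\Ga,\de}$ --- here the parameters $\de, i, j$ index, respectively, the wreath factor $A_{\bc}$ with $\ga + |\bc| = \de$, the basic subgroup $\wtR_{\Ga,\de,i}$, and the irreducible character in $\dz(\wtN_{\Ga,\de,i}(\ttheta_{\Ga,\de,i})/\wtR_{\Ga,\de,i} \mid \ttheta_{\Ga,\de,i})$, and one must track how the $\ell$-adic expansion of $\omega_\Ga$ corresponds to successive $\ell$-core/$\ell$-quotient splittings. Once this matching is pinned down (it follows essentially the same pattern used for type $\mathsf A$ in \cite{FLZ20a} and for $\SO_{2n+1}(q)$ in \cite{An94}, with the new ingredient being Lemma~\ref{lem-tN-Gamma(theta-Gamma)} which ensures the counts agree on the Clifford side), the equivariance and blockwise properties reduce to formal compatibility checks at the level of $\Ga$-indexed data.
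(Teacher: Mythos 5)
Your proposal is correct and follows essentially the same route as the paper: the bijection on $\Ga$-indexed labels via $e_\Ga$-core/$e_\Ga$-quotient and the $\ell$-adic matching against $\wtsC_{\Ga,\de}$ (underwritten by Lemma~\ref{lem-tN-Gamma(theta-Gamma)} and the wreath-product combinatorics of \cite{AF90}), then blockwise compatibility from Proposition~\ref{prop:Brauer,pair}, then equivariance from Lemma~\ref{char-lin-act} and Propositions~\ref{field-action-IBr}, \ref{Lin-Alp(tG)}, \ref{field-action-weights}. The only superficial difference is direction: you build the map $(\ts,\mu)\mapsto(\ts,\ka,K)$ by extracting cores and quotients from $\mu$, whereas the paper constructs the inverse by reassembling $\mu_\Ga$ from $(\ka_\Ga, K_\Ga)$; these are the same bijection.
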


\begin{proof}
Then for each $\Ga$, $K_\Ga$ corresponds to a $\beta_\Ga e_\Ga$-tuples of partitions
using the method in \cite[(1A)]{AF90}.
When $\Ga\notin\scF_{\xi,0}$,
the $e_\Ga$-core $\ka_\Ga$ and the $e_\Ga$-tuple of partitions give a partition $\mu_\Ga$ of $m_\Ga(\ts)$.
Assume $\Gamma\in\scF_{\xi,0}$.
Since $\ka_\Ga$ is non-degenerate of odd defect,
$\ka_\Ga$ and the $2e_\Ga$-tuple of partitions above give a Lusztig symbol $\mu_\Ga$ of rank $m_\Ga(\ts)/2$ of odd defect; see \cite[\S4]{Li19}.
Then the triple $(\ts,\ka,K)$ corresponds to a $\tchi_{\ts,\mu}$ as in \S\ref{subsec:irr-char}.
This gives a bijection between $\cE(\wtG,\ell')$ and $\Alp(\wtG)$.
The bijection is blockwise by the results in Proposition \ref{prop:Brauer,pair}.
The equivariance follows from Lemma \ref{char-lin-act}, Proposition \ref{field-action-IBr},
Proposition \ref{Lin-Alp(tG)} and Proposition \ref{field-action-weights}.
\end{proof}

Recall that $\cE(\wtbG^F,\ell')$ and $\cE(\bG^F,\ell')$ are basic sets for $\wtbG^F$ and $\bG^F$ respectively.
Thus we have proved

\begin{cor}
	The blockwise Alperin weight conjecture holds for both $\wtG$ and $G$.
\end{cor}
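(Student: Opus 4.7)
The plan is to handle $\wtG$ directly from Proposition \ref{equiv-bijection} together with a basic set argument, and then to descend to $G$ by Clifford theory using the equivariance built into that proposition. For $\wtG$, by Assumption \ref{uni-assumption}(i) and Theorem \ref{basicset}, $\cE(\wtG,\ell')$ is a unitriangular basic set respecting the block decomposition, so $|\IBr(\wtB)| = |\cE(\wtG,\ell') \cap \Irr(\wtB)|$ for every $\ell$-block $\wtB$ of $\wtG$. Proposition \ref{equiv-bijection} supplies a blockwise bijection between $\cE(\wtG,\ell')$ and $\Alp(\wtG)$, so the right hand side equals $|\Alp(\wtB)|$, and this settles the conjecture for $\wtG$.

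For $G$, I would fix an $\ell$-block $B$ of $G$ and let $\wcB$ denote the union of $\ell$-blocks of $\wtG$ covering $B$; by Corollary \ref{cor:num-covered}, $\wcB$ consists of one or two blocks, and it is stable under $\Lin_{\ell'}(\wtG/G)$ by Clifford theory. Each $\tpsi \in \IBr(\wcB)$ satisfies $|\IBr(G\mid\tpsi)| = |\Lin_{\ell'}(\wtG/G)_{\tpsi}|$, and by the last proposition of Section \ref{sec:weights} each weight $(\wtR,\tvphi) \in \Alp(\wcB)$ restricts to exactly $|\Lin_{\ell'}(\wtG/G)_{(\wtR,\tvphi)}|$ distinct weights of $G$ lying in $B$. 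Since the bijection of Proposition \ref{equiv-bijection} is $\Lin_{\ell'}(\wtG/G)$-equivariant and blockwise, it preserves both stabilizers and the covering relation; summing over $\Lin_{\ell'}(\wtG/G)$-orbits in $\IBr(\wcB)$ and $\Alp(\wcB)$ yields $|\IBr(B)| = |\Alp(B)|$.

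The main obstacle, already addressed by the preceding sections, is the compatibility of the Clifford correspondences for characters and weights with the block-covering description of Section \ref{sec:blocks}. Concretely, one must know that the combinatorial label $(\ts,\ka,K)$ attached to elements of $\IBr(\wcB)$ (via the bijection of Proposition \ref{equiv-bijection}) transforms under $\Lin_{\ell'}(\wtG/G)$ in the same way as the corresponding label on $\Alp(\wcB)$. This is exactly Lemma \ref{char-lin-act} combined with Proposition \ref{Lin-Alp(tG)}, so once these are in hand the descent from $\wtG$ to $G$ is essentially bookkeeping.
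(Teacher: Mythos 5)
Your approach is the descent argument the paper leaves implicit (it simply says ``thus we have proved'' after recalling the basic sets), and the orbit-counting reduction under $\Lin_{\ell'}(\wtG/G)$ is sound in outline. A few things should be fixed.

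Assumption~\ref{uni-assumption} plays no role here: Theorem~\ref{basicset} already gives $|\IBr(\wtB)| = |\cE(\wtG,\ell')\cap\Irr(\wtB)|$ for each block $\wtB$ (a basic set for a union of blocks respects individual blocks because the decomposition matrix is block-diagonal), so the corollary is unconditional; unitriangularity is only invoked later, via Corollary~\ref{split-stabilizer}, to control automorphism actions. You also misread Corollary~\ref{cor:num-covered}: it says a single block $\wtB$ of $\wtG$ covers one or two blocks of $G$, whereas the set $\wcB$ of blocks of $\wtG$ covering $B$ has size $(q-1)_{\ell'}/2$ or $(q-1)_{\ell'}$, as the remark following that corollary explains. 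This is harmless since only $\Lin_{\ell'}(\wtG/G)$-stability of $\wcB$ is used, but it should be stated correctly. The counting step also needs care: when $\wtG_B\ne\wtG$ the constituents and restricted weights do not all land in $B$ — they split across $B$ and its $\wtG$-conjugate. What summing $|\Lin_{\ell'}(\wtG/G)_{\tpsi}|$ over $\Lin_{\ell'}(\wtG/G)$-orbits of $\IBr(\wcB)$ actually yields is $|\wtG:\wtG_B|\cdot|\IBr(B)|$, and similarly $|\wtG:\wtG_B|\cdot|\Alp(B)|$ on the weight side; equating these via the blockwise equivariant bijection and cancelling $|\wtG:\wtG_B|$ gives the claim. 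Finally, to equate the number of constituents of $\Res^{\wtN}_N\tvphi$ with $|\Lin_{\ell'}(\wtG/G)_{(\wtR,\tvphi)}|$ you need to observe that this stabilizer already has order at most $2$ (it injects into $\Lin(\wtN/N)_{\tvphi}$, whose order is $|\wtN:\wtN_\vph|\le|\wtG:G\Z(\wtG)|=2$), and that its unique possible involution $\widehat{-1}$ has $\ell'$-order since $\ell$ is odd — which is precisely what the last proposition of \S\ref{sec:weights} detects.
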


According to Proposition \ref{equiv-bijection}, if Assumption \ref{uni-assumption} holds, then 
there is a blockwise bijection between $\IBr(\wtG)$ and $\Alp(\wtG)$ equivariant under the actions of $\Lin_{\ell'}(\wtG/G)\rtimes \langle F_p \rangle$.

\subsection{}

To consider the local property,
we need some detailed information concerning normalizers of radical subgroups of $G$.
As before, we always abbreviate the superscript ``tw'' when we work in twisted groups.

\begin{rmk}\label{rmk:weyl-M}
Although the Weyl groups of $\wtG$ and $G$ are isomorphic,
the group $\wtW_0 \rtimes \wtW_1$ in \S\ref{subsect:Weyl-D0} affording the Weyl group of $\wtG$ are not in $G$.
We can give an explicit subgroup of $G$ affording the Weyl group of $G$.
Fix a basis (\ref{equ-basis}) of $V$.
Replace $\tw_{ij}$ and $\tw_i$ in \S\ref{subsect:Weyl-D0} by
\begin{align*}
\tw_{ij} &= \frac{1}{2}(\veps_i-\veps_j+\eta_i-\eta_j)(\veps_i-\veps_j-\eta_i+\eta_j)
(\veps_i+\eta_i)(\veps_i-\eta_i); \\
\tw_i &= z_V(\veps_i-\eta_i)\prod_{k=1}^n(\veps_k+\eta_k)(\veps_k-\eta_k).
\end{align*}
Then direct calculation show that $\pi(\tw_{ij}), \pi(\tw_i) \in [H,H]$,
thus $\tw_{ij}, \tw_i \in G$.
Set $\wtW= \set{ \tw_{ij},\tw_i \mid 1\leq i\neq j\leq n }$,
then $\wtW/\wtW\cap T \cong \ZZ_2\wr\fS_n$ is the Weyl group of $G$.
Now, replace $\tv_{m,\alp,\ga}$ and $\tdelta_{m,\alp,\ga}$ in \S\ref{subsect:key-lemma-weights}
by some elements in the group $\wtW$ above
affording the same elements in $\ZZ_2\wr\fS_n$ as $v_{m,\alp,\ga}$ and $\delta_{m,\alp,\ga}$ respectively.
Set $\wtV_{m,\alp,\ga}=\grp{\tv_{m,\alp,\ga},\tdelta_{m,\alp,\ga}}$ when $\veps=1$,
while set $\wtV_{m,\alp,\ga}=\grp{\tv_{m,\alp,\ga}}$ when $\veps=-1$.
Then $\wtV_{m,\alp,\ga}$ is a subgroup of $G$.
Note that the above process from $\wtG$ to $G$ is similar as the situation from general linear groups to special linear groups as in \cite{FLZ20a}.

Now, we consider the part $\wtM_{m,\alp,\ga}$ in $\wtN_{m,\alp,\ga}$ in \S\ref{subsect:key-lemma-weights}.
First, note that the subgroup $M_{m,\alp,\ga}^0$ in \S\ref{subsect:CN-SO}
satisfies that $\det(M_{m,\alp,\ga}^0)$ is a square
in $\F_{q^{e\ell^{\alp}}}$ or $\F_{q^{2e\ell^{\alp}}}$ according to $\veps=1$ or $-1$;
see \cite[\S3.A]{FLZ20a}.
In fact, $\det(M_{m,\alp,\ga}^0)=1$ except for a special case when $\ell=3$,
in which case, $\det(M_{m,\alp,\ga}^0)$ is a cyclic group of order $3$,
thus a square.
So $M_{m,\alp,\ga}$ is in fact in $[H,H]$.
Thus we can let $\wtM_{m,\alp,\ga}$ be the pre-image of $M_{m,\alp,\ga}$ in $G$.

In short, we can adjust the construction of $\wtN_{m,\alp,\ga}$
such that the parts $\wtV_{m,\alp,\ga}$ and  $\wtM_{m,\alp,\ga}$
are contained in $G$.
\end{rmk}

\subsection{Local property} 
Now we prove the local property.

\begin{prop}\label{local-condition}
For every weight $(R,\vph)$ of $G$ one has $\N_{\wtG D}(R)_\vph = \N_{\wtG}(R)_{\vph} \N_{GD}(R)_\vph$, where $D=\langle F_p\rangle$.
\end{prop}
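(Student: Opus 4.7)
The containment $\N_\wtG(R)_\vph \N_{GD}(R)_\vph \subseteq \N_{\wtG D}(R)_\vph$ is immediate, so I fix $xd \in \N_{\wtG D}(R)_\vph$ with $x\in\wtG$ and $d\in D$, and aim for a factorization $xd = yz$ with $y \in \N_\wtG(R)_\vph$ and $z \in \N_{GD}(R)_\vph$. If $x \in G\Z(\wtG)$, write $x = gc$ with $g\in G$ and $c\in \Z(\wtG)$; since $c$ is central, $c \in \N_\wtG(R)_\vph$ (trivially), and $gd = c^{-1}(xd) \in \N_{GD}(R)_\vph$, giving the required factorization. So from now on I assume $x \in \wtG\setminus G\Z(\wtG)$.

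Let $\phi_g$ denote conjugation by $g$. From $xd \in \N_{\wtG D}(R)$ I extract $\phi_d(R)=x^{-1}Rx$. Using the explicit classification of radical subgroups of $G$ from Section \ref{sec:weights} (twisted basic subgroups $R_{m,\alp,\ga,\bc}$), I verify that the field automorphism $d$ preserves the $G$-conjugacy class of every radical subgroup: the data $(m,\alp,\ga,\bc)$ classifying basic subgroups is $F_p$-invariant. Hence I can choose $g_1 \in G$ with $g_1 \phi_d(R) g_1^{-1} = R$, so that $g_1 d \in \N_{GD}(R)$. Setting $y := xg_1^{-1}$ gives $\phi_y(R)=\phi_x\phi_{g_1^{-1}}(R)=\phi_x\phi_d(R)=\phi_{xd}(R)=R$, hence $y \in \N_\wtG(R)$, and one has the factorization $xd = y(g_1 d)$.

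It remains to show that $y$ and $g_1 d$ both fix $\vph$. The identity $\vph^{xd}=\vph$ becomes $\vph^y = \vph^{(g_1d)^{-1}}$. Here $y\in\N_\wtG(R)\setminus \N_G(R)\Z(\wtG)$ (because $x\notin G\Z(\wtG)$ and $g_1 \in G$), while $(g_1d)^{-1}$ acts on $\vph$ as the composition of the nontrivial field automorphism $d^{-1}$ with an inner automorphism from $G$. Applying the analogue of Corollary \ref{split-stabilizer} for $\vph \in \dz(\N_G(R)/R)$---namely, that an element of $\N_\wtG(R)\setminus \N_G(R)\Z(\wtG)$ and a nontrivial field automorphism cannot act identically on $\vph$ unless both act trivially---I conclude $\vph^y = \vph^{(g_1 d)^{-1}} = \vph$, completing the proof.

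The main obstacle is precisely this split-stabilizer analogue for the weight character $\vph$. The plan to establish it is to exploit the explicit central-product decomposition of $\N_G(R)$ from \S\ref{subsect:CN-SO}--\S\ref{subsect:radical-D0} (refined as in Remark \ref{rmk:weyl-M} so that the $\wtM_{m,\alp,\ga}$- and $\wtV_{m,\alp,\ga}$-factors actually lie inside $G$): the quotient $\N_G(R)/R$ decomposes into factors that are essentially of general linear/unitary type $\wtC_{\Ga,\ga}$, the symplectic factors $\wtM_{\Ga,\ga}$, and wreath factors $\N_{\fS(\ell^{|\bc|})}(A_\bc)/A_\bc$, and both the diagonal (from $\wtG/G$) and the field ($D$) actions respect this decomposition. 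A componentwise argument---Corollary \ref{split-stabilizer} for the $\GL$-factors, handled as for type $\mathsf A$ in \cite{FLZ20a}, together with direct inspection for the wreath-product/Weyl-type factors---should then yield the desired conclusion.
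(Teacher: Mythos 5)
Your proposal follows essentially the same route as the paper. The paper first transfers to twisted radical subgroups (citing the method of \cite[Prop.~7.10]{FLZ20a}), which makes $R$ literally $D$-stable, and then reduces the equality to the split-stabilizer statement $(\N_{\wtG}(R)\rtimes D)_\vph = \N_{\wtG}(R)_\vph\rtimes D_\vph$; you instead work in the untwisted setting, factor an arbitrary $xd\in\N_{\wtG D}(R)_\vph$ as $y(g_1d)$ by hand (using that the $G$-conjugacy class of each twisted basic subgroup is $D$-stable, which indeed follows from the classification together with Lemma~\ref{lem:rad-spin}(3)), and reduce to the same split-stabilizer condition. The endpoint is identical; your reduction is just more manual. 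After that, both arguments depend on the same key input: both the paper and your plan establish the split via the explicit central-product decomposition of $\N_G(R)$ (GL/GU-type factors $\wtC_{\Ga,\ga}$, symplectic $\wtM$-factors, wreath factors) with the adjustment from Remark~\ref{rmk:weyl-M}, reducing to the type $\mathsf A$ computation of \cite[Prop.~7.18]{FLZ20a}; neither spells this computation out in full, so your sketch leaves unproven roughly what the paper delegates to that citation.

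One imprecision worth flagging: the element $(g_1d)^{-1}=d^{-1}g_1^{-1}$ is \emph{not} a field automorphism but a field automorphism composed with an inner automorphism coming from $G$ (and $g_1\notin\N_G(R)$ in general, since $g_1$ was chosen to carry $\phi_d(R)$ back to $R$). So the ``analogue of Corollary~\ref{split-stabilizer}'' you invoke must be phrased for elements of $\N_{GD}(R)$, not for field automorphisms alone --- which is exactly the form of the split $(\N_{\wtG}(R)\rtimes D)_\vph = \N_{\wtG}(R)_\vph\rtimes D_\vph$ that the paper reduces to after twisting. As stated, your invocation of the lemma is slightly too weak; this is easily repaired but should be corrected if you flesh out the componentwise argument.
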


\begin{proof}
As in \cite[Prop.~7.10]{FLZ20a} (whose proof is entirely similar with \cite[Prop.~5.13]{CS17}), we may transfer to twisted radical subgroups.
Using the notation for the radical subgroups and weights  in \S\ref{sec:weights},
we work on the twisted groups and we may let $\wtG=\wtbG^{\tv F}$ and  $G=\bG^{\tv F}$.
As before, ``tw'' will be dropped.
Then the basic radical subgroups are $D$-stable.
It suffices to show that if $(R,\vph)$ is a weight of $G$ with $R=\wtR\cap G$, where $\wtR$ is a direct product of basic subgroups, then
$(\N_{\wtG}(R)\rtimes D)_\vph = \N_{\wtG}(R)_{\vph} \rtimes D_\vph$.

Let $V=V_0\perp (V_1 \perp \cdots\perp V_1)\perp  \cdots \perp (V_u\perp\cdots\perp V_u)$~(where $V_i$ appears $t_i$ times for $i\ge 1$) be the decomposition corresponding to
$\wtR=\wtR_0\prod_{i=1}^u (\wtR_i)^{t_i}$, a central product over $\F_q^\ti\fe$, where $\wtR_0=\F_q^\ti\fe$ and $\wtR_i=\wtR_{m_i,\alp_i,\ga_i,\bc_i}$.
Then $\wtC=\C_{\wtG}(\wtR)\cong \wtC_0\prod_{i=1}^u(\wtC_i)^{t_i}$ is a central product over $\F_q^\ti\fe$ and $\wtN=\N_{\wtG}(\wtR)=\wtN_0\prod_{i=1}^u (\wtN_i)^{t_i}\rtimes \fS(t_i)$,	
where $\wtC_0=\wtN_0\cong D_0(V_0)$, $\wtC_i=\wtC_{m_i,\alp_i,\ga_i,\bc_i}$, 
$\wtN_i=\wtN_{m_i,\alp_i,\ga_i,\bc_i}$.
The products between $\wtN_i$'s are central products over $\F_q^\ti\fe$.
Let $\wtM_i=\wtM_{m_i,\alp_i,\ga_i}$, $\wtV_i=\wtV_{m_i,\alp_i,\ga_i}$, $\wtT_i=\N_{\fS(\ell^{|\bc_i|})}(A_{\bc_i})/A_{\bc_i}.$
Then $\wtN_i/\wtR_i=(\wtR_i\wtC_i\wtM_i/\wtR_i)\wtV_i\ti\wtT_i$,
where $\wtR_i\wtC_i\wtM_i/\wtR_i$ is a central product of $\wtR_i\wtC_i/\wtR_i$ and $\wtR_i\wtM_i/\wtR_i$ over $\grp{-\fe}$.
 Let  $\wtG_i=D_0(V_i)$ and $G_i=\Spin(V_i)$.
Then by Remark \ref{rmk:weyl-M},
$(\wtN_i\cap G_i)/(\wtR_i\cap G_i)=((\wtR_i\wtC_i\cap G_i)\wtM_i/(\wtR_i\cap G_i))\wtV_i\ti\wtT_i$.
As in \cite[\S 7]{FLZ20a}, we may replace $\fS(t_i)$ by some group $\cS_i\le G$ such that $\wtN=\wtN_0\prod_{i=1}^u (\wtN_i)^{t_i}\cS_i$.

Let  $\wtG'=\wtG_0\ti \prod_{i=1}^u \wtG_i\cS_i$.
Here $\cS_i$ permutes the components via the surjection $\cS_i\to\fS(t_i)$ with kernel $\grp{-\fe}$.
Let $\wtC'=\wtC_0\ti(\wtC_1)^{t_1}\ti\cdots\ti (\wtC_u)^{t_u}$, and let $\wtN''=\wtN_0\ti(\wtN_1)^{t_1}\ti\cdots\ti (\wtN_u)^{t_u}$ and
$\wtN'=\wtN_0\ti\prod_{i=1}^u \wtN_i\cS_i$.
Then $\wtC',\wtN'\le\wtG'$.
Let $$\wtZ'=\F_q^\ti\fe\ti(\F_q^\ti\fe)^{t_1}\ti\cdots\ti (\F_q^\ti\fe)^{t_u}\le \wtC'$$ and take 
$\widehat{\wtZ'}=\{ (z_1,z_2,\ldots)\in\wtZ'\mid z_1z_2\cdots=1 \}$.
Then $\wtC'/\widehat{\wtZ'}\cong \wtC$ and $\wtN'/\widehat{\wtZ'}\cong \wtN$.

Let $\wtG_i\to \F_q^\ti$ be the map defined in \S\ref{subsect:Clifford} denoted by $\eta_i$ and let $$\eta=\eta_0\ti(\eta_1)^{t_1}\ti\cdots\ti (\eta_u)^{t_u}:\wtG_0\ti(\wtG_1)^{t_1}\ti\cdots\ti (\wtG_u)^{t_u}\to \F_q^\ti.$$
Let $\cS$ be the direct product of the $\cS_i$'s.
Then $\C_{G}(R)=C'/\widehat{\wtZ'}$, $\N_G(R)=N'/\widehat{\wtZ'}$,
where $C'=\Ker(\eta|_{\wtC'})$ and $N'=\Ker(\eta|_{\wtN''})\cS$.
Since $D$ acts on $\wtG_i$, $\wtN_i$ and $\wtC_i$ for every $i$, we can define the actions of $D$ on $\wtG'$, $\wtC'$ and $\wtN'$. Note that $D$ acts trivially on $\cS$.
Let $\vph'$ be the lift of $\vph$ to $N'$.
By \S\ref{subsec:cen-prod},
it suffices to show that $(\wtN'\rtimes D)_{\vph'}=\wtN'_{\vph'}\rtimes D_{\vph'}$.

In this way, we are in the situation which is entirely analogous with \cite[\S 7]{FLZ20a} for type $\mathsf A$.
Our groups $\wtG'$, $\wtN'$, $\wtC'$, $D$ here play the same roles as $\wtG$, $\wtN$, $\wtC$, $E$ there.
Thus the proof is the same with \cite[Prop.~7.18]{FLZ20a}.
\end{proof}

\subsection{} 
Finally, we establish the inductive BAW condition for simple groups of type $\mathsf B$ and odd primes.

\begin{proof}[Proof of Theorem \ref{main-thm}]
Let $G$ be the universal $\ell'$-covering group of simple group $S=\Omega_{2n+1}(q)$ (with odd $q$).
Then one has $G=\Spin_{2n+1}(q)$ unless $n=q=3$.

We first consider $S=\Omega_7(3)$ so that $|G|=2^{10}\cdot 3^{10}\cdot 5\cdot 7 \cdot 13$.
Since $\ell\nmid q$ and $\ell$ is odd, one has $\ell=5,7,13$. Then the Sylow $\ell$-subgroups of $G$ are cyclic and the inductive BAW condition holds for all $\ell$-blocks of $G$ by \cite{KS16}.

Now we assume that $G=\Spin_{2n+1}(q)$.
Then we check the conditions of Theorem \ref{thm:criterion}.
As before, $\wtG$ is the corresponding special Clifford group and $D=\langle F_p \rangle$.
Then (1) is obvious.
By Proposition \ref{equiv-bijection}, if Assumption \ref{uni-assumption} holds, then 
there is a blockwise bijection between $\IBr(\wtG)$ and $\Alp(\wtG)$ equivariant under the actions of $\Lin_{\ell'}(\wtG/G)\rtimes D$, and thus condition (2) holds (we note that the condition (2.ii)  always holds; see Remark \ref{condition-J}).
By Corollary \ref{split-stabilizer}, condition (3) holds if the Assumption \ref{uni-assumption} is true.
Finally, (4) is checked in Proposition \ref{local-condition}.
This completes the proof.
\end{proof}


\end{document}